\documentclass[a4paper,USenglish,draft]{scrartcl}
 
\newif{\ifArxiv}
\Arxivtrue


\usepackage{authblk}
\usepackage[USenglish]{babel}

\usepackage{amssymb,amsmath}
\usepackage{tabularx}

\usepackage{microtype}
\usepackage{tikz}
\usetikzlibrary{shapes}
\usetikzlibrary{arrows,decorations.markings}
\usepackage{pgf}
\usetikzlibrary{automata}
\usetikzlibrary{chains,decorations.pathmorphing,positioning,fit}
\usetikzlibrary{decorations.shapes,calc,backgrounds}
\usetikzlibrary{decorations.text,matrix}
\usetikzlibrary{arrows,shapes.geometric,shapes.symbols,scopes}
\usetikzlibrary{decorations.markings}
\usetikzlibrary{positioning,arrows}

\newcommand{\Card}[1]{\left|\mathinner{#1}\right|}
\usepackage[hidelinks, final]{hyperref}
\usetikzlibrary{arrows, automata,chains,fit,shapes}
\usetikzlibrary{decorations.markings}

\setlength{\emergencystretch}{3em}

\clubpenalty = 1000
\widowpenalty = 1000 \displaywidowpenalty = 1000

\newcommand{\Oh}{\mathcal{O}}

\usepackage{amsfonts}
\usepackage{amssymb}
\usepackage{amsmath,xspace}
\usepackage[arrow, matrix, curve]{xy}
\usepackage{tikz}
\usetikzlibrary{shapes}
\usetikzlibrary{arrows,automata}

\usetikzlibrary{trees}
\usetikzlibrary{decorations}
\usetikzlibrary{snakes}
\usepackage{ellipsis}
\usepackage{ifthen}


\usepackage{amsthm}
\usepackage{prettyref}

\newcommand{\DSPACE}{\ensuremath{\mathsf{DSPACE}}\xspace} 
\newcommand{\PSPACE}{\ensuremath{\mathsf{PSPACE}}\xspace} 
 
\newcommand{\NSPACE}{\mathsf{NSPACE}}
\newcommand{\NTIME}{\mathsf{NTIME}}

\newcommand{\NP}{\ensuremath{\mathsf{NP}}\xspace} %

\renewcommand{\P}{\ensuremath{\mathsf{P}}\xspace}

\newcommand\ie{i.\,e., }
\newcommand\Wlog{W.\,l.\,o.\,g.\ }

\newcommand\eg{e.\,g.\xspace}

\newrefformat{thm}{Theorem~\ref{#1}}
\newrefformat{lem}{Lemma~\ref{#1}}
\newrefformat{dfn}{Definition~\ref{#1}}
\newrefformat{def}{Definition~\ref{#1}}
\newrefformat{cor}{Corollary~\ref{#1}} 
\newrefformat{prop}{Proposition~\ref{#1}}
\newrefformat{sec}{Section~\ref{#1}}
\newrefformat{kap}{Chapter~\ref{#1}}
\newrefformat{bsp}{Example~\ref{#1}}
\newrefformat{rem}{Remark~\ref{#1}}
\newrefformat{fig}{Figure~\ref{#1}}
\newrefformat{eq}{(\ref{#1})}
\newrefformat{ex}{Example~\ref{#1}}
\newrefformat{tab}{Table~\ref{#1}}

%
\newtheorem{theorem}{Theorem}

\newtheorem{lemma}[theorem]{Lemma}

\newtheorem{proposition}[theorem]{Proposition}
\newtheorem{corollary}[theorem]{Corollary}
\theoremstyle{definition}
\newtheorem{definition}[theorem]{Definition}
%
\newtheorem{remark}[theorem]{Remark}


\newcommand{\ov}[1]{\overline{#1}}

\newcommand{\vecdel}{\vec\delta}

\newcommand{\abs}[1]{\left|\mathinner{#1}\right|}
\newcommand{\Abs}[1]{\left\Vert\mathinner{#1}\right\Vert}


\newcommand{\set}[2]{\left\{\, \mathinner{#1}\vphantom{#2}\: \left|\: \vphantom{#1}\mathinner{#2} \right.\,\right\}}
\newcommand{\oneset}[1]{\left\{\, \mathinner{#1} \,\right\}}

\newcommand{\smallset}[1]{\left\{\mathinner{#1}\right\}}
\newcommand{\oi}[1]{{#1}^{-1}}


\newcommand{\sse}{\subseteq}
\newcommand{\es}{\emptyset}



\newcommand{\N}{\mathbb{N}}
\newcommand{\Z}{\mathbb{Z}}

\newcommand{\R}{\mathbb{R}}




\newcommand{\cG}{\mathcal{G}}

\newcommand{\cR}{\mathcal{R}}

\newcommand{\cvF}{\mathcal{V}}

\newcommand{\dist}{d}

\newcommand{\bigR}{2\cdot\Theta\cdot\Xi}


\renewcommand{\phi}{\varphi}


\newcommand{\WP}[1]{\mathop{\mathrm{WP}}({#1})}



\newcommand{\Sig}{\Sigma}

\newcommand{\GG}{\Gamma}

\newcommand{\alp}{\alpha}
\newcommand{\bet}{\beta}

\newcommand{\del}{\delta}

\newcommand{\bs}{\backslash}


\newcommand\Copt{\cC_{\mathrm{opt}}}


\newcommand\RA[1]{\underset{#1}{\Longrightarrow}}


\newcommand{\diam}{\mathrm{diam}}

\newcommand{\Comp}[1]{\overline{#1}}

\newcommand{\Stab}{\mathrm{Stab}}
\newcommand{\cC}{\mathcal{C}}

\newcommand{\ssnq}{\subsetneqq}

\newcommand{\sm}{\setminus}

\newcommand{\tto}{\overset{\sim}{\longrightarrow}}

\newcommand{\wt}[1]{\widetilde{#1}}

\newcommand{\Gram}{\mathbb{G}}



%


\newenvironment{aw}{\noindent\color{red} AW }{}
\newenvironment{awn}{\noindent\color{violet} AW new: }{}
\newenvironment{gs}{\noindent\color{blue} GS }{}

\renewcommand{\setminus}{\smallsetminus}

\renewcommand{\kappa}{k}


\bibliographystyle{plainurl}

\title{The isomorphism problem for finite extensions of free groups is in \PSPACE}

%

\begin{document}

\author[1]{G\'eraud S\'enizergues}
\author[2]{Armin Wei\ss}
\affil[1]{
\normalsize	LABRI, Bordeaux, France\\ 
	\texttt{geraud.senizergues@u-bordeaux.fr}}
\affil[2]{ Universit{\"a}t Stuttgart, Germany\\
	\texttt{armin.weiss@fmi.uni-stuttgart.de}}


\maketitle

\vspace{-4mm}
\begin{abstract}
	\small
We present an algorithm for the following problem: given a context-free grammar for the word problem of a virtually free group $G$, compute 
a finite graph of groups ${\cal G}$ with finite vertex groups and fundamental group $G$.
Our algorithm is non-deterministic and runs in doubly exponential time.
It follows that	the isomorphism problem of context-free groups can be solved 
in doubly exponential space. 
%
Moreover, if, instead of a grammar, a finite extension of a free group is given as input, the construction of the graph of groups is in \NP and, consequently, the isomorphism problem in \PSPACE.

\vspace{-2mm}
\paragraph*{Keywords} virtually free groups, context-free groups, isomorphism problem, structure tree, graph of groups
 \end{abstract}

\vspace{-2mm}
\ifArxiv
\tableofcontents
\fi

\section{Introduction}

The study of algorithmic problems in group theory was initiated by
Dehn \cite{dehn11} when he introduced the word and the
isomorphism problem. The word problem asks whether some word over a (finite) set of
generators  represents the identity of the group. It also can be viewed
as a formal language, namely $\phi^{-1}(1) \sse \Sigma^*$ for some surjective homomorphism $\phi: \Sigma^* \to G$. The
isomorphism problem receives two finite presentations as input, the question is whether the groups they define are isomorphic. Although both
these problems are undecidable in general \cite{nov55,boone59}, there
are many classes of groups where at least the word problem can be
decided efficiently.

One of these classes are the finitely generated \emph{virtually free} groups (groups with a free
subgroup of finite index). It is easy to see that the word problem of a
finitely generated virtually free group can be solved in linear
time. Indeed, it forms a deterministic context-free
language. A seminal paper by Muller and Schupp \cite{ms83} shows the
converse: every group with a context-free word problem is virtually
free. Since then, also a wide range of other characterizations of
virtually free groups have emerged~-- for a survey we refer to
\cite{Antolin11,DiekertW17crm}.

The isomorphism problem of virtually free groups is also decidable as Krsti\'c showed in \cite{krstic89} (indeed, later Dahmani and Guirardel showed that the isomorphism problem for all hyperbolic groups is decidable \cite{DahmaniG11}). Here the input consists of two arbitrary finite presentations with the promise that both define virtually free groups. Unfortunately, the approach in \cite{krstic89} does not give any bound on the complexity. 
For the special case where the input is given as finite extension of free groups or as context-free grammars for the word problems, S\'enizergues \cite{sen93icalp,sen96dimacs} showed that the isomorphism problem is primitive recursive. 
\renewcommand{\theenumi}{(\Alph{enumi})}
\renewcommand{\labelenumi}{(\Alph{enumi})}
\vspace{-3mm}
\paragraph*{Contribution.} We improve the complexity for the isomorphism problem by showing: 
\begin{enumerate}
	\item Given a context-free grammar for the word problem of a context-free group $G$, a graph of groups for $G$ with finite vertex groups can be computed in $\NTIME(2^{2^{\Oh(n^2)}})$  (Theorem \ref{thm:compute_gog_cfg}).\label{result1}
	\item Given a virtually free presentation for $G$, a graph of groups for $G$ with finite vertex groups can be computed in $\NP$ (Theorem \ref{thm:compute_gog_NP_outer}).\label{result2} 
	\item The isomorphism problem for context-free groups given as grammars is in $\DSPACE(2^{2^{\Oh(n^2)}})$ (Theorem \ref{thm:iso_NEXP2_cf}).\label{result3}
	\item The isomorphism problem for virtually free groups given as virtually free presentations is in $\PSPACE$ (Theorem \ref{thm:iso_PSPACE_outer}).\label{result4}
\end{enumerate}
 Here, a \emph{virtually free presentation} for $G$ consists of a free group $F$ plus a set of representatives $S$ for $F\bs G$ together with relations describing pairwise multiplications of elements from $F$ and $S$. Typical examples of virtually free presentations are finite extensions of free groups.
For non-deterministic function problems we use the convention, that every accepting computation must yield a correct result; but the results of different accepting computations might differ\footnote{Thus, \ref{result2} means that the graph of groups can be computed in $\mathsf{NPMV}$ in the sense of \cite{Selman94}. More precisely, it can be rephrased as follows: the multi-valued function mapping a 
virtually free presentation for $G$ into a pair $({\cal G},\varphi)$, where 
${\cal G}$ is a graph of groups and $\varphi: \pi_1({\cal G}) \rightarrow G$ is an isomorphism of polynomial size, is everywhere defined and belongs to the class $\mathsf{FNP}$ as defined in \cite{pap94}.}.

\ifArxiv
The results \ref{result3} and \ref{result4} can seen be to follow from
\ref{result1} and \ref{result2} by using parts of Krsti\'c's algorithm
for the isomorphism problem. Here, we present another approach based on
so-called \emph{slide moves} on the graph of groups. Indeed, we conclude from Forester's work on deformation spaces
\cite{Forester02} that two graphs of groups with finite
vertex groups and isomorphic fundamental groups can be transformed into each other by a sequence of slide moves (\prettyref{prop:forester}).
\else
The results \ref{result3} and \ref{result4} can seen be to follow from
\ref{result1} and \ref{result2} rather easily. Indeed, we conclude from Forester's work on deformation spaces
\cite{Forester02} that two graphs of groups with finite
vertex groups and isomorphic fundamental groups can be transformed into each other by a sequence of slide moves (\prettyref{prop:forester}).
\fi

Our approach for proving \ref{result1} and \ref{result2} is as follows: in both cases the algorithm simply guesses a graph of groups together with a map and afterwards it verifies deterministically whether the map is indeed an isomorphism. The latter can be done using standard results from formal language theory. The difficult part is to show the existence of a ``small'' graph of groups and isomorphism (within the bounds of \ref{result1} and \ref{result2}).

For this, we introduce the \emph{structure tree} theory by Dicks and Dunwoody \cite{DicksD89} following a slightly different approach by Diekert and Wei\ss\ \cite{DiekertW13} based on the \emph{optimal cuts} of the Cayley graph  (\prettyref{sec:cuts}). The optimal cuts can be seen as the edge set of some tree on which the group $G$ acts. By Bass-Serre theory, this yields the graph of groups we are aiming for. Vertices in the graph of groups are defined in terms of equivalence classes of optimal cuts. The key in the proof is to bound the size of the equivalence classes.  
Using Muller and Schupp's \cite{ms83} notion of $k$-triangulability, Sénizergues \cite{sen96dimacs} proved bounds on the size of finite subgroups and on the number of edges in a reduced graph of groups for a context-free group, from which we derive our bounds. 

%

\vspace{-3mm}
\paragraph*{Outline.}
After fixing our notation, we recall basic facts from Bass-Serre
theory and the results from \cite{sen96dimacs} and give a short
review on structure trees based on \cite{DiekertW13}. 
\prettyref{sec:bounds}, develops bounds on the size of the
vertices (= equivalence classes of cuts) of the structure tree. After that, we introduce virtually free presentations formally
and we derive stronger bounds for this case in
\prettyref{sec:outer}. \prettyref{sec:main} completes the proofs of \ref{result1}
and \ref{result2}.  Finally, in \prettyref{sec:iso} we derive \ref{result3} and
\ref{result4} and we conclude with some open
questions.
%


\renewcommand{\theenumi}{(\roman{enumi})}
\renewcommand{\labelenumi}{(\roman{enumi})}
\vspace{-1mm}
\section{Preliminaries}
\label{sec:preliminaries}

\vspace{-1mm}
\ifArxiv
\paragraph*{Sets.} In order to distinguish it from quotient groups, we write $A\setminus B$ for the difference of sets $A$ and $B$. Moreover, the cardinality of a set $A$ is denoted by $\abs{A}$.
\vspace{-3mm}
\fi

\paragraph*{Complexity.} \ifArxiv
We use standard $\Oh$
-notation for functions from $\N$ to 
non-negative reals $\R^{\geq 0}$. 
\fi
We use the following convention for non-deterministic function problems: each accepting computation path must yield a correct answer~-- though different accepting paths can compute different correct answers. We use this convention to define the classes \NP (non-deterministic polynomial time) and $\NTIME(f(n))$ (non-deterministic time bounded by $f(n)$). Otherwise, we use standard complexity classes \P (deterministic polynomial time), \PSPACE (polynomial space) and $\DSPACE(f(n))$ (deterministic space bounded by $f(n)$) for both decision and function problems.

\vspace{-3mm}
\paragraph*{Words.} An \emph{alphabet} is a (finite) set $\Sig$; an element $a \in \Sig$ is called a  \emph{letter}. The set $\Sig^n$
forms the set of \emph{words} of length $n$. The length of $w\in \Sig^n$
is denoted by $\abs w$. The set of all words is denoted by $\Sig^*$. It is the free monoid over $\Sig$~-- its neutral element is the empty word $1$.
\ifArxiv If we can write $w = uxv$, then we call $u$ a \emph{prefix}, $x$ a \emph{factor} and $v$ a \emph{suffix} of $w$. \fi

\vspace{-3mm}
\paragraph*{Context-free grammars.} 
We use standard notation for context-free grammars:
a context-free grammar, is a tuple $\Gram =(V, \Sigma, P,S)$ with variables $V$, terminals $\Sigma$, productions rules $P \sse V \times (V \cup \Sigma)^*$, and a start symbol $S$. We denote its size by $\Abs{\Gram } = \abs{V} + \abs{\Sigma} + \sum_{S \to \alpha \in P} \abs{\alpha}$.
It is in \emph{Chomsky normal form} if all production are of the form $S \to 1$, $A \to a$ or $A \to BC$ with $A,B,C \in V$, $a \in \Sigma$. For further definitions on context-free grammars
, we refer to \cite{HU}.

\vspace{-3mm}
\paragraph*{Groups.} 
%
We consider groups $G$ together with a finite subset of monoid generators $\Sigma$.
Every word $w \in \Sig^*$ is simultaneously viewed as the corresponding group element in $G$ under the 
canonical projection $\pi:\Sig^* \to G$. Whenever it is not clear whether equality is as group elements or words, we write $w=_{G}w'$ as a shorthand of 
$\pi(w)=\pi(w')$. \ifArxiv Thus, $w=_{G}w'$ means that $w$ and $w'$ represent the same element in the group $G$. \fi
The \emph{word problem} of $G$ is the formal language $\WP{G} = \pi^{-1}(1)$.

 A \emph{symmetric} set of generators is a set with the involution $a \mapsto \ov a =_G \oi a$ (\ie  $\ov{\ov a} = a$). 
Let $w \in \Sig^*$ and $\Sig$ be symmetric.
We say that $w$ is \emph{freely reduced} if there is no factor $a\ov a$ for any letter $a \in \Sig$.
Given an arbitrary set of generators $X$, the free group over $X$ is denoted by $F(X)$. It is defined as $(X \cup \ov X)^*$ modulo the defining relations $x \ov x = 1$ for $x \in X \cup \ov X$.

\vspace{-3mm}
\paragraph*{Graphs.}\label{sub:graphs}

A  \emph{(undirected) graph} $\Gamma = (V,E,s,t, \ov{\,\cdot\,})$ is given by the following data: 
A set of vertices $V= V(\Gamma)$, a set of edges $E=E(\GG)$ together with two \emph{incidence}
maps $s:E \to V$ and $t:E \to V$ and an involution $E \to E$, $e \mapsto \ov e$
without fixed points such that $s(e) = t(\ov e)$. \ifArxiv The vertex $s(e)$ is the \emph{source} of $e$ and $t(e)$ is the \emph{target} of $e$. \fi
The \emph{degree} of $u$ is the number of incident edges. 
\ifArxiv A graph is finite, if it has finitely many vertices and edges. \fi 
An \emph{undirected edge} is the set $\smallset{e, \ov e}$. For the cardinality of sets of edges we usually count the number of undirected edges.
\ifArxiv A \emph{directed graph} is a graph without the involution. \fi

A (finite) \emph{path} from $v_0$ to $v_n$ is a pair of sequences $((v_0,\ldots,v_n), (e_1,\ldots,e_n))$ such that $s(e_i) =v_{i-1}$ and $t(e_i)=v_{i}$ for all $1 \leq i \leq n$.  Similarly, a \emph{bi-infinite path} is a pair of sequences $((v_i)_{i \in \Z}, (e_i)_{i \in \Z})$ such that $s(e_i) =v_{i-1}$ and $t(e_i)=v_{i}$ for all $i \in \Z$. A path is \emph{simple} if 
the vertices are pairwise distinct. It is \emph{closed} if $v_0=v_n$.
Depending on the situation we also denote paths simply by the sequence of edges or the sequence of vertices. \ifArxiv Given two paths $\beta$ and $\gamma$, we denote the concatenation by $\beta\gamma$. \fi
The \emph{distance} $d(u,v)$ between vertices $u$ and $v$ is defined as the length (i.\,e., the number of edges) of a shortest path connecting $u$ and $v$. \ifArxiv We let 
$d(u,v) = \infty$ if there is no such path. \fi
\ifArxiv A path $v_0,\dots,v_n$ is called \emph{geodesic} if $n= d(v_0,v_n)$. \fi 
For $A,B\sse V(\Gamma)$ the distance is defined as $d(A,B) = \min\set{d(u,v)}{u\in A,v\in B}$. 
\ifArxiv An undirected graph $\Gamma$ is called \emph{connected} if $d(u,v) < \infty$ for all vertices $u$ and $v$. 
A tree is a connected graph which does not contain any non-trivial simple closed path. \fi

For $S\sse V(\GG)$ we define $\GG - S$ to be the induced subgraph
of $\GG$ with vertices $V(\GG) \setminus S$. For $C \sse V(\Gamma)$, we write $\Comp C$ for
the complement of $C$, i.\,e., $\Comp C = V(\GG) \sm C$.  We call $C$
connected, if the induced subgraph is connected.
A group $G$ acts on a graph $\Gamma$, if it acts on both $V(\Gamma)$ and $E(\Gamma)$ and the actions preserve the incidences.

\vspace{-3mm}
\paragraph*{Cayley graphs.} Let $G$ be a group and $\Sigma$ a symmetric 
generating set of $G$. (If $\Sigma$ is not symmetric, we simply add a set of formal inverses $\ov \Sigma$.)
 The \emph{Cayley graph} $\Gamma= \Gamma_\Sig(G)$ of $G$ (with respect to $\Sigma$) is defined by $V(\Gamma) = G$ and 
$E(\Gamma) = G \times \Sigma$, with the incidence functions 
$s(g,a) = g$, $t(g,a) = ga$, and involution $\ov{(g,a)} = (ga,a^{-1})$. 
\ifArxiv The Cayley graph is connected because $\Sigma$ generates $G$. 
 The \emph{directed Cayley graph} is defined analogously without requiring that $\Sigma$ is symmetric. \fi
 \ifArxiv\else
  For $r \in \N$ let $B(r):=\set{u\in V(\Gamma)}{\dist(u,1)\leq r}$ denote the ball with 
radius $r$ around the identity. \fi

\vspace{-3mm}
\paragraph*{Cuts.} \ifArxiv For $v\in V(\Gamma)$ let $B_v(r):=\set{u\in V(\Gamma)}{\dist(u,v)\leq r}$ denote the ball with 
radius $r$ and center $v$. If $\Gamma$ is a Cayley graph,  we also write $B(r)$ for the 
ball with radius $r$ and center $1$. \fi
For a subset $C\sse V(\GG)$ we define the \emph{edge} and \emph{vertex boundaries} of $C$ as follows: 
%
%
\begin{align*}
\vecdel C & =\set{e\in E(\Gamma)}{s(e)\in C, t(e)\in \Comp{C} }\text{ = directed edge boundary},\\
\ifArxiv
\delta C & =\set{e\in E(\Gamma)}{s(e)\in C, t(e)\in \Comp{C} \text{ or } t(e)\in C, s(e)\in \Comp{C}}\text{ = edge boundary},\\ 
\fi
\partial C&= \bigl\{\, s(e) \,\,\big\vert\,\,  e\in \vecdel C \,\bigr\}
\text{ = inner vertex boundary},\\
\beta C&= \bigl\{\, s(e) \,\,\big\vert\,\,  e\in \vecdel C \text{ or } \ov e\in \vecdel C \,\bigr\} = \partial C \cup \partial \ov C
\text{ = vertex boundary}.
\end{align*}

\begin{definition}\label{def:cut}
	A \emph{cut} is a subset $C \subseteq V(\GG)$  such that $C$ and $\Comp{C}$ are both non-empty and connected and \ifArxiv$\delta C$\else$\vecdel C$ \fi is finite.
	The \emph{weight} of a cut is $\vert \vecdel C\vert$\ifArxiv (so the weight of a cut is the number of undirected edges in $\delta C$)\fi. If $\vert\vecdel C\vert\leq k$, we call  $C$ a \emph{$k$-cut}. 
w\end{definition}

\vspace{-2mm}
\subsection{Bass-Serre theory}\label{sec:bass_serre}\label{sec:GX}
We give a brief summary of the basic definitions and results of Bass-Serre theory \cite{serre80}. 

\begin{definition}[Graph of Groups]
	Let $Y = (V(Y),E(Y))$ be a connected graph. 
	A \emph{graph of groups $\cG$ over $Y$} is given by the following data:
	\begin{enumerate}
		\item For each vertex $P \in V(Y)$ there is a \emph{vertex group} 
		$G_P$.
		\item For each edge $y \in E(Y) $ there is an \emph{edge group}
		$G_y $ such that $G_y = G_{\ov y}$.
		\item For each edge $y \in E(Y) $ there is
		an injective homomorphism from $G_y$ to $G_{s(y)}$, which is denoted by $a \mapsto a^y$.
		The image of $G_y$ in $G_{s(y)}$ is denoted by $G_y^y$.
	\end{enumerate}
	Since we have $G_y = G_{\ov y}$, there is also a homomorphism $G_y \to G_{t(y)}$
	with $a \mapsto a^{\ov y}$. The image of $G_y$ in $G_{t(y)}$ is denoted by $G_y^{\ov y}$.
	 A graph of groups is called \emph{reduced} if $G_y^y \neq G_{s(y)}$ whenever $s(y) \neq t(y)$ for $y \in E(Y)$. Throughout we assume that all graphs of groups are connected and finite (\ie $Y$ is a connected, finite graph). 
\end{definition}

\vspace{-5mm}
\paragraph*{Fundamental group of a graph of groups.}
We begin with the group $F(\cG)$. It is defined as the free product of the free group 
$F({E(Y)})$ and the 
groups $G_P$ for $P \in V(Y)$ modulo the set of defining relations
$\set{\ov{y}a^yy=a^{\ov{y}}}{a \in G_y, \, y \in E(Y)}$. 
As a set of (monoid) generators we fix the disjoint union 
$\Delta= \biguplus_{P\in V(Y)} (G_P \setminus \smallset{1}) \cup E(Y)$
throughout. Now, we have
\[F(\cG) = F(\Delta)/\!\set{ gh=[gh],\, \ov{y}a^yy=a^{\ov{y}}}{ P\in V(Y),\, g,h \in G_P;\, y\in E(Y), \, a\in G_y }\!,\]
where $[gh]$ denotes the element obtained by multiplying $g $ and $h$ in $G_P$.

%
%

%
For $P\in V(Y) $ we define a subgroup $\pi_1(\cG, P)$ of $F(\cG)$ by 
the elements 
$g_0y_1\cdots g_{n-1}y_ng_{n}\in F(\cG)$, such that
	$y_1\cdots y_n$ is a closed path from $P$ to $P$ and
	 $g_i\in G_{s(y_{i+1})}$  for $0\leq i < n$ and $ g_{n} \in G_{P}$.
The group $\pi_1(\cG,P)	$ is called the \emph{fundamental group} of $\cG$ with respect to the base point $P$.
%
%
%
%
%
%
Since we assumed $Y$ to be connected, there exists a 
spanning tree $T= (V(Y),E(T))$ of $Y$. The \emph{fundamental group} of $\cG $ with respect to $T$ is defined as 
\[\pi_1(\cG,T)= F(\cG)/\set{y=1}{y\in T}.\]
%

\begin{proposition}[\cite{serre80}]\label{prop:twofunds}
	The canonical homomorphism $\psi$ from the subgroup $\pi_1(\cG, P)$ 
	of $F(\cG)$ to the quotient group $\pi_1(\cG,T)$ is an isomorphism. In particular, the two definitions of the fundamental group are independent of the choice of the base point or the spanning tree.
\end{proposition}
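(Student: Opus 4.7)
The plan is to prove surjectivity and injectivity of $\psi$ by constructing an explicit retraction $r: F(\cG) \to \pi_1(\cG,P)$ that factors through $\pi_1(\cG,T)$. For each vertex $Q \in V(Y)$, I fix the unique reduced path $\tau_Q$ from $P$ to $Q$ inside the spanning tree $T$; in particular $\tau_P$ is the empty word. On generators I set $r(g) = \tau_Q\, g\, \tau_Q^{-1}$ for $g \in G_Q \setminus \{1\}$, and $r(y) = \tau_{s(y)}\, y\, \tau_{t(y)}^{-1}$ for $y \in E(Y)$. Every $r$-image is a closed path at $P$ built from admissible building blocks, so lies in $\pi_1(\cG,P)$. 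Extending multiplicatively and checking the defining relations gives a homomorphism from $F(\Delta)$ to $\pi_1(\cG,P)$.

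The key verification is compatibility with the two kinds of relations of $F(\cG)$. For the vertex-group relations $gh = [gh]$ inside some $G_Q$, conjugation by $\tau_Q$ is a homomorphism $G_Q \to \pi_1(\cG,P)$, so the relations are preserved. For the edge relation $\overline{y}\,a^y y = a^{\overline{y}}$, a direct computation gives
\[
r(\overline{y})\, r(a^y)\, r(y) \;=\; \tau_{t(y)}\,\overline{y}\,\tau_{s(y)}^{-1}\cdot \tau_{s(y)}\, a^y\, \tau_{s(y)}^{-1}\cdot \tau_{s(y)}\, y\, \tau_{t(y)}^{-1} \;=\; \tau_{t(y)}\,(\overline{y}\, a^y\, y)\, \tau_{t(y)}^{-1} \;=\; \tau_{t(y)}\, a^{\overline{y}}\, \tau_{t(y)}^{-1} \;=\; r(a^{\overline{y}}).
\]
Hence $r: F(\cG) \to \pi_1(\cG,P)$ is a well-defined homomorphism. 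Moreover, for every $y \in E(T)$ the loop $\tau_{s(y)} y \tau_{t(y)}^{-1}$ is a closed tree-path, hence trivial in $F(\cG)$ (as a product of edges and their formal inverses along a tree), so $r(y) = 1$. Therefore $r$ factors through the quotient $\pi_1(\cG,T)$, yielding $\overline{r}: \pi_1(\cG,T) \to \pi_1(\cG,P)$.

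It remains to show $\overline{r}$ and $\psi$ are mutually inverse. On $\pi_1(\cG,P)$ the map $r$ is the identity: a typical generator $g_0 y_1 g_1 \cdots y_n g_n$ maps under $r$ to a telescoping product where consecutive $\tau_{t(y_i)}^{-1} \tau_{s(y_{i+1})}$ cancel (because $t(y_i) = s(y_{i+1})$) and the outermost $\tau_P = 1$, giving back the original element. This shows $\overline{r} \circ \psi = \mathrm{id}$, so $\psi$ is injective. For surjectivity, let $w \in F(\cG)$ represent a class in $\pi_1(\cG,T)$; apply $\psi \circ \overline{r}$ to it. Since every $\tau_Q$ consists of tree edges, it becomes trivial in $\pi_1(\cG,T)$, so $\psi(\overline{r}(w)) = w$ in $\pi_1(\cG,T)$. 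Hence $\psi \circ \overline{r} = \mathrm{id}$, completing the proof that $\psi$ is an isomorphism; independence from $P$ and $T$ then follows because both sides are, up to this canonical isomorphism, identified with the same abstract group.

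I expect the main obstacle to be purely bookkeeping: verifying well-definedness of $r$ on the HNN-style relation, as displayed above, and correctly handling the case $s(y) = t(y)$ of a loop edge, where $\tau_{s(y)} = \tau_{t(y)}$ and the computation still goes through unchanged. No deeper combinatorial ingredient (such as the normal form theorem for reduced words in $\pi_1(\cG,P)$) is needed for this proposition, although that theorem provides an alternative injectivity argument.
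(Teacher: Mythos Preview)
The paper does not supply its own proof of this proposition; it simply cites Serre~\cite{serre80}. Your argument is correct and is essentially the classical proof found there: one constructs the section $r$ via conjugation by the tree geodesics $\tau_Q$, checks compatibility with the defining relations of $F(\cG)$, observes that $r$ kills tree edges and hence descends to $\pi_1(\cG,T)$, and then verifies that $\psi$ and $\overline{r}$ are mutually inverse by telescoping. One small wording point: when you say the loop $\tau_{s(y)}\,y\,\tau_{t(y)}^{-1}$ is ``trivial in $F(\cG)$ as a product of edges and their formal inverses along a tree'', the precise reason is that this word lies in $F(E(T))$ and, being a closed path in a tree, freely reduces to the empty word; you might make that explicit.
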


A word $w \in \Delta^*$ is called \emph{reduced} if it does not contain a factor $gh$ with $g,h \in G_P$ for some $P$ or a factor $\ov{y}a^yy$ with $y\in E(Y)$, $a\in G_y$. 
\begin{lemma}[\,{\!\cite[Thm.\ I.11]{serre80}}]\label{lem:reduced_word}
A reduced word in $\pi_1(\cG,P)$ represents the trivial element if and only if it is the empty word.
\end{lemma}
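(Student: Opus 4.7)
The plan is to prove the non-trivial direction -- every non-empty reduced word is non-trivial -- by induction on the number of undirected edges of the underlying graph $Y$, using \prref{prop:twofunds} so that I may work throughout with $\pi_1(\cG,T)$ (for some chosen spanning tree $T$) rather than with $\pi_1(\cG,P)$ as a subgroup of $F(\cG)$. The ``if'' direction is immediate from the definitions. For the base case, $Y$ consists of a single vertex $P$ and no edges, so $\pi_1(\cG,T)=G_P$ and a non-empty reduced word is a single generator from $G_P\setminus\smallset{1}$, which is non-trivial.

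For the inductive step I would distinguish the two standard cases. First, if there exists an edge $y\in E(Y)\setminus E(T)$, I would delete the undirected edge $\smallset{y,\ov y}$ to obtain a smaller graph of groups $\cG'$ with the same spanning tree $T$. The presentation of $\pi_1(\cG,T)$ recalled in \prref{sec:bass_serre} shows directly that $\pi_1(\cG,T)$ is the HNN extension of $\pi_1(\cG',T)$ with stable letter $y$ and associated subgroups $G_y^y\leq G_{s(y)}$ and $G_y^{\ov y}\leq G_{t(y)}$, viewed inside $\pi_1(\cG',T)$ via the canonical embedding coming from the induction. A reduced word in $\pi_1(\cG,T)$ then translates into a Britton-reduced sequence, so Britton's lemma combined with the inductive hypothesis applied to $\pi_1(\cG',T)$ yields non-triviality. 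Otherwise $Y=T$ is a tree; then I would pick a leaf $Q$ of $T$ together with the unique edge $y\in E(T)$ incident to $Q$, delete $Q$ and $\smallset{y,\ov y}$ to form $\cG'$ with spanning tree $T'$, and observe that $\pi_1(\cG,T)\cong \pi_1(\cG',T')*_{G_y}G_Q$, the amalgamation taking place along the two embeddings $a\mapsto a^y$ and $a\mapsto a^{\ov y}$. Reduced words correspond precisely to normal forms for this amalgamated product, so the classical normal form theorem together with the induction hypothesis closes this case as well.

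The main obstacle I expect is the careful translation between the intrinsic notion of reducedness stated in the lemma (no factor $gh$ with $g,h$ in the same vertex group, and no factor $\ov y a^y y$) and the normal form hypotheses of Britton's lemma and of the normal form theorem for amalgamated free products. In the HNN case, the prohibition of $\ov y a^y y$ factors is exactly the no-pinch condition on $y$, and the prohibition of $gh$ factors ensures that between consecutive occurrences of $y^{\pm 1}$ one has a well-defined syllable in $\pi_1(\cG',T)$ whose membership in the associated subgroup can be ruled out by induction. In the amalgam case, one checks that the syllable decomposition alternates between $\pi_1(\cG',T')$ and $G_Q$, each syllable lying outside the amalgamating copy of $G_y$. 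These verifications are mechanical but easy to slip up on, so I would allocate most of the writing effort to them and treat the algebraic isomorphisms with $\cG'$ as routine presentation manipulations.
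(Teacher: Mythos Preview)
The paper does not prove this lemma at all; it simply quotes it from Serre~\cite[Thm.~I.11]{serre80}. Serre's own argument is quite different from yours: he constructs the Bass--Serre tree $\widetilde X$ on which $\pi_1(\cG,P)$ acts, and shows directly that a non-empty reduced word moves the base vertex of $\widetilde X$, hence cannot represent the identity. So your inductive approach via HNN extensions and amalgams is a genuinely different route~-- the classical combinatorial one~-- and it has the advantage of being self-contained once Britton's lemma and the amalgam normal-form theorem are taken for granted, whereas Serre's proof requires setting up the tree first but then yields the action on the tree as a byproduct.

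Your outline is essentially sound, but one step is more delicate than you indicate. In the HNN case you write that ``membership in the associated subgroup can be ruled out by induction''. The induction hypothesis, however, only tells you that a non-empty reduced word in $\pi_1(\cG',P')$ is \emph{non-trivial}; it does not directly tell you it lies outside $G_y^y$. What you actually need is the stronger consequence: if a reduced word $w_i$ in $\pi_1(\cG',s(y))$ represents some $g\in G_{s(y)}$, then $w_i$ is literally the single letter $g$ (or empty, with $g=1$). This does follow from the induction hypothesis, but via an extra argument: left-multiply by $g^{-1}\in G_{s(y)}$, perform at most one vertex-group multiplication to restore reducedness, and observe that the result is a reduced word representing $1$, hence empty by induction. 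Only then can you conclude that a pinch $\ov y\,w_i\,y$ with $w_i\in G_y^y$ forces $w_i$ to be a single letter $a^y$, contradicting reducedness of the original word. The same issue arises symmetrically for the interior $\pi_1(\cG',T')$-syllables in the amalgam case. You clearly anticipate that the translation to Britton/normal-form hypotheses is the crux, so I would just make this intermediate claim explicit rather than leaving it as ``by induction''.
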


\vspace{-3mm}
\paragraph*{The quotient of a $G$-tree.}\label{sec:GX}
Graphs of groups arise in a natural way in situations where a group $G$ acts (from the left) on some connected tree $X = (V,E)$ without edge inversion, i.\,e., $\ov e \notin Ge$ for all $e \in E$. 
We let $Y = G\bs X$ be the quotient graph with 
vertex set $V(Y) = \set{Gv}{v \in V}$ and edge set $E(Y) = \set{Ge}{e \in E}$ and incidences and involution induced by $X$. By choosing representatives we find embeddings $\iota: V(Y) \hookrightarrow V$ and $\iota: E(Y) \hookrightarrow E$ and we can assume that $\iota(V(Y))$ forms a connected subgraph of $X$ and that $\iota(\ov y) = \ov{\iota(y)}$.
For $P \in V(Y)$, $y\in E(Y)$, we define vertex and edge groups as the stabilizers of the respective representatives: $G_P= \Stab(\iota P ) = \set{g \in G}{g\iota P = \iota P}$ and $G_y= \Stab(\iota y ) = \set{g \in G}{g\iota y = \iota y}$. Note that as abstract groups the vertex and edge groups are independent of the choice of representatives since stabilizers in the same orbit are conjugate.
Moreover, for each $y \in E(Y)$, there are $P,Q \in V(Y)$ and $g_y,h_y \in G$ such that 
$s(\iota y) = g_y\iota P$ and $t(\iota y) = h_y\iota Q$. Note that $P$ and $Q$ are uniquely determined by $y$, whereas for $g_y$ and $h_y$ only the left cosets $g_yG_P$ resp.\ $h_yG_Q$ are uniquely determined. Hence, here is another choice involved; still we can choose them such that $g_y = h_{\ov y}$ and $h_{y} = g_{\ov y}$. This yields two embeddings: 
\begin{align}\label{eq:inclhoms}
G_y \to G_P &,\qquad a \mapsto a^y = \ov g_y a g_y, & &\text{ and }&
G_y \to  G_Q  &, \qquad a \mapsto a^{\ov y}= \ov h_y a h_y.
\end{align} 	
Hence, we have obtained a well-defined graph of groups over $Y$. 
Notice that the $G_y^y$ and $G_y^{\ov y}$ depend on the choice of $g_y$ and $h_y$ (and change via conjugation when changing them).

We define a homomorphism $\phi:\Delta^* \to G$ by  $\phi(g) = g$ for 
$g \in G_P$, $P \in V(Y)$. For  $y \in E(Y)$, we set
$\phi(y) = \ov g_y h_y$. That means $\phi(y)$ maps some edge in the preimage of $y$ and terminating in $\iota t(y)$ to an edge in the preimage of $y$ with source in $\iota s(y)$. By our assumption, we have $\ov{ \phi(y)} = \ov h_yg_y = \phi(\ov y)$. 
Since 
$ \phi(\ov y a^y y) =\phi(\ov y)\phi( a^y) \phi(y) = \ov h_y g_y a^y \ov g_y h_y = a^{\ov y} = \phi(a^{\ov y})$, we obtain a well-defined 
homomorphism $\phi: F(\cG) \to G$.
\begin{theorem}[\cite{serre80}]\label{thm:bst}
	\label{prop:gXX}
	The restriction $\phi: \pi_1(\cG,P)\to G$ is an isomorphism.
\end{theorem}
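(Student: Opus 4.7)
The plan is to prove surjectivity and injectivity of $\phi$ separately, using the tree structure of $X$ for surjectivity and Lemma~\ref{lem:reduced_word} for injectivity.

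For surjectivity, I would fix $g\in G$ and use connectedness of $X$ to pick a geodesic path $\iota P = v_0, e_1, v_1, \dots, e_n, v_n = g\iota P$ in $X$. Projecting via the quotient map $X \to Y = G\backslash X$ yields a closed path $\ov{e_1}\cdots\ov{e_n}$ at $P$ in $Y$. By the choice of the representatives $\iota$, for each $i$ there is $k_i\in G$ with $e_i = k_i \iota(\ov{e_i})$, and the sources/targets can be expressed using the chosen $g_y,h_y$. Collecting the ``correction factors'' between consecutive edges into vertex stabilizer elements, I would extract coefficients $g_0,\dots,g_n$ with $g_i\in G_{s(\ov{e_{i+1}})}$ (for $i<n$) and $g_n\in G_P$, such that $w = g_0\,\ov{e_1}\,g_1\cdots \ov{e_n}\,g_n$ lies in $\pi_1(\cG,P)$ and satisfies $\phi(w) = g$ by construction of $\phi$ on the generating set $\Delta$.

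For injectivity, by Lemma~\ref{lem:reduced_word} it suffices to show that a non-empty reduced word $w = g_0 y_1 g_1 \cdots y_n g_n \in \pi_1(\cG,P)$ satisfies $\phi(w) \ne 1$. I would inductively trace $w$ in $X$: starting at $\iota P$, the initial edge is $g_0\iota(y_1)$, after which I land at $g_0 h_{y_1}\iota Q_1$ (with $Q_1 = t(y_1)$); multiplying by $g_1$ keeps me at a translate of $\iota Q_1$, and then $y_2$ is traversed from there, and so on. The resulting sequence of edges forms a path in $X$ from $\iota P$ to $\phi(w)\iota P$. The key claim is that this path is without backtracking: a backtracking step between consecutive edges $y_i,y_{i+1}$ would force $y_{i+1}=\ov{y_i}$ and the intermediate element $g_i$ to lie in $G_{y_i}^{\ov{y_i}}$, i.e.\ to be of the form $a^{\ov{y_i}}$ for some $a\in G_{y_i}$; but then $\ov{y_i}\,a^{y_i}\,y_i = a^{\ov{y_i}}$ would allow a reduction of $w$, contradicting reducedness. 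Since $X$ is a tree, a non-empty path without backtracking cannot be closed, so $\phi(w)\iota P \ne \iota P$ and $\phi(w)\ne 1$.

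The technical heart, and the main obstacle, is the precise bookkeeping in the injectivity argument: one must verify that the recipe defining $\phi$ on the edge generators via $\phi(y) = \ov g_y h_y$ together with the relation $\ov y a^y y = a^{\ov y}$ in $F(\cG)$ faithfully translates the reducedness condition in $F(\cG)$ into the non-backtracking condition along the constructed path in $X$. Once this correspondence is set up carefully, both directions of the isomorphism follow in a uniform way, and one does not need to construct the universal Bass-Serre tree explicitly, since $X$ itself plays that role.
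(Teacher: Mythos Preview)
The paper does not supply its own proof of this theorem; it is quoted from Serre~\cite{serre80} as a black box. Your outline is precisely the standard argument (lift a path in $X$ for surjectivity; trace a reduced word as a non-backtracking edge path in $X$ for injectivity), and the bookkeeping you sketch does go through with the conventions $g_{\ov y}=h_y$ and $\phi(y)=\ov g_y h_y$ fixed in the paper.

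Two minor remarks. First, the appeal to Lemma~\ref{lem:reduced_word} is not actually needed for injectivity: termination of the rewriting (so that every element has \emph{some} reduced representative) already suffices, since your path-tracing shows directly that non-empty reduced words have non-trivial $\phi$-image; in fact your argument re-proves Lemma~\ref{lem:reduced_word} as a by-product, so there is no circularity risk. Second, the case $n=0$ (i.e.\ $w=g_0\in G_P\setminus\{1\}$) is not covered by the non-backtracking argument, since then the traced path is empty and $\phi(w)\cdot\iota P=\iota P$; but this case is immediate because $\phi$ restricted to $G_P$ is the identity inclusion $G_P\hookrightarrow G$.
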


%
%
%

\vspace{-2mm}
\subsection{Context-free groups and graphs}
\label{sub:cf_groups}\label{sec:cf_groups}
In this section we recall some results from \cite{ms83,ms85,sen93icalp,sen96dimacs}.
\begin{definition}
	A group is called \emph{context-free}, if its word problem 
	 is a context free language.
\end{definition}
Notice that the word problem of a context-free group is decidable in polynomial time~-- even if the grammar is part of the input~- by applying the CYK algorithm (see \eg\ \cite{HU}).

\begin{definition}[k-triangulable]\label{def:k-triangulable}
	Let $\Gamma$ be a graph. Let $k\in\N$ and let $\gamma = v_0, v_1,\dots, v_n=v_0$ be a sequence of vertices $\Gamma$ such that $d(v_{i-1}, v_i) \leq k$ for all $i\in \oneset{1, \dots, n}$  (\eg $\gamma$ can be a closed path).
	Let $P$ a convex polygon in the plane whose vertices are labeled by the vertices of $\gamma$ (i.e. we consider $\gamma$ as a simple closed curve in the plane). 
	A \emph{$k$-triangulation} of $\gamma$ is a triangulation of $P$ which does not introduce any additional vertices (thus only consists of ``diagonal'' edges) and such that vertices joined by a diagonal edge are are distance at most $k$. \ifArxiv We call the diagonal edges \emph{chords}. A path witnessing that the end vertices of the chord are at distance at most $k$ is called a \emph{label} of the chord. \fi
	If $n < 3$, we consider $\gamma$ as triangulated.
	
	If every closed path $\gamma$ has a $k$-triangulation, then $\Gamma$ is called $k$-triangulable.
\end{definition}

\ifArxiv

\begin{lemma}[\,{\!\cite[Thm.\ I]{ms83}}]\label{lem:triangulation_k}
	Let $(V, \Sigma, P,S)$ be a context-free grammar in Chomsky normal form for the word problem 
	of $G$ where $\Sigma$ is a symmetric generating set. 
	Then the Cayley graph $\Gamma$ can be 
	$k$-triangulated for $k = 2^{\abs{P}}$. Moreover, if $\Sigma$ is not symmetric $\Gamma$ can be 
	$k$-triangulated for $k = 2^{\abs{P} + 2}$
\end{lemma}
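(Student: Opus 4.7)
The plan is a parse-tree triangulation. I focus on the symmetric case; the non-symmetric case follows by symmetrising the grammar, which at most quadruples the size, yielding the factor $4=2^2$ in the exponent. Given a closed path $\gamma=v_0,\dots,v_n=v_0$ in $\Gamma$, by translation invariance of the Cayley graph I may assume $v_0=1$, and by replacing each side with a geodesic the problem reduces to the case where $\gamma$ is an actual closed walk in $\Gamma$ labelled by a word $w\in\WP{G}$ (for a genuine closed sequence with longer sides, one afterwards projects the fine-polygon triangulation back onto coarse vertices, absorbing the inflation into $k$).

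The heart of the proof are two claims about the grammar. First, for every nonterminal $A\in V$ reachable from $S$ and productive, all words in $L(A)$ represent the same group element $g_A\in G$: indeed, pick any context $S\Rightarrow^* uAv$; then for $w_1,w_2\in L(A)$ both $uw_1v$ and $uw_2v$ lie in $\WP{G}$, which forces $\pi(w_1)=\pi(u)^{-1}\pi(v)^{-1}=\pi(w_2)$. Second, $|g_A|_\Sigma\leq 2^{|V|-1}\leq 2^{|P|-1}$: in a minimum-yield derivation tree from $A$ no nonterminal may repeat along any root-to-leaf path (otherwise substituting the descendant subtree for the ancestor strictly shortens the yield), so the binary CNF tree has depth at most $|V|-1$ and hence at most $2^{|V|-1}$ leaves, whose concatenation is a word representing $g_A$.

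From any parse tree $T$ of $w$ one then reads off the triangulation: each internal node $\nu$ applying the rule $A_\nu\to B_\nu C_\nu$ to the factor $w_\nu=w[a_\nu+1\dots b_\nu]$ with split $k_\nu$ yields the triangle $v_{a_\nu}v_{k_\nu}v_{b_\nu}$, whose three sides have Cayley lengths $|g_{B_\nu}|_\Sigma,|g_{C_\nu}|_\Sigma,|g_{A_\nu}|_\Sigma$, each bounded by $2^{|P|-1}\leq 2^{|P|}$. The binary structure of $T$ guarantees that these triangles tile the polygon (non-root internal nodes correspond to triangles, leaves to polygon sides), so every diagonal introduced has length at most $k=2^{|P|}$. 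The main obstacle is the first of the two claims~-- that each nonterminal represents a unique group element. This is a subtle but elementary consequence of cancellation in $G$ together with the grammar generating $\WP{G}$ \emph{exactly}; once it is in hand, the length bound and the parse-tree bookkeeping that produces the triangulation are straightforward.
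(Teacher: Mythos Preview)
Your treatment of the symmetric case is correct and coincides with what the paper sketches: it is precisely the Muller--Schupp parse-tree argument, with the bound $2^{|V|-1}\le 2^{|P|}$ obtained from the no-repeated-nonterminal observation. The two claims (well-definedness of $g_A$ and the depth bound) are exactly the ``easy induction'' the paper refers to.

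The non-symmetric case, however, has a genuine gap. Your sentence ``symmetrising the grammar, which at most quadruples the size, yielding the factor $4=2^2$ in the exponent'' does not hold up on two counts. Arithmetically, quadrupling $|P|$ would give $2^{4|P|}$, not $2^{|P|+2}$, so whatever you intend by ``quadruples'' cannot be a bound on the number of productions. More seriously, there is no cheap symmetrisation of the grammar that produces a CNF grammar for $\WP{G}$ over $\Sigma\cup\bar\Sigma$ from one over $\Sigma$. The obvious construction (add $\bar A$ with rules $\bar A\to\bar C\,\bar B$ and $\bar A\to\bar a$) fails to generate mixed words such as $a\bar a$; already for $G=\Z/2$ with $\Sigma=\{a\}$ it misses them. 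A correct route is to realise $\WP{G}$ over $\Sigma\cup\bar\Sigma$ as $\phi^{-1}(\WP{G})$ for the morphism $\phi$ sending $\bar a$ to a fixed word $u_a\in\Sigma^*$ with $u_a=_G a^{-1}$, but then the size of the new grammar depends on $\max_a|u_a|$, and bounding that quantity is precisely the missing step.

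The paper handles the non-symmetric case geometrically rather than syntactically: apply your symmetric-case argument to the \emph{directed} Cayley graph to obtain $k'$-triangulability with $k'=2^{|P|}$; use this to show that a shortest $\Sigma$-word for $a^{-1}$ has length at most $3k'$; then expand any closed path in the undirected Cayley graph to one in the directed graph (each inverse edge replaced by $\le 3k'$ forward edges), triangulate there, and project back, picking up a factor of at most $4$. This yields $k=4k'=2^{|P|+2}$. Your grammar-level shortcut skips exactly the step that produces the factor $4$, namely the bound on $|u_a|$ coming from triangulability of the directed graph.
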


\begin{proof}
	Let $(V, \Sigma, P,S)$ be a context-free grammar in Chomsky normal form for the word problem of $G$ and let $\Sigma$ be symmetric. In \cite{ms83} Muller and Schupp proved that $\Gamma$ is $k$-triangulable for some $k$. An easy induction shows that indeed $k = 2^{\abs{P}}$ suffices.
	
	In the case that $\Sigma \neq_G \Sigma ^{-1}$, the same holds but only for the directed Cayley-graph $\Gamma'$ (at all nodes we have an outgoing directed edge for each letter in $\Sigma$), i.e. $\Gamma'$ can be $k'$-triangulated for some $k' = 2^{\abs{P}}$. Now, for every $a\in \Sigma$ we can take a shortest word $w_a$ representing $a^ {-1}$ in $G$. By $k'$-triangulability of $\Gamma'$ we have that $w_a$ has length at most $3k'$ (otherwise we could find a shortcut). Now, if we have a closed path $\gamma$ in the undirected Cayley-graph $\Gamma$, every edge of $\gamma$ corresponds to a path of length at most $3k'$ in $\Gamma'$. The resulting path $\gamma'$ in $\Gamma'$ can be $k'$-triangulated. Since every vertex on $\gamma'$ is at distance at most $\frac{3k}{2}$, this triangulation gives rise to a $\frac{5k}{2}k'$-triangulation of $\gamma$. Hence, $\Gamma$ is $4k'$-triangulable.
\end{proof}

\begin{remark}
	From now on we always assume that $\Sigma$ is symmetric. As the previous lemma shows this gives only a linear blow-up to the triangulation constant $k$.
\end{remark}

\else

\begin{lemma}[\,{\!\cite[Thm.\ I]{ms83}}]\label{lem:triangulation_k}
	Let $(V, \Sigma, P,S)$ be a context-free grammar in Chomsky normal form for the word problem 
	of $G$ where $\Sigma$ is a symmetric generating set. 
	Then the Cayley graph $\Gamma$ can be 
	$k$-triangulated for $k = 2^{\abs{P}}$.
\end{lemma}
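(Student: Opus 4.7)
The plan is to appeal to Muller and Schupp's theorem \cite{ms83}, which establishes that the Cayley graph of any context-free group is $k$-triangulable for some constant $k$ depending on the grammar. Our task reduces to verifying, by a careful accounting of the induction in that theorem, that when the grammar is in Chomsky normal form one may take $k = 2^{\abs{P}}$.

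I would proceed by induction on the length $n$ of a closed path $\gamma$ in $\Gamma$. The base case $n \le 3$ is trivial since $\gamma$ is already a triangle. For the inductive step, I would exhibit a chord of length at most $k = 2^{\abs{P}}$ splitting $\gamma$ into two strictly smaller closed paths, each triangulable by the induction hypothesis. To locate such a chord, label $\gamma$ by a word $w$, which lies in $L(S) = \WP{G}$ because $\pi(w) = 1$, and fix a parse tree $T$ of $w$ in the given grammar. Since the grammar is in Chomsky normal form, the underlying tree of variables in $T$ is a proper binary tree, and a standard centroid argument produces an internal node $N$ of $T$ whose subtree derives a contiguous subword $w_N$ of $w$ with $n/3 \le \abs{w_N} \le 2n/3$. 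The endpoints $v_i, v_j$ of the subpath of $\gamma$ labeled by $w_N$ are the intended endpoints of the chord.

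The main work is bounding $d_\Gamma(v_i, v_j)$ by $2^{\abs{P}}$. Here one uses the standard CNF pumping lemma: in a grammar with $\abs{V} \le \abs{P}$ variables, any parse tree of height exceeding $\abs{V}$ has a repeated variable on some root-to-leaf path, which allows one to pump down and strictly shrink the tree. Consequently, every productive variable $A$ derives some word of length at most $2^{\abs{V}-1} \le 2^{\abs{P}-1}$. A recursive peeling of the derivation $A \Rightarrow^* w_N$ along these lines, exploiting that $\gamma$ is closed and cancellations therefore occur along the surrounding loop, produces a representative of the group element $\pi(w_N) = v_i^{-1} v_j$ of length at most $2^{\abs{P}}$. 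Once the chord is in hand, splitting $\gamma$ along it yields two closed loops of length at most $2n/3 + k$, so the induction closes for $n > 3k$, and the finitely many smaller loops are handled directly by enumeration.

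The main obstacle is this last step: a short word derivable from $A$ need not represent the specific group element $\pi(w_N)$, so the naive replacement of $w_N$ by a shortest element of $L(A)$ fails in general. The argument must therefore track how pumping inside the subtree at $N$ interacts with the rest of $\gamma$, using that the whole label $w$ represents the identity, to convert short derivable words into short representatives of the particular element $v_i^{-1} v_j$. The explicit exponent $\abs{P}$ in the bound arises from the number of distinct variables that can appear on a compressed derivation path, each of which contributes at most one doubling to the length.
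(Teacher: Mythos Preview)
Your proposal contains a genuine gap, but it is the opposite of what you think: the ``main obstacle'' you identify at the end is not an obstacle at all, and failing to see why is precisely the missing idea.

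You write that ``a short word derivable from $A$ need not represent the specific group element $\pi(w_N)$''. In fact it must. Since the grammar generates exactly $\WP{G}$, take any reachable variable $A$, say $S \Rightarrow^* xAy$. If $A \Rightarrow^* u$ and $A \Rightarrow^* v$, then both $xuy$ and $xvy$ lie in $\WP{G}$, so $xuy =_G 1 =_G xvy$ and hence $u =_G v$. Every word derivable from $A$ therefore represents a single well-defined group element $g_A \in G$, and in particular the shortest such word (of length at most $2^{\abs{V}-1} \le 2^{\abs{P}}$ by the standard pumping/no-repeated-variable argument you already describe) represents $\pi(w_N)$. The ``naive replacement'' you dismiss is exactly what works; there is no need for any ``recursive peeling'' or interaction with the rest of $\gamma$.

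With this observation in hand, the centroid-and-recurse scheme is also unnecessary. In Chomsky normal form the parse tree of $w$ is a full binary tree on the variable nodes, and each internal node $A \to BC$ covering positions $[i,j] = [i,m] \cup [m,j]$ yields the triangle $(v_i, v_m, v_j)$; the three sides are chords whose endpoints are joined by words of length at most $\max_A \abs{g_A} \le 2^{\abs{P}}$. The parse tree \emph{is} the triangulation. This is the ``easy induction'' the paper alludes to; the paper itself gives no further detail beyond citing \cite{ms83} and asserting the bound.
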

Note that in \cite{ms83} only the existence of some $k$ is shown; however, an easy induction shows the bound of \prettyref{lem:triangulation_k}. Moreover, the condition that $\Sigma$ is a symmetric generating set is not really necessary as shown in \cite{SenizerguesW18arxiv}.

\fi

\begin{lemma}[\,{\!\cite[p.65]{ms85}}]\label{lem:p65_ms85}
	Let $\GG$ be $\kappa$-triangulable and let $r\in \N$.  If $C$ is a connected component of $\Gamma - B(r)$, then             
	$\diam(\partial C)\leq 3\kappa$. 
\end{lemma}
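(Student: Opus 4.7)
The plan is as follows. Let $u, v \in \partial C$. By the definition of $\partial C$, each of $u, v$ has a neighbor in $\Comp C = V(\Gamma) \setminus C$. Since $C$ is a connected component of $\Gamma - B(r)$ and no edge of $\Gamma$ can connect two distinct components of $\Gamma - B(r)$, these neighbors, call them $u'$ and $v'$ respectively, must both lie in $B(r)$. In particular $d(u,1), d(v,1) \leq r+1$.

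Next, I form a closed path encircling the ``hole'' $B(r)$. Let $\gamma$ be a geodesic from $u$ to $v$ in $\Gamma$ (so $|\gamma| = d(u,v)$), and let $\sigma$ be a path from $v'$ to $u'$ obtained by concatenating geodesics $v' \to 1 \to u'$; since both halves have length at most $r$, $\sigma$ has length $\leq 2r$ and lies entirely in $B(r)$. Define the closed path
\[
\pi \;=\; \gamma \cdot (v,v') \cdot \sigma \cdot (u',u).
\]
By \prref{lem:triangulation_k}, $\pi$ admits a $k$-triangulation in which every chord connects vertices at distance at most $k$ in $\Gamma$.

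The heart of the proof is to extract the bound $d(u,v) \leq 3k$ from this triangulation via a triangle-chasing argument. Consider the triangle $T_1$ incident to the boundary edge $(u',u)$, and let $w_1$ be its third vertex; then $d(u,w_1) \leq k$. By case analysis on the location of $w_1$ (either on $\gamma$, on $\sigma$, or at $v$ or $v'$), one either concludes directly (e.g.\ if $w_1 \in \oneset{v, v'}$, giving $d(u,v) \leq k+1$) or cuts the polygon along the chord $(u, w_1)$ and iterates in the sub-polygon containing $v$. Using that $\gamma$ is a geodesic (so progress along $\gamma$ cannot be ``wasted'') together with the fact that $\sigma$ lies in $B(r)$ while $v \notin B(r)$, the iteration terminates after at most three chord-hops, concatenating to a path of length $\leq 3k$ from $u$ to $v$ in $\Gamma$.

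The main obstacle is the careful bookkeeping of this triangle-chasing so as to obtain the tight constant $3k$: one must track how each successive chord endpoint $w_i$ is positioned on the shrinking polygon and argue that within three hops either the sub-polygon has collapsed (reaching $v$ or $v'$) or the geodesic $\gamma$ has been exhausted. Intuitively, the ``$3$'' in $3k$ reflects that the geodesic triangle formed by $u$, $v$, and a midpoint of $\sigma$ near $1$ is $k$-thin, so $u$ and $v$ can be joined via at most three $k$-length chords linking them through the $B(r)$ region.
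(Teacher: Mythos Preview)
The paper does not give its own proof of this lemma; it simply cites \cite[p.~65]{ms85}. So your attempt must be judged on its own, and it contains a genuine gap together with one suboptimal choice that blocks the natural fix.

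\textbf{The gap.} Your central claim---that the triangle-chasing from the edge $(u',u)$ ``terminates after at most three chord-hops''---is asserted, not proved. Starting at $(u',u)$ and repeatedly passing to the adjacent triangle, the third vertex $w_i$ may land on $\gamma$ or on $\sigma$ many times before you ever reach $v$ or $v'$; nothing in your setup prevents twenty hops. Geodesicity of $\gamma$ only tells you that a chord landing on $\gamma$ advances at most $k$ along $\gamma$, not that three hops exhaust~$\gamma$. The final paragraph about $k$-thin triangles is intuition, not an argument.

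\textbf{The wrong choice of $\gamma$, and the actual proof.} The classical argument does \emph{not} take $\gamma$ to be a geodesic from $u$ to $v$; it takes a path $\alpha$ from $u$ to $v$ lying entirely in $C$ (possible since $C$ is connected). This is the key point you are missing: it forces every vertex of $\alpha$ to satisfy $d(\,\cdot\,,1)\geq r+1$. For the other two sides one takes geodesics $[1,u]$ and $[1,v]$, each of length exactly $r+1$. Now $k$-triangulate the resulting closed path, viewed as a polygon with three sides $\alpha$, $[1,u]$, $[1,v]$. A standard combinatorial fact (an instance of Sperner's lemma for triangulated polygons) gives a single triangle of the triangulation with one vertex on each side, say $p\in\alpha$, $a\in[1,u]$, $b\in[1,v]$, with pairwise distances at most $k$. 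Since $p\in C$ we have $d(1,p)\geq r+1$, hence
\[
d(1,a)\ \geq\ d(1,p)-d(p,a)\ \geq\ (r+1)-k,
\qquad\text{so}\qquad
d(a,u)\ =\ (r+1)-d(1,a)\ \leq\ k,
\]
and symmetrically $d(b,v)\leq k$. Therefore $d(u,v)\leq d(u,a)+d(a,b)+d(b,v)\leq 3k$.

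With your geodesic $\gamma$ this breaks down: $\gamma$ may dip into $B(r)$, so for the rainbow vertex $p\in\gamma$ you cannot conclude $d(1,p)\geq r+1$, and the inequality $d(a,u)\leq k$ is lost. Replace $\gamma$ by a path in $C$ and use the rainbow-triangle, and the constant $3k$ falls out in one step rather than an unspecified iteration.
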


\begin{lemma}\label{lem:lemma6_icalp93}
	Let $\GG$ be connected and $k$-triangulable and let $C\sse V(\GG)$  be a cut. 
Then 
	$\diam(\beta C) \leq \frac{3k}{2} \vert\vecdel C\vert$.  
\end{lemma}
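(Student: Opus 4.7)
My plan is to construct a closed walk $\gamma$ in $\Gamma$ passing through every vertex of $\beta C$, apply the $k$-triangulability hypothesis to $\gamma$, and exploit the resulting chord structure to extract a short path in $\Gamma$ between any two vertices of $\beta C$.

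Since $C$ and $\overline C$ are both connected, I can construct such a $\gamma$ as follows: list the boundary edges $e_1, \dots, e_m$ (where $m = \vert\vecdel C\vert$), start at $s(e_1)\in C$, traverse $e_1$ to $t(e_1)\in\overline C$, walk inside $\overline C$ to $t(e_2)$, traverse $\overline{e_2}$ to $s(e_2)\in C$, walk inside $C$ to $s(e_3)$, and so on, eventually returning to $s(e_1)$. By choosing the internal walks in $C$ and $\overline C$ to additionally pass through any prescribed boundary vertices, I ensure that every vertex of $\beta C$ lies on $\gamma$. The length of $\gamma$ may be arbitrarily large, but the crucial structural feature is that exactly $2m$ of its consecutive-vertex transitions are traversals of edges of $\vecdel C$ (once in each direction). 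Applying the $k$-triangulability hypothesis to $\gamma$ yields a triangulation of the polygon labeled by $\gamma$ in which every chord represents a pair of vertices at distance at most $k$ in $\Gamma$.

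For fixed $u, v \in \beta C$ (both lying on $\gamma$), I then plan to extract a short path in $\Gamma$ by walking along a sequence of chords and polygon edges of the triangulation, from a triangle containing $u$ to one containing $v$ along the dual tree. The key combinatorial input is the $\{C,\overline C\}$-coloring of polygon vertices: the polygon has exactly $2m$ bi-chromatic polygon edges, namely those in $\vecdel C\cup\overline{\vecdel C}$. A counting/amortization argument based on the structure of bi- and mono-chromatic triangles in the triangulation then bounds the number of chord-hops from $u$ to $v$ by $\lceil 3m/2\rceil$; since each such hop is realized in $\Gamma$ by a path of length at most $k$, the total distance $d_\Gamma(u,v)$ is at most $\frac{3k}{2} m$, as required.

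The main obstacle is this final chord-counting argument. One has to analyze the dual tree of the triangulation, restricted appropriately to the bi-chromatic triangles, and show that the chord-distance in the triangulation between any two vertices of $\beta C$ is at most $\frac{3m}{2}$. The factor $\frac{3}{2}$ reflects the fact that a bi-chromatic triangle has exactly two of its three edges bi-chromatic, so amortizing chord-hops against the $2m$ bi-chromatic polygon edges yields the ratio $\frac{3}{2}$; keeping careful track of the dual-tree traversal and of how the endpoints $u, v$ sit relative to the bi-chromatic/mono-chromatic decomposition is the technically delicate part, while the short path in $\Gamma$ is then obtained by concatenating the $\Gamma$-realizations of the chosen chords.
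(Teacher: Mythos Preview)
Your global-triangulation strategy is natural but genuinely different from the paper's, and the crucial chord-counting step does not go through as you describe. First a minor point: the walk you construct traverses each boundary edge once (alternating $e_i$ and $\overline{e_{i+1}}$), so there are $m=\vert\vecdel C\vert$ bi-chromatic polygon edges, not $2m$. More seriously, the number of chord-hops along the dual-tree path between two fixed polygon vertices $u,v$ is not controlled by $m$ at all: the internal walks inside $C$ and $\overline C$ can be arbitrarily long, so the polygon may have arbitrarily many vertices and the dual tree arbitrarily many nodes, most of them mono-chromatic. Restricting to bi-chromatic chords does not help either: a fan triangulation from a single $C$-vertex over a long run of $\overline C$-vertices produces arbitrarily many bi-chromatic chords, so the bi-chromatic-edge chain between two boundary vertices can also be arbitrarily long. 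Your heuristic (``a bi-chromatic triangle has two of its three edges bi-chromatic'') correctly counts incidences but does not bound the number of bi-chromatic triangles, which is what you would need.

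The paper avoids this by never forming one global triangulation. It argues inductively: having enumerated boundary edges $e_0,\dots,e_i$ with endpoints within $\tfrac{3k}{2}i+1$ of $v_0$, it takes a geodesic $\gamma$ from $v_0$ toward an uncovered vertex of $\beta C$, isolates the segment of $\gamma$ between the last ``known'' crossing $e\in\{e_0,\dots,e_i\}$ and the next ``unknown'' crossing $f\in\delta C$, closes this segment up through the other side of the cut, and $k$-triangulates \emph{that} small loop. In this loop the bi-chromatic triangles form a single linear chain from $e$ to $f$, and each bi-chromatic chord, realised as a path in $\Gamma$, must contain some edge of $\delta C$; scanning along the chain one finds a single triangle whose two crossing sides contain respectively a known edge $e_j$ and a new edge $e_{i+1}\in\delta C\setminus\{e_0,\dots,e_i\}$. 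That one triangle has perimeter at most $3k$, so $e_{i+1}$ lies within $\tfrac{3k}{2}$ of $e_j$, and the induction proceeds. The key difference: each step uses a fresh, purpose-built loop and extracts exactly one new boundary edge from one triangle, so no global hop-count is ever needed. If you wish to repair your approach, the missing ingredient is precisely this labeling of bi-chromatic chords by the edge of $\delta C$ they contain in $\Gamma$, together with an argument collapsing chords with the same label; carrying that out essentially reconstructs the paper's inductive step.
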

This lemma is asserted (without proof) in \cite[Lemma 6]{sen93icalp} with a slightly worse bound on $\diam(\beta C)$.

\begin{proof}
	
	We show that for every $v_0 \in \beta C$ we have \[\beta C \sse B_{v_0}\left(\frac{3k}{2}  \left(\vert\vecdel C\vert-1\right) + 1\right).\]  More precisely, we will inductively construct an enumeration $\vecdel C = \oneset{e_0,  \dots, e_n}$ for $n= \vert\vecdel C\vert - 1$ such that $v_0$ is an endpoint of $e_0$ and $\dist(v_0, t(e_i)),\dist(v_0, s(e_i)) \leq \frac{i\cdot 3k}{2} +1$ for $i > 0$. 

	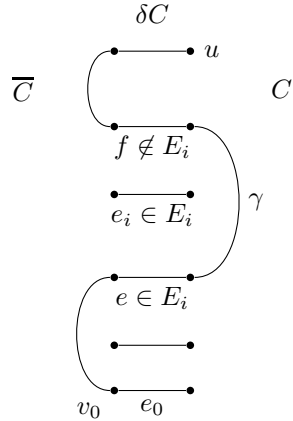
\begin{figure}[thb]
	\begin{center}
		\small
		\begin{tikzpicture}[dots/.style={fill, circle, inner sep = 1pt}, arrow/.style={bend angle=90}]
		\node[dots, label=below left:$v_0$] (vo) at(0,0){};
		\node[dots] (vo0) at (1,0){};
		
		\node[dots] (w0) at(0,0.6){};
		\node[dots] (w00) at (1,.6){};
		
		\node[dots] (w1) at(0,1.5){};
		\node[dots] (w10) at (1,1.5){};
		\node[dots] (w2) at(0,2.6){};
		\node[dots] (w20) at (1,2.6){};
		\node[dots] (w3) at(0,3.5){};
		\node[dots] (w30) at (1,3.5){};
		\node[dots] (u) at(0,4.5){};
		
		\node[dots, label=right:$u$] (u0) at (1,4.5){};
		
		\draw (vo) --node[below] {$e_0$} (vo0);
		\draw (w0) -- (w00);
		\draw (w1) --node[below] {$e\in E_i$} (w10);
		\draw (w2) --node[below] {$e_i \in E_i$} (w20);
		\draw (w3) --node[below] {$f \not\in E_i$} (w30);
		\draw (u) -- (u0);
		
		\node[] (x) at(0.5,5){$\delta C$};
		
		\draw[arrow, bend left] (vo) to (w1);
		\draw[arrow, bend right] (w10) to node[right]{$\gamma$}  (w30);
		\draw[arrow, bend left] (w3) to (u);
		
		\node at (-1.2,4){$\Comp C$};
		\node at (2.2,4){$C$};
		
		\end{tikzpicture}

	\end{center}
	\caption[]{A shortest path $\gamma$ from $v_0$ to $u$. We have $e\in E_i = \oneset{e_0,\ov e_0 \dots, e_i, \ov e_i}$ and $f \in \delta C \setminus E_i$ }\label{fig:lemma6_icalp93a}
\end{figure}

	Let $e_0$ be an edge with endpoint $v_0$. Then both endpoints of $e_0$ have distance at most one to $v_0$.
	
	Let $ \oneset{e_0, \dots, e_i}$ be constructed for some $i < n$. %
	Now let $u \in \beta C$ be a vertex which is not an endpoint of an edge in $ \oneset{e_0, \dots, e_i}$ and take a shortest path $\gamma$ from $v_0$ to $u$ in $\Gamma$. We denote $E_i = \oneset{e_0,\ov e_0 \dots, e_i, \ov e_i}$. Let $e$ be the last edge on $\gamma$ from $E_i $ and let $f$ the next edge on $\gamma$ belonging to $\delta C $ (\ie\  $f \in \delta C \setminus E_i$)~-- see \prettyref{fig:lemma6_icalp93a}.

	Let $\gamma'$ be the subpath of $\gamma$ connecting $t(e)$ to $s(f)$~-- \ie $\gamma'$ is a shortest path from $t(e)$ to $s(f)$. \Wlog we may assume that $\gamma'$ is contained in $C$. Since also $\Comp{C}$ is connected, there is also a path $\gamma''$ from $t(f)$ to $s(e)$ inside $\Comp{C}$. 
	
	The closed path $\gamma'f\gamma''e$ can be $k$-triangulated. If $\gamma'$ and $\gamma''$ are of length zero, we take $e_{i+1} = f$ and we are done by induction. Otherwise, there must be at least one triangle having two vertices in $\gamma'$ and one in $\gamma''$ or vice-versa. Moreover, all the triangles of this form build a linearly ordered chain where on one end there is one triangle with a side labeled by $e$ and on the other end a triangle labeled by $f$. Now, all these triangles have two sides which correspond to paths crossing $\delta C$~-- hence these paths all contain an edge of $\delta C$. Now consider the last triangle such that one side (and only one side) contains an edge from $ E_i$~-- lets say $e_j$~-- and one edge from $\delta C \setminus E_i$. The latter edge (in its correct orientation) is the new edge $e_{i+1}$. Now, the length of the three sides of the triangle together is at most $3k$. Therefore, the distance between any two vertices in any of the sides is at most $3k/2$. In particular, \[\max \set{ \dist(u,v) }{u \in \smallset{s(e_j), t(e_j)},\, v \in \smallset{s(e_{i+1}), t(e_{i+1})}}  \leq \frac{3k}{2}\] and we are done by induction.
	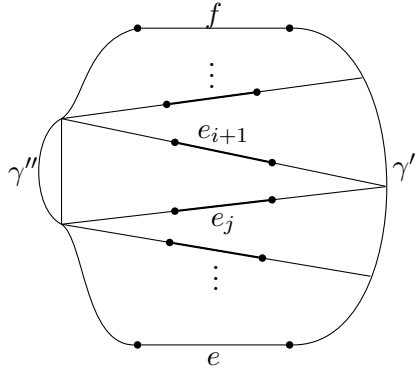
\begin{figure}
		\begin{center}		
			\begin{tikzpicture}[every node/.style={inner sep = 0pt,outer sep = 0pt}, dots/.style={fill, circle, inner sep = 1pt}, arrow/.style={bend angle=90}]
			
			\node[dots] (w1) at(-1,0){};
			\node[dots] (w10) at (1,0){};
			
			\node[dots] (w3) at(-1,4.2){};
			\node[dots] (w30) at (1,4.2){};
			
			\draw (w1) --node[below= 3pt] {$e$} (w10);
			
			\draw (w3) --node[above] {$f$} (w30);
			\draw[arrow, bend right] (w10) to node[pos=.3] (ll) {} node (lmid) {} node[above right= 3pt ] {$\gamma'$} node[pos=.75] (lu) {} (w30);
			
			\draw (w1) ..controls (-1.7,0) and (-1.7,1.4) .. (-2,1.6) ..controls (-2.4,1.8) and (-2.4,2.8) .. node[left] {$\gamma''$} (-2,3)  ..controls(-1.7,3.2) and (-1.7,4) .. (w3);
			
			\draw (-2,1.6) -- (-2,3) ;
			\draw  (-2,1.6) -- node[pos=.35,dots] (al) {} node[pos=.65,dots] (bl) {} (lmid) ;
			\draw[thick] (al) -- node[below= 3pt ] {$e_j$} (bl);
			
			\draw  (lmid) to node[pos=.35,dots] (au) {} node[pos=.65,dots] (bu) {} (-2,3) ;
			
			\draw  (lu) to node[pos=.35,dots] (auu) {} node[pos=.65,dots] (buu) {} (-2,3) ;
			\draw  (-2,1.6) -- node[pos=.35,dots] (all) {} node[pos=.65,dots] (bll) {} (ll) ;
			\draw[thick] (all) -- node[below= 0pt ] (all) {$\vdots$}  (bll);
			\draw[thick] (auu) -- node[above= 4pt ] (all) {$\vdots$}  (buu);

			\draw[thick] (au) -- node[above= 3pt ] {$e_{i+1}$} (bu);
			
			
			
			\end{tikzpicture}
		\end{center}
		\caption[]{There is one triangle with one side containing an edge $ e_j \in \oneset{e_0,\ov e_0 \dots, e_i, \ov e_i}$ and one side containing an edge from $\delta C \setminus \oneset{e_0,\ov e_0 \dots, e_i, \ov e_i}$. This second edge is the new edge $e_{i+1}$. }\label{fig:lemma6_icalp93b}
	\end{figure}
\end{proof}

The following upper-bounds will be useful.

\begin{proposition}[\,{\!\cite[Prop.\ 1.2]{sen96dimacs}}]\label{prop:subgroupbound}
	Let $\Gamma$ be the Cayley graph on $X \cup \ov{X}$ of a group $G$ and let us suppose that 
	$\Gamma$ is $k$-triangulable.  Let $H\leq G$ be a finite subgroup. Then 
	$\abs{H} \leq (2\cdot\abs{X})^{12k + 10}.$
\end{proposition}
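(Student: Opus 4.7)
The plan is to bound $|H|$ by showing that, up to replacing $H$ by a conjugate, $H$ is contained in the ball $B(12k+10)$ around the identity in the Cayley graph $\Gamma$. Since every vertex of $\Gamma$ has out-degree at most $2|X|$, one then has $|H|\leq |B(12k+10)|\leq (2|X|)^{12k+10}$, as claimed.

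First, by left-invariance of the distance, for every $v\in V(\Gamma)=G$ the quantity
\[
\rho(v)\ :=\ \max_{h\in H} d(v,hv)\ =\ \max_{h'\in v^{-1}Hv} d(1,h')
\]
equals the maximum length of an element of the conjugate subgroup $v^{-1}Hv$. Since $H$ is finite, $\rho(v)$ attains a minimum $\rho^{*}$ at some $v^{*}\in G$; after replacing $H$ by $(v^{*})^{-1}Hv^{*}$ (which has the same cardinality), we may assume that $1$ minimises $\rho$ and write $\rho^{*}=\rho(1)=\max_{h\in H} d(1,h)$, so that $H\subseteq B(\rho^{*})$. It therefore suffices to prove $\rho^{*}\leq 12k+10$.

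Suppose, for contradiction, that $\rho^{*}>12k+10$, and fix $h_0\in H$ with $d(1,h_0)=\rho^{*}$. Choose a radius $r$ of order $\rho^{*}/2-O(k)$ and let $C$ be the connected component of $\Gamma-B(r)$ that contains $h_0$. By \prref{lem:p65_ms85}, $\diam(\partial C)\leq 3k$, so the interface between $C$ and the ball $B(r)$ is clustered inside a small set. Now pick a vertex $v\in\partial C$ lying on a geodesic from $1$ to $h_0$. The core claim is that $\rho(v)<\rho^{*}$, which contradicts the minimality of $\rho^{*}$. This is established by splitting $H$ into the elements $h$ with $hv\in C$ and those with $hv\notin C$: in the first case the geodesic from $v$ to $hv$ can be taken on the same side of the ``funnel'' $\partial C$ as $h_0$, so shifting the basepoint from $1$ into $C$ yields a substantial gain of roughly $r$; in the second case the loss is controlled by $\diam(\partial C)\leq 3k$, together with the more refined control of boundaries provided by \prref{lem:lemma6_icalp93}. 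Balancing these two effects, shifting the basepoint from $1$ to $v$ strictly decreases the maximal displacement provided $\rho^{*}>12k+10$. Consequently $\rho^{*}\leq 12k+10$, so $H\subseteq B(12k+10)$ and the desired bound follows.

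The main obstacle is the displacement-reduction argument in the middle step: one must carefully track, as a function of $r$ and of $\diam(\partial C)$, how much the distance to each element of $H$ shrinks or grows when the basepoint is shifted into $C$, and show that the net effect is a strict decrease. The specific constant $12k+10$ in the statement arises from optimising this trade-off, using both the diameter bound of \prref{lem:p65_ms85} on components of $\Gamma-B(r)$ and the boundary-diameter bound of \prref{lem:lemma6_icalp93} applied to the cut $C$.
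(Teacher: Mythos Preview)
The paper does not prove this proposition; it is quoted from \cite{sen96dimacs} and used as a black box. There is thus no in-paper argument to compare your attempt against, and your sketch has to stand on its own.

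Two issues stand out. The minor one: the inequality $|B(12k+10)|\le(2|X|)^{12k+10}$ is false in general, since $|B(r)|\le\sum_{i=0}^{r}(2|X|)^{i}$ and already $|B(1)|\ge 1+2|X|>(2|X|)^{1}$. From $H\subseteq B(12k+10)$ you therefore only get $|H|<(2|X|)^{12k+11}$; to hit the stated exponent you would need $\rho^{*}\le 12k+9$, or a sharper count.

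The substantive gap is the displacement-reduction step, which you yourself flag as the obstacle. As written, the case split does not control the right quantity. You need to bound $d(v,hv)=|v^{-1}hv|$, but the phrase ``shifting the basepoint \ldots\ yields a gain of roughly $r$'' describes what happens to $d(v,h_0)$, not to $d(v,h_0v)$: if $v$ lies on a geodesic $[1,h_0]$ with $h_0=vw$, then $v^{-1}h_0v=wv$, and $|wv|$ can equal $\rho^{*}$ when there is no cancellation between $w$ and $v$. The finiteness of $H$ has to enter in an essential way here (in a free group there is no nontrivial torsion, so the obstruction is genuine), and your sketch does not say how. The cleaner route is to minimise the orbit-radius $r(v)=\max_{h\in H}d(v,h)$ instead of $\rho(v)$: since $H$ is $H$-invariant under left multiplication one has $r(hv)=r(v)$, so every $hv^{*}$ is again a minimiser; a midpoint/thin-triangle argument using \prref{lem:p65_ms85} then shows that any two minimisers are at distance $O(k)$, whence $\rho(v^{*})=\max_h d(v^{*},hv^{*})\in O(k)$ directly. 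Finally, your appeal to \prref{lem:lemma6_icalp93} is misplaced: that lemma bounds $\diam(\beta C)$ in terms of $|\vecdel C|$, which you have not bounded for your component $C$, whereas \prref{lem:p65_ms85} already gives $\diam(\partial C)\le 3k$ (hence $\diam(\beta C)\le 3k+2$) without it.
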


\begin{theorem}[\,{\!\cite[Thm.\ 1.4]{sen96dimacs}}]\label{thm:1.4_dimacs96}
	Let $\Gamma$ be the Cayley graph on $X$ of a group $G$ and let us suppose 
that $\Gamma$ is
	$k$-triangulable. 
	Then every minimal graph of groups ${\cal G}$ admitting $G$ as fundamental
	group has at most
	${(2 \cdot \abs{ X })}^{12 k+11}$ undirected edges.
\end{theorem}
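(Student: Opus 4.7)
The plan is to transport the structure of $\cG$ to the Cayley graph $\Gamma$ via Bass--Serre theory and then exploit the $k$-triangulability of $\Gamma$. By Theorem~\ref{thm:bst} the graph of groups $\cG$ with $\pi_1(\cG)\cong G$ is encoded by an inversion-free $G$-action on the Bass--Serre tree $T$ with quotient $Y=G\backslash T$; the vertex and edge stabilizers are conjugates of the vertex and edge groups of $\cG$, hence finite, and by Proposition~\ref{prop:subgroupbound} each has order at most $N:=(2\abs{X})^{12k+10}$.

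I would then pick a base vertex $v_0\in V(T)$ and, for each $y\in E(Y)$, a lift $\tilde y\in E(T)$. Removing $\os{\tilde y,\ov{\tilde y}}$ splits $T$ into subtrees $T_1,T_2$; set $C_y:=\set{g\in G}{gv_0\in T_1}\subseteq V(\Gamma)$. Minimality of $\cG$ forces both $C_y$ and $\ov{C_y}$ to be non-empty; connectedness of $C_y$ (and $\ov{C_y}$) in $\Gamma$ is inherited from connectedness of the corresponding subtree together with connectedness of each coset $gG_{v_0}$ under the Cayley-graph multiplication. Crucially, $(g,a)\in\vecdel C_y$ iff the $T$-geodesic from $gv_0$ to $gav_0$ crosses $\os{\tilde y,\ov{\tilde y}}$; for each of the $2\abs{X}$ choices of $a$ the admissible $g$'s form a finite union of cosets of $G_{\tilde y}$, so $\abs{\vecdel C_y}\le 2\abs{X}\cdot\abs{G_{\tilde y}}$, and Lemma~\ref{lem:lemma6_icalp93} then forces $\beta C_y$ into a ball of controlled radius.

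Distinct edges $y,y'\in E(Y)$ yield distinct $G$-orbits of cuts, since a translator $gC_y=C_{y'}$ would send $\tilde y$ into the $G$-orbit of $\tilde y'$. To count the number of such orbits I would normalize each $G$-orbit so that $1\in\partial C_y$; then at least one edge $(1,a)\in\vecdel C_y$ is incident to $1$. This pins the orbit down by the data of a generator $a\in X\cup\ov{X}$ (at most $2\abs{X}$ choices) together with its $G_{v_0}$-translate witnessing the crossing (at most $\abs{G_{v_0}}\le N$ choices). Multiplying yields $\abs{E(Y)}\le 2\abs{X}\cdot N=(2\abs{X})^{12k+11}$.

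The main obstacle is the last step: establishing precisely that normalization reduces every $G$-orbit of cuts to a single pair (an outgoing generator at $1$, an element of the finite vertex stabilizer). This requires careful bookkeeping of how the $G_{v_0}$-action on edges of $T$ at $v_0$ transfers to $\Gamma$ under the identification $g\leftrightarrow gv_0$, and it is here that minimality of $\cG$ is used in an essential way~-- without it, collapsible edges of $T$ could produce duplicated orbits and inflate the count.
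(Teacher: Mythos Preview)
The paper does not prove this theorem here (it is quoted from \cite{sen96dimacs}), but the proof of the parallel Lemma~\ref{lem:gog_size_outer} is explicitly labelled ``analogous'' and reveals the intended method, which is entirely different from yours: one invokes Linnell's accessibility inequality \cite[Thm.~2]{Linnell83},
\[
2\abs{X}\ \ge\ 1+\sum_{e}\frac{1}{\abs{G_e}},
\]
the sum running over the undirected edges of the graph of groups, and then simply multiplies through by $\max_e\abs{G_e}$, which is at most $(2\abs{X})^{12k+10}$ by Proposition~\ref{prop:subgroupbound}. Once Linnell's bound is granted this is a two-line computation; no geometry of $\Gamma$ beyond Proposition~\ref{prop:subgroupbound} is needed.

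Your route through cuts of $\Gamma$ induced by edges of the Bass--Serre tree is attractive, but the sketch has genuine gaps that are not merely ``bookkeeping''. First, connectedness of $C_y$ in $\Gamma$ does not follow from connectedness of the subtree $T_1$: the orbit map $g\mapsto gv_0$ is not a graph morphism, so a Cayley-graph edge $(g,a)$ may join two elements of $C_y$ while the $T$-geodesic from $gv_0$ to $gav_0$ wanders out of $T_1$ and back. Second, your bound $\abs{\vecdel C_y}\le 2\abs{X}\cdot\abs{G_{\tilde y}}$ is too optimistic: for fixed $a$, the condition that the geodesic $[gv_0,gav_0]$ crosses $\tilde y$ is equivalent to $g^{-1}\tilde y$ lying on the \emph{fixed} geodesic $[v_0,av_0]$, so the admissible $g$ form a union of as many stabilizer cosets as there are edges on $[v_0,av_0]$, and that length is not controlled by anything in your setup. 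Most seriously, the final count does not go through: knowing that $(1,a)\in\vecdel C_y$ only says that (a $G$-translate of) $\tilde y$ lies on $[v_0,av_0]$. Nothing forces the number of $G$-orbits of edges on that geodesic to be at most $\abs{G_{v_0}}$; minimality of the action guarantees that every edge orbit is hit by \emph{some} such geodesic, but it does not produce the injection into $(X\cup\ov X)\times G_{v_0}$ your bound $2\abs{X}\cdot N$ would require.
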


\vspace{-2mm}
\subsection{Optimal cuts and structure trees}\label{sec:cuts}

We briefly present the construction of optimal cuts and the associated structure tree from \cite{DiekertW13,DiekertW17crm}. While in
	\cite{DiekertW13}, the proof was for arbitrary accessible co-compact and locally
	finite graphs, here we assume that $\GG$ is the Cayley graph of a context-free group.
%
%
%
%
%
We are interested in bi-infinite simple paths which can be split into two infinite pieces by some cut. 
For a bi-infinite simple path $\alp$ denote:
\begin{align*}
\cC(\alp) &= \set{C \sse V(\GG)}{\text{$C$ is a cut and } \abs{\alp \cap C} = \infty = \abs{\alp \cap \Comp C}},\\
\cC_{\min}(\alp) &= \set{C \in \cC(\alp)}{\text{$\abs{\del C}$ is minimal in }\cC(\alp)},
\end{align*}
where we identify $\alpha$ with its set of vertices. 
If $\cC(\alp)\neq \es$, we say that $C$ \emph{splits} $\alpha$. 
We define the set of \emph{minimal cuts} $\cC_{\min}$ as the union over the  $\cC_{\min}(\alp)$ for all all bi-infinite simple paths $\alpha$.
%
%
%
\ifArxiv
Note that  $\cC_{\min}$ may contain cuts of very different weight. 
Actually we might have $C,D \in \cC(\alp) \cap \cC_{\min}$
with $C \in \cC_{\min}(\alp)$, but  $D \notin \cC_{\min}(\alp)$. 
In such a case there must be another  bi-infinite simple path $\bet$ 
with  $D \in \cC(\alp) \cap \cC_{\min}(\bet)$ and $\abs{\del C} < \abs{\del D}$.

For example, let $\GG$ be the  subgraph of the infinite grid
$\Z \times \Z$ which is induced by the pairs $(i,j)$ satisfying  $j\in \{0,1\}$ or $i=0$ and $j\geq 0$. 
Let $\alp$ be the bi-infinite simple path with $i=0$ or $j=1$ and $i\geq 0 $ and let $\bet$ be the bi-infinite simple path defined by $j=0$. 
Then there are such cuts with $\abs{\del C} = 1$ and $\abs{\del D} = 2$, as depicted in \prettyref{fig:different_deltas}.


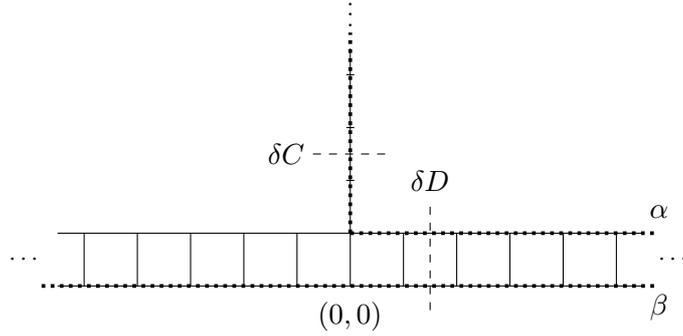
\begin{figure}[ht]
	\begin{center}
		\begin{tikzpicture}[scale = 0.7]
		\def\width{5};
		\def\height{4};
		
		\draw (-\width -0.5,1) -- (\width +0.5,1) ;
		\draw (-\width -0.5,0) -- (\width +0.5,0) ;
		\draw (0,0) -- (0,\height + 0.5) ;
		\node () at (-\width -1.1, 0.5) {\footnotesize{$\cdots$}};
		\node () at (+\width +1.1, 0.5) {\footnotesize{$\cdots$}};
		\node () at (0, \height + 1.3) {\footnotesize{$\vdots$}};
		\node (oo) at (0,0){};
		\node [below=2pt] {$(0,0)$};
		\foreach \x in {1,...,\width}
		{
			\draw (\x,1) -- (\x,0);
			\draw (-\x,1) -- (-\x,0);
		}
		\foreach \y in {2,...,\height}
		{
			\draw (-0.075,\y) -- (0.075,\y);
		}
		
		\node () at (1.5, 2.0) {$\delta D$};
		\draw[dashed] (1.5,1.5) -- (1.5,-0.5) ;
		\node () at (-1.2, 2.5) {$\delta C$};
		\draw[dashed] (-0.7,2.5) -- (0.7,2.5) ;

		\draw[dotted,line width=1.4pt] (-\width -0.8,0) -- (\width +0.8,0) ;

		\draw[dotted,line width=1.4pt] (0,1) -- (0,\height + 0.8) ;
		\draw[dotted,line width=1.4pt] ( 0,1) -- (\width +0.8,1) ;
		\node () at (\width +0.8, 1.4) {$\alpha$};
		\node () at (\width +0.8, -0.4) {$\beta$};
		
		\end{tikzpicture}
	\end{center}
	\caption[]{
		The subgraph of the grid $\Z \times \Z$ induced by the pairs $(i,j)$ satisfying  $j\in \{0,1\}$ or $i=0$ and $j\geq 0$. Here we have $D \in \cC(\alp) \cap \cC_{\min}$ but $D \notin \cC_{\min}(\alp)$.}\label{fig:different_deltas}
\end{figure}
\fi
\ifArxiv
Let us prove some bounds on weight and diameter of minimal cuts:

\begin{lemma}\label{lem:diameter_mincuts}
	Let $\Gamma$ be $k$-triangulable and let $d$ denote the degree of $\Gamma$. Then for every $C \in \cC_{\mathrm{min}}$ we have 
	\begin{enumerate}
		\item $\vert \vecdel C\vert \leq d^{3k+2}$ and 
		\item $\diam (\beta C)\leq \frac{3k}{2} d^{3k+2}$. 
	\end{enumerate}
\end{lemma}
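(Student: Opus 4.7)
My plan is to handle the two parts separately: part (ii) follows immediately from part (i) by Lemma~\ref{lem:lemma6_icalp93}, which already gives $\diam(\beta C) \leq \frac{3k}{2}\abs{\vecdel C}$, so the work concentrates entirely on proving (i), namely $\abs{\vecdel C} \leq d^{3k+2}$.

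For (i), I would exploit minimality: by hypothesis $C \in \cC_{\mathrm{min}}(\alpha)$ for some bi-infinite simple path $\alpha$, so it suffices to exhibit \emph{any} cut $D \in \cC(\alpha)$ satisfying $\abs{\vecdel D} \leq d^{3k+2}$. The candidate $D$ I would take is a connected component of $\Gamma - B_{v_0}(r)$ for a vertex $v_0 \in \beta C$ and radius $r := \diam(\beta C)$. By Lemma~\ref{lem:p65_ms85}, any such component satisfies $\diam(\partial D) \leq 3k$ \emph{independently of $r$}, so $\partial D$ lives in a single ball of radius $3k$, which in a graph of degree $d$ contains at most $1+d+\cdots+d^{3k} \leq d^{3k+1}$ vertices; then $\abs{\vecdel D} \leq d\cdot\abs{\partial D} \leq d^{3k+2}$.

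To justify that $D$ actually lies in $\cC(\alpha)$, I would argue that because $\alpha$ is simple and $\vecdel C$ is finite, the path $\alpha$ crosses between $C$ and $\Comp{C}$ only finitely often, and combined with $\abs{\alpha\cap C}=\abs{\alpha\cap\Comp{C}}=\infty$ this forces the two tails of $\alpha$ (as its index tends to $\pm\infty$) to lie eventually on opposite sides of $C$. Since $\beta C \subseteq B_{v_0}(r)$, the set $B_{v_0}(r)$ separates $C\setminus B_{v_0}(r)$ from $\Comp{C}\setminus B_{v_0}(r)$ in $\Gamma$, so the two tails end up in distinct components of $\Gamma-B_{v_0}(r)$; choosing $D$ to be the component containing one of them yields $D\in\cC(\alpha)$. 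The complement of $D$ is connected (making $D$ a genuine cut) for the routine reason that every other component of $\Gamma-B_{v_0}(r)$ is adjacent in $\Gamma$ to the connected set $B_{v_0}(r)$.

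The main subtlety I anticipate is an apparent circularity: the radius $r=\diam(\beta C)$ is defined in terms of $C$ itself, yet Lemma~\ref{lem:p65_ms85}'s bound of $3k$ on $\diam(\partial D)$ is uniform in $r$, which is precisely what lets the final bound on $\abs{\vecdel D}$ depend only on $k$ and $d$. The second delicate point is the ``opposite sides'' claim for the two tails, which would fail if $\alpha$ were allowed to oscillate across $\vecdel C$ infinitely often~-- simplicity of $\alpha$ together with finiteness of $\vecdel C$ rules this out by a short counting argument on how many edges of $\vecdel C$ the simple path can use.
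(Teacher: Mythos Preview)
Your proof is correct and takes essentially the same approach as the paper: produce a competing cut $D \in \cC(\alpha)$ as a connected component of the complement of a ball containing $\beta C$, apply Lemma~\ref{lem:p65_ms85} to get $\diam(\partial D)\leq 3k$, convert this to $\abs{\vecdel D}\leq d^{3k+2}$ via the degree, invoke minimality for (i), and then Lemma~\ref{lem:lemma6_icalp93} for (ii). The only cosmetic differences are that the paper centers its ball at the identity rather than at $v_0\in\beta C$ (equivalent by vertex-transitivity of the Cayley graph) and is considerably terser about why the component $D$ actually splits~$\alpha$.
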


\begin{proof}
	Let $C$ be a connected component of $\Gamma - B(m)$ for any $m \in \N$. By 
	\prettyref{lem:p65_ms85} we have $\diam(\partial C)\leq 3k$. Hence, $\vert\vecdel C\vert \leq  \sum_{i=1}^{\diam(\partial C)} d^i \leq d^{3k+2}$.
	Since any bi-infinite path which can be split by some cut into two infinite paths also can be 
	split into two infinite part by a cut which is a connected component of $\Gamma - B(m)$ 
	for some $m \in \N$ (just make $m$ large enough that the boundary of the  cut splitting the path lies entirely within $B(m)$), we know that $\vert\vecdel C\vert \leq d^{3k+2}$ also for 
	every minimal cut $C$.
	By Lemma \ref{lem:lemma6_icalp93} it follows that 
	$\diam (\beta C)  \leq \frac{3k}{2} \vert\vecdel C\vert
	\leq \frac{3k}{2}d^{3k+2}$
	for every minimal cut $C$.
\end{proof}

\else
Since every bi-infinite path can be split by some cut which is a connected component of $\Gamma - B(m)$ for some $m \in \N$, by \prettyref{lem:p65_ms85} and \ref{lem:lemma6_icalp93}, we easily obtain  the following bounds. 

\begin{lemma}\label{lem:diameter_mincuts}
	Let $\Gamma$ be $k$-triangulable and let $d$ denote the degree of $\Gamma$. Then for every $C \in \cC_{\mathrm{min}}$ we have 
$\vert \vecdel C\vert \leq d^{3k+2}$ and
$\diam (\beta C)\leq \frac{3k}{2} d^{3k+2}$. 
\end{lemma}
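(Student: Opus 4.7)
The plan is to show first that every minimal cut is no heavier than some ``ball-complement'' component, and then to combine this with the two already-proven lemmas (\prettyref{lem:p65_ms85} and \prettyref{lem:lemma6_icalp93}).

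First I would show a reduction: for any bi-infinite simple path $\alpha$ with $\cC(\alpha)\neq\emptyset$, one can find a splitting cut of $\alpha$ which is a connected component of $\Gamma - B(m)$ for some $m\in\N$. Given any $C_0\in\cC(\alpha)$, choose $m$ large enough that $\beta C_0\subseteq B(m)$; then all edges of $\Gamma-B(m)$ lie entirely in $C_0$ or entirely in $\Comp{C_0}$. Because $\alpha$ meets both $C_0$ and $\Comp{C_0}$ infinitely often, it must leave $B(m)$ and enter it again infinitely often on each side, so some connected component $C'$ of $\Gamma-B(m)$ contains infinitely many vertices of $\alpha$ while its complement (which contains $B(m)$ and therefore all other such components, each reachable through $B(m)$) also meets $\alpha$ infinitely often. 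Hence $C'\in\cC(\alpha)$, and $C'$ and $\Comp{C'}$ are both connected, so $C'$ is an honest cut.

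Next I would bound $\vert\vecdel C'\vert$. By \prettyref{lem:p65_ms85} we have $\diam(\partial C')\leq 3k$, so $\partial C'$ lies inside a ball of radius $3k$ around any one of its vertices and hence contains at most $1+d+\dots+d^{3k}\leq d^{3k+1}$ vertices. Every edge of $\vecdel C'$ has its source in $\partial C'$ and each vertex has degree $\leq d$, giving $\vert\vecdel C'\vert\leq d\cdot d^{3k+1}=d^{3k+2}$.

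Now the conclusion is immediate. If $C\in\cC_{\min}$, then by definition $C\in\cC_{\min}(\alpha)$ for some bi-infinite simple path $\alpha$. The previous two steps produce a cut $C'\in\cC(\alpha)$ with $\vert\vecdel C'\vert\leq d^{3k+2}$, and minimality of $C$ in $\cC(\alpha)$ forces $\vert\vecdel C\vert\leq \vert\vecdel C'\vert\leq d^{3k+2}$, which is (i). Finally, applying \prettyref{lem:lemma6_icalp93} to $C$ itself gives $\diam(\beta C)\leq \tfrac{3k}{2}\vert\vecdel C\vert\leq \tfrac{3k}{2}d^{3k+2}$, which is (ii).

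The only subtle step is verifying the reduction, and specifically checking that $\Comp{C'}$ is connected: this is where one needs to observe that each component of $\Gamma-B(m)$ is adjacent to $B(m)$ (otherwise it would have been part of a strictly larger component of $\Gamma$), so all components of $\Gamma-B(m)$ other than $C'$, together with the connected set $B(m)$, form a connected complement. Once that is in hand, the counting bound and the two earlier lemmas do the rest of the work mechanically.
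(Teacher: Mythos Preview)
Your proof is correct and follows essentially the same route as the paper: reduce to a connected component of $\Gamma - B(m)$, invoke \prettyref{lem:p65_ms85} to bound $\diam(\partial C')$, count vertices/edges to get $\vert\vecdel C'\vert\leq d^{3k+2}$, use minimality, and finish with \prettyref{lem:lemma6_icalp93}. You actually supply more detail than the paper does (in particular the connectedness of $\Comp{C'}$, which the paper leaves implicit).

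One small wording slip: a \emph{simple} bi-infinite path cannot ``leave $B(m)$ and enter it again infinitely often'', since $B(m)$ is finite and each vertex is visited at most once; what you want is that each of the two tails of $\alpha$ eventually stays outside $B(m)$, and since all edges of $\Gamma-B(m)$ lie entirely in $C_0$ or entirely in $\Comp{C_0}$, one tail is trapped in a single component $C'\subseteq C_0$ (and the other in some component of $\Comp{C_0}$). The conclusion you draw from this is unaffected.
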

\fi
%
%
%
	Two cuts $C$ and $D$ are called \emph{nested}, if one of the four inclusions
	$C\sse D$, $C\sse \Comp{D}$, $\Comp{C}\sse D$ or $\Comp{C}\sse\Comp{D}$ holds.
%
By \prettyref{lem:diameter_mincuts}, with $K=d^{3k + 3}$ for every 
bi-infinite simple path $\alp$ with $ \cC(\alp)\neq \es $ there exists some  cut $ C \in \cC(\alp) $ 
with $\vert \vecdel C\vert \leq K$. 
We fix this number $K$. 
For a cut $ C $ let $m(C)$ denote the number of $K$-cuts that are not nested with $C$. It follows from \cite{ThomassenW93} that $m(C)$ is always finite, see also \cite[Lem.\ 3.4]{DiekertW13}.
This allows us to define the set of \emph{optimal cuts}:
\begin{align*}
\Copt(\alp)&=  \set{C\in \cC_{\min}(\alp)} {m(C)\leq m(D) \text{ for all } D \in \cC_{\min}(\alp)},\\
\Copt&= \bigcup\set{\Copt(\alp)}{\alp \text{ is a  bi-infinite simple path} }.
\end{align*}

\begin{definition}
	A set $\cC\sse \cC(\Gamma)$ of cuts is called a \emph{tree set}, if $\cC$ is pairwise nested, closed under complementation and for each $C,D\in\cC$ the set $\set{E \in \cC}{C\sse E\sse D}$ is finite.
\end{definition}

\begin{proposition}[\cite{DiekertW13}]\label{prop:opt_nested}
	$\Copt$ is a tree set.
\end{proposition}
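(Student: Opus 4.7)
The plan is to establish the three defining properties of a tree set for $\Copt$ in turn: closure under complementation, the finite-interval condition, and pairwise nestedness.

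Closure under complementation is immediate from symmetry. The involution $e \mapsto \ov e$ restricts to a bijection between $\vecdel C$ and $\vecdel \Comp{C}$, so both cuts have the same weight. Nestedness is visibly symmetric under complementation of either argument, so $m(C) = m(\Comp{C})$. Finally, a bi-infinite simple path $\alpha$ is split by $C$ iff it is split by $\Comp{C}$. Hence each $\Copt(\alpha)$, and therefore $\Copt$, is closed under $C \mapsto \Comp{C}$.

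For the finite-interval condition, suppose $C \subseteq E \subseteq D$ with $C, D, E \in \Copt$. Pick any $v_0 \in C$ and $u_0 \in \Comp{D}$; then automatically $v_0 \in E$ and $u_0 \notin E$, so $\beta E$ meets every path from $v_0$ to $u_0$. Since $E$ is an optimal cut we have $\vert\vecdel E\vert \leq K$ and, by \prettyref{lem:diameter_mincuts}, $\diam(\beta E) \leq \tfrac{3k}{2} K$. Hence $\beta E$ is contained in a ball of bounded radius whose centre lies on a fixed geodesic from $v_0$ to $u_0$. Local finiteness of $\Gamma$ then leaves only finitely many candidates for $\beta E$, and each such boundary admits only finitely many extensions to a $K$-cut $E$.

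Pairwise nestedness is the heart of the proposition, and the plan is to argue by contradiction. Suppose $C \in \Copt(\alpha)$ and $D \in \Copt(\beta)$ are not nested, so all four corner sets $C^{\pm} \cap D^{\pm}$ (where $C^{+1}=C$, $C^{-1}=\Comp{C}$, and similarly for $D$) are non-empty. The key tool is the submodular inequality
\[\vert\vecdel X\vert + \vert\vecdel Y\vert \;\geq\; \vert\vecdel(X \cap Y)\vert + \vert\vecdel(X \cup Y)\vert,\]
applied to the pairs $(C, D)$ and $(C, \Comp{D})$. Using $\vert\alpha \cap C\vert = \vert\alpha \cap \Comp{C}\vert = \infty$, a short case analysis on which corners are met infinitely often by $\alpha$ produces a corner-cut or union-cut $C'$ that still splits $\alpha$ and satisfies $\vert\vecdel{C'}\vert \leq \vert\vecdel C\vert$. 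Minimality of $C$ in $\cC(\alpha)$ forces equality, so $C' \in \cC_{\min}(\alpha)$.

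The final step—and the main obstacle—is to verify $m(C') < m(C)$, which contradicts $C \in \Copt(\alpha)$. By construction $C'$ is contained in $D$ or in $\Comp{D}$, so $C'$ is nested with $D$, whereas $D$ was not nested with $C$; thus $D$ is an element of the set $\{F : F$ is a $K$-cut not nested with $C\}$ that is not in the analogous set for $C'$. It remains to show the reverse inclusion, i.e.\ that any $K$-cut $F$ non-nested with $C'$ is already non-nested with $C$. The plan is a second application of the submodular inequality, this time to $(C',F)$: were $F$ nested with $C$, one could derive a cut splitting $\alpha$ with strictly smaller weight than $\vert\vecdel{C'}\vert = \vert\vecdel C\vert$, contradicting minimality. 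This is the delicate bookkeeping step, standard in the Dunwoody/Dicks-Dunwoody framework and explicitly carried out in \cite{DiekertW13}; the rest of the proof is a matter of organising the case distinctions.
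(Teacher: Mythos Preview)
The paper does not prove this proposition at all; it simply imports it from \cite{DiekertW13}. So there is nothing to compare your argument against in this paper, and the relevant question is whether your sketch would actually go through.

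Your treatment of complementation and of the finite-interval condition is fine. The gap is in the nestedness argument, specifically the last paragraph. Your claim that ``any $K$-cut $F$ non-nested with $C'$ is already non-nested with $C$'' is false in general: take $F$ nested with $C$ (say $F\subseteq C$) but not nested with $D$; then $F$ need not be nested with the corner $C'=C\cap D$, since none of the four nestedness inclusions between $F$ and $C\cap D$ is forced. Your proposed remedy, applying submodularity to $(C',F)$, does not help either: you would need a corner of $(C',F)$ to split $\alpha$ with weight strictly below $\vert\vecdel C\vert$, but $\vert\vecdel F\vert$ is only bounded by $K$, not by $\vert\vecdel C\vert$, and there is no reason such a corner splits $\alpha$.

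The argument that actually works (and is the one carried out in \cite{DiekertW13}) rests on a different combinatorial lemma: if a cut $E$ is nested with \emph{both} $C$ and $D$, then $E$ is nested with every corner of $C$ and $D$. This gives only $m(C')\leq m(C)+m(D)$ a priori, which is why one cannot conclude by looking at a single corner and a single path. One has to treat a pair of opposite corners together with \emph{both} bi-infinite paths $\alpha$ and $\beta$, arranging via submodularity that one corner lies in $\cC_{\min}(\alpha)$ and the opposite one in $\cC_{\min}(\beta)$, and then exploit that $C,\Comp C,D,\Comp D$ are nested with every corner while $C$ and $D$ are not nested with each other. There is also a secondary issue you glossed over: a corner $C\cap D$ need not be connected (nor have connected complement), so it need not be a cut in the sense of \prettyref{def:cut}; one has to pass to suitable connected components first.
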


\begin{definition}\label{def:cuts_equiv}
	Let $\cC$ be a tree set. We can now define the following relation:
\[	C\sim D :\Longleftrightarrow C=D \text{ or } (\Comp{C} \ssnq  D \text{ and }
	E\in \cC , \,\Comp{C} \ssnq  E \sse D\RA{} E=D).\]
\end{definition}
Indeed, $\sim$ is an equivalence relation~-- see \eg\ \cite{DicksD89}.
The intuition behind this definition is: We consider $\cC$ as edge set of a graph, and define two edges to be incident to the same vertex, if no other edge lies ``between'' them.

\begin{definition}
	Let $\cC$ be a tree set and let  $T(\cC)$ denote the graph defined by
	\begin{align*}
	V(T(\cC))&=\set{[C]}{C \in \cC},&
	E(T(\cC))&= \cC.
	\end{align*}
	The incidence maps are defined by $s(C)=[C]$ and $t(C)=[\Comp{C}]$. The involution $\Comp{C}$ is defined by the complementation $\Comp{C} = V(\GG)\sm C$; hence, we do not need to change notation.
\end{definition}
The directed edges are in canonical bijection with the pairs $([C],[\Comp{C}])$. Indeed, let  $C\sim D$ and $\Comp{C}\sim \Comp{D}$. It follows $C=D$ because otherwise $C\ssnq \Comp{D}\ssnq C$.
Thus, $T(\cC)$ is an undirected graph without self-loops and multi-edges. Indeed, $T(\cC)$ is a tree \cite{DicksD89}. 


%
%

\begin{theorem}[\,{\!\cite[Thm.\ 5.9]{DiekertW13}}]\label{thm:DiekertW13}\label{thm:finite_vertex_stabilizers}
	Let $\Gamma$ be a connected, $k$-triangulable, locally finite graph. 
	Let a group $G$ act on $\Gamma$ such that $G\bs \Gamma$ is finite and each 
	node stabilizer $G_v$ is finite. 
	Then $G$ acts on the tree $T(\Copt)$ such that all vertex and edge 
	stabilizers are finite and $G\bs T(\Copt)$ is finite.
\end{theorem}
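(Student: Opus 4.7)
The plan is to break the theorem into four sub-claims and tackle them in order: (a) $\Copt$ is $G$-invariant so that $G$ acts on $T(\Copt)$; (b) every edge stabilizer is finite; (c) every vertex stabilizer is finite; (d) $G\bs T(\Copt)$ is finite.

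For (a) I would observe that $G$ acts on $\Gamma$ by graph automorphisms, hence preserves the weight $\abs{\vecdel C}$ of cuts and sends bi-infinite simple paths to bi-infinite simple paths. Thus $g\cC(\alpha)=\cC(g\alpha)$ and $g\cC_{\min}(\alpha)=\cC_{\min}(g\alpha)$, and since $m(C)$ depends only on inclusion of $K$-cuts, it is $G$-invariant; taking unions over $\alpha$ gives $g\Copt=\Copt$. The relation $\sim$ of \prettyref{def:cuts_equiv} is defined purely by inclusions among cuts in $\Copt$, hence is $G$-equivariant, so $G$ acts on the vertex and edge sets of $T(\Copt)$ preserving incidences.

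For (d), \prettyref{lem:diameter_mincuts} yields $\abs{\vecdel C}\leq K=d^{3k+3}$ and $\diam(\beta C)\leq \tfrac{3k}{2}K$ for every $C\in\Copt$. I fix a finite set $S$ of $G$-orbit representatives in $V(\Gamma)$; every $C\in\Copt$ is $G$-equivalent to one whose boundary $\beta C$ meets $S$ and thus lies in the ball $B$ of fixed radius around $S$, which is finite by local finiteness of $\Gamma$. Only finitely many cuts of weight $\leq K$ have boundary inside $B$ (the edges of $\vecdel C$ come from the finite edge set incident to $B$, and each $\vecdel C$ determines $C$ up to the choice of a side), so there are finitely many $G$-orbits of edges of $T(\Copt)$; since every vertex of $T(\Copt)$ is the source of some edge, there are also finitely many vertex orbits.

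For (b), $\Stab(C)$ permutes the nonempty finite set $\beta C$: the image in $\Sym{\beta C}$ is finite, and the kernel fixes every $v\in\beta C$ and so is contained in the finite node stabilizer $G_v$, hence is finite. For (c) I would reduce to showing that each equivalence class $[C]$ is finite: then $\Stab([C])$ has finite image in $\Sym{[C]}$ and kernel contained in $\Stab(D)$ for any chosen $D\in[C]$, hence is finite by (b). The hard part will therefore be proving local finiteness of $T(\Copt)$. The strategy is to show that any $D\sim C$ with $D\neq C$ has $\beta D$ lying in a bounded neighborhood of $\beta C$: because $D\in\Copt$ is a $K$-cut with $\Comp C\subsetneq D$ and no cut of $\Copt$ strictly between, $D$ must either be not nested with, or share boundary edges with, one of the finitely many $K$-cuts near $C$ (the same ones used to prove $m(C)<\infty$ via \cite{ThomassenW93}). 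Combined with local finiteness of $\Gamma$ and the fact that only finitely many cuts of weight $\leq K$ have boundary in a given finite vertex set, this will give $\abs{[C]}<\infty$ and complete the proof.
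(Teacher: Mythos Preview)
The paper does not prove this theorem at all: it is quoted verbatim from \cite[Thm.~5.9]{DiekertW13} and used as a black box. So there is no ``paper's own proof'' to compare against; I can only comment on whether your outline would succeed as a self-contained proof of the cited result.

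Parts (a), (b) and (d) are fine and are exactly the standard arguments. The real content is (c), and here your sketch has a genuine gap. You want to show that every $D\sim C$ has $\beta D$ contained in a fixed ball around $\beta C$, and you justify this by saying that $D$ ``must either be not nested with, or share boundary edges with, one of the finitely many $K$-cuts near $C$''. But $D\in\Copt$, and by \prettyref{prop:opt_nested} every cut in $\Copt$ is nested with every other, so the ``not nested'' alternative never occurs; and you give no reason why $\vecdel D$ must share an edge with any cut whose boundary is near $\beta C$. All you know is $\Comp D\subsetneq C$ with nothing of $\Copt$ strictly between~--- that places no a~priori bound on how deep inside $C$ the set $\Comp D$ sits. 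What is actually needed is an argument of the following shape: using that $G\bs\Gamma$ is finite and $\Copt$ is $G$-invariant and nonempty, there is a radius $r$ such that every vertex of $\Gamma$ lies within $r$ of $\beta E$ for some $E\in\Copt$; then, if $\beta D$ were farther than $r+R$ from $\beta C$, one could find such an $E$ and, after replacing $E$ by $\Comp E$ if necessary and using nestedness, produce a cut of $\Copt$ strictly between $\Comp C$ and $D$, contradicting $D\sim C$. This is essentially what \cite{DiekertW13} does (and is also the mechanism behind \prettyref{lem:classcomplete} in the present paper), but it needs to be spelled out~--- the appeal to \cite{ThomassenW93} alone does not give it.
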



\section{Bounds on the structure tree}\label{sec:bounds}

In order to prove our main result, we have to show that there exists a ``small'' graph of groups together with a ``small'' isomorphism.  For constructing such a graph of groups, we start with the structure tree and bound the size of the equivalence classes and the diameter of the boundaries of the cuts in one equivalence class. 
As before $\Gamma$ is the Cayley graph of a context-free group $G$.

\vspace{-3mm}
\paragraph*{Complete cut sets.}
By \prettyref{prop:opt_nested} and \prettyref{thm:finite_vertex_stabilizers}, $\Copt$ is a tree set of cuts on which $G$ acts with finitely many orbits such that the vertex stabilizers $G_{[C]} = \set{g \in G}{gC\sim C}$ of the structure tree are finite. We call a set of cuts with these properties a \emph{complete cut set}.

\vspace{-3mm}
\paragraph*{Avoiding edge inversion.} We aim to construct a graph of groups as described in \prettyref{sec:GX} from the structure tree $T(\Copt)$.
However, if the action of $G$ on $T(\Copt)$ is with edge inversion, the construction cannot be applied directly. Instead, we switch to a subdivision $\wt T(\cC)$ of $T(\cC)$ by
putting a new vertex in the middle of every edge which is inverted (in particular, $V(T(\cC)) \sse V(\wt T(\cC))$). 
Formally, $\wt T(\cC)$ is defined as follows: for every edge $C$ of $T(\Copt)$ with $gC = \Comp C$ we remove $C$ and $\ov C$ and instead add a new vertex $\smash{ v_{\{C,\ov C\}}}$ together with edges $C_1,\ov C_1,C_2,\ov C_2$ with $gC_1 = \ov C_2$, $gC_2 = \ov C_1$ and $s(C_1) = [C]$, $t(C_1) = v_{\{C,\ov C\}}$, $s(C_2) = v_{\{C,\ov C\}}$, and $t(C_2) = [\ov C]$.

%
%

\vspace{-3mm}
\paragraph*{Reduced cut sets.} Given a complete cut set $\cC$, we obtain a graph of groups with finite vertex groups by taking the quotient $G\bs \wt T(\cC)$ with the procedure from \prettyref{sec:GX}. We aim to apply \prettyref{thm:1.4_dimacs96}, to bound the number of edges in this graph of groups. However, the graph of groups might not be reduced. In terms of the structure tree $T(\cC)$ this means that there are either vertices $[C]$ and $[\Comp{C}]$ which are not in the
same $G$-orbit and $G_{[C]} = G_C $ or that $G_{[C]} = G_C $ and there is some $g \in G$ with $gC = \Comp C$. Nevertheless, in this
case we can switch to a subset $\wt\cC\sse \cC$ such that still all vertex stabilizers are finite and the corresponding graph of groups is reduced: if there is some cut $C \in \cC$ with $G_{[C]} = G_C $ and either $[C] \not\in G\cdot \smallset{[\Comp{C}]}$ or  $\Comp C \in G \cdot\smallset{C}$, then we can replace $\cC$ by $\cC \setminus G\cdot\smallset{C,\Comp{C}}$ (in terms of the structure tree this means we collapse the respective edges). If there is no such $C \in \cC$ anymore, we have obtained a \emph{reduced} set of cuts $\cC$.
	The following lemmas are straightforward to verify.
	\begin{lemma}\label{lem:reduced_complete}
		Let $\cC$ be a complete cut set and let $\cC'$ be the reduced cut set obtained by the above procedure. Then $\cC'$ is also complete (\ie all vertex stabilizers are still finite).
\end{lemma}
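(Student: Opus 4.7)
The plan is to verify the three properties defining a complete cut set for $\cC'$: that it is a tree set, that $G$ acts on it with finitely many orbits, and that the vertex stabilizers in $T(\cC')$ are all finite. The first two conditions are essentially inherited from $\cC$. Since $\cC'\sse \cC$ is obtained by removing a single full $G$-orbit of a pair $\smallset{C,\Comp{C}}$, it is still pairwise nested, closed under complementation, and every interval $\set{E \in \cC'}{D\sse E\sse D'}$ is a subset of the corresponding (finite) interval in $\cC$; and $\cC'$ obviously still has finitely many $G$-orbits, being a union of some of the orbits of $\cC$.

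The heart of the argument is finiteness of the vertex stabilizers. Removing $G\cdot\smallset{C,\Comp{C}}$ from $\cC$ corresponds, on the level of the structure tree, to contracting all edges of this orbit in $T(\cC)$. The first step is to show, using the hypothesis $G_{[C]}=G_C$, that the connected components of this $G$-invariant subforest are single undirected edges. Indeed, two edges $h_1C$ and $h_2C$ share a common vertex only if either $h_2^{-1}h_1\in G_{[C]}=G_C$, in which case $h_1C=h_2C$, or $h_2^{-1}h_1[C]=[\Comp{C}]$; the latter is ruled out outright in the case $[C]\notin G\cdot[\Comp{C}]$, and in the complementary case $\Comp{C}\in G\cdot C$ it forces $h_1C=\Comp{h_2C}$, so that $h_1C$ and $h_2C$ are still the same undirected edge. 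Hence each connected component of the contracted subforest consists of just one edge.

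It therefore suffices to analyse contraction of a single edge $C$. In the case $[C]\notin G\cdot[\Comp{C}]$ the new vertex stabilizer is $\langle G_{[C]},G_{[\Comp{C}]}\rangle$; since edge stabilizers are always contained in endpoint-vertex stabilizers, $G_{[C]}=G_C\sse G_{[\Comp{C}]}$, so this group equals $G_{[\Comp{C}]}$ and is finite by the completeness of $\cC$. In the case $\Comp{C}=gC$ for some $g\in G$, the new vertex stabilizer is $\langle G_{[C]},g\rangle$; from $gC=\Comp{C}$ one obtains $g^2\in G_C=G_{[C]}$ and $gG_{[C]}g^{-1}=G_{[\Comp{C}]}=G_{[C]}$, so this is a degree-$2$ extension of the finite group $G_{[C]}$ and hence finite. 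The main obstacle is getting the contraction analysis right so that the new equivalence classes of cuts really do correspond to the vertices produced by the contraction; once that is pinned down the lemma follows by induction on the number of orbits removed by the reduction procedure.
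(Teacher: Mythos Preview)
Your overall strategy---contract the orbit $G\cdot\smallset{C,\Comp C}$ in $T(\cC)$ and control the stabilizers of the resulting vertices---is sound, and the analysis of the second case ($\Comp C\in G\cdot C$) is correct. However, your key claim that the connected components of the subforest spanned by $G\cdot\smallset{C,\Comp C}$ are single undirected edges is \emph{false} in the first case. When checking whether $h_1C$ and $h_2C$ share a vertex you only consider $h_1[C]=h_2[C]$ and $h_1[C]=h_2[\Comp C]$, but you omit the possibility $h_1[\Comp C]=h_2[\Comp C]$. Nothing rules this out: the hypothesis is $G_{[C]}=G_C$, not $G_{[\Comp C]}=G_C$, and indeed $G_C$ may sit as a proper subgroup of $G_{[\Comp C]}$. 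In that situation there are $[G_{[\Comp C]}:G_C]$ distinct edges $gC$ (for $g$ ranging over coset representatives of $G_C$ in $G_{[\Comp C]}$) all attached to the vertex $[\Comp C]$.

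The fix is easy and your conclusion survives. In case~1 the component containing the edge $C$ is a \emph{star}: the vertex $[\Comp C]$ is the centre, the leaves are the translates $g[C]$ for $g\in G_{[\Comp C]}$, and each leaf has exactly one incident edge in the orbit because $G_{[C]}=G_C$ (this part of your argument is fine). The setwise stabilizer of such a star is $G_{[\Comp C]}$ itself---each leaf stabilizer $gG_{[C]}g^{-1}=gG_Cg^{-1}$ is already contained in $G_{[\Comp C]}$---so the new vertex stabilizer is still the finite group $G_{[\Comp C]}$, exactly as you asserted. The paper arrives at the same endpoint by a different bookkeeping: rather than working with stabilizers it exploits that $G$ acts freely on $\Gamma$, so finiteness of a vertex stabilizer is equivalent to finiteness of the equivalence class; it then writes down the new class explicitly as $\bigl([\Comp C]\cup\bigcup_{g\in G_{[\Comp C]}} g[C]\bigr)\setminus\bigcup_{g\in G_{[\Comp C]}}\smallset{gC,g\Comp C}$, visibly a finite union of finite sets.
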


\begin{proof}
	We only need to proof that still all vertex stabilizers (or equivalently all equivalence classes~-- since $G$ acts freely on $\Gamma$) are finite. Consider the structure tree  $T(\cC)$. If we remove some cut from  $\cC$ it means we collapse the respective edge. 
	
	Let $C \in \cC$ with $[C] \not\in G\cdot \smallset{[\Comp{C}]}$ and $G_{[C]} = G_C $ and consider the set of cuts $\cC \setminus G\cdot \smallset{C, \ov C}$. Thus, $[C] \cap G \cdot \smallset{C} = \smallset{C}$. Let $P$ the vertex in the structure tree with $\ov C \in P$. Removing $G \cdot\smallset{C, \ov C}$ amounts to replacing $P = [\ov C]$ by 
	\begin{align*}
	P' = \left([\Comp{C}] \cup \bigcup_{g\in G_{P}}g[C]\right) \setminus \bigcup_{g\in G_{P}}\oneset{gC, g \Comp C},
	\end{align*}
	which is also finite.
	
	Now, let $C \in \cC$ with $\ov C \in G\cdot \smallset{C}$ and $G_{[C]} = G_C $ and consider the cut set $\cC \setminus G\cdot \smallset{C} = \cC \setminus G\cdot \smallset{C, \ov C}$. Then we have $[C] \cap G \cdot \smallset{C} = \smallset{C}$ and $[\ov C] \cap G \cdot \smallset{\ov C} = \smallset{\ov C}$. Let $P$ the vertex in the structure tree with $\ov C \in P$. Removing $G \cdot\smallset{C}$ amounts to replacing $P = [C]$ by 
	$	P' = \left([C] \cup [\ov C]\right) \setminus\smallset{C, \ov C}$,
	which again is finite. 
\end{proof}

	\begin{lemma}\label{lem:reduced_edge_inversion}
		Let $\cC$ be a reduced cut set and let $\wt T(\cC)$ be the associated structure tree without edge inversion. Then the graph of group built on $G\bs \wt T(\cC)$ is reduced.
\end{lemma}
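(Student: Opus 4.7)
The plan is to unfold the definitions and verify the reducedness condition $G_y^y \neq G_{s(y)}$ on each edge $y$ of $Y = G\bs \wt T(\cC)$ having distinct endpoints. I will split the edges of $\wt T(\cC)$ into two types according to the construction: (a) the edges $C \in \cC$ whose orbit is not inverted by $G$, i.e.\ $\ov C \notin G\cdot C$, which are kept untouched in $\wt T(\cC)$; and (b) for each $G$-orbit of a cut $C$ with $\ov C \in G\cdot C$, the four new subdivision edges $C_1,\ov C_1, C_2, \ov C_2$ connecting $[C]$ and $[\ov C]$ through the midpoint $v_{\{C,\ov C\}}$.

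For type (a), an edge $y = C$ has source orbit $G\cdot [C]$, target orbit $G\cdot[\ov C]$ and edge stabilizer $G_C$. The only case to check is when $G\cdot [C] \neq G\cdot [\ov C]$, i.e.\ $[C] \notin G\cdot [\ov C]$. Here the image of $G_C$ inside the vertex group $G_{[C]}$ is just $G_C$ itself (possibly after a conjugation by a coset representative, which does not affect equality with $G_{[C]}$), so reducedness amounts to $G_C \subsetneq G_{[C]}$. But exactly this case is forbidden by the definition of a reduced cut set: if we had $G_C = G_{[C]}$ and $[C] \notin G\cdot[\ov C]$, the orbit $G\cdot\{C,\ov C\}$ would have been removed from $\cC$.

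For type (b), fix $C$ with $\ov C = g_0 C$ for some $g_0 \in G$. The subdivision vertex $v := v_{\{C,\ov C\}}$ has stabilizer $G_v$ equal to the set-wise stabilizer $G_{\{C,\ov C\}}$ of the unoriented edge, which contains $G_C$ as an index-$2$ subgroup (the coset $G_C g_0$ being the inverting coset). The edges $C_1$ and $\ov C_2$ lie in one $G$-orbit (since $g_0 C_1 = \ov C_2$), and $\ov C_1, C_2$ lie in the other; both orbits connect $G\cdot[C]$ and $G\cdot v$ in $Y$, which are distinct orbits because subdivision midpoints and original vertices are never identified by $G$. On the oriented edge $C_1$ the stabilizer is $G_{C_1} = G_C$ and the source group is $G_{[C]}$; since $\ov C \in G\cdot C$ and $\cC$ is reduced, we again have $G_C \subsetneq G_{[C]}$. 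On the reverse edge $\ov C_1$ the source group is $G_v = G_{\{C,\ov C\}}$ and the edge group is still $G_C$, which is a proper (index-$2$) subgroup of $G_v$. Thus the reducedness inequality holds on both orientations.

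The main (and only) obstacle is bookkeeping: one must be careful to distinguish $G$-orbits of vertices versus $G$-orbits of (oriented) edges when passing to the quotient, in particular to see that the two oriented edges $C_1$ and $C_2$ descend to the two orientations of a single unoriented edge of $Y$ between $G\cdot[C]$ and $G\cdot v$, and to recognize that the two pathological configurations ruled out by the definition of a reduced cut set are exactly the two configurations that would violate reducedness of the resulting graph of groups. Once this dictionary between the reduction of $\cC$ (on the combinatorial side) and the reduction of $\cG$ (on the Bass–Serre side) is in place, the verification reduces to the two inclusions displayed above.
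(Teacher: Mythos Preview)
Your argument is correct and follows essentially the same route as the paper's proof: both reduce to checking that (i) for an uninverted cut $C$ with $[C]\notin G\cdot[\ov C]$ one has $G_C\subsetneq G_{[C]}$, and (ii) for an inverted cut the edge group $G_C$ sits properly (with index~$2$) inside the midpoint stabilizer $G_{\{C,\ov C\}}$ and, by reducedness of $\cC$, properly inside $G_{[C]}$ as well. The paper phrases this as a proof by contradiction (assuming some edge $y$ has $G_y^y=G_{s(y)}$ and ruling out each location of $\iota s(y)$ and $\iota t(y)$), whereas you verify the inequalities directly edge by edge; the content is the same.
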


\begin{proof}
	Let $\cG$ be a graph of groups over $Y$ and let $\iota: V(Y) \to V (\wt T(\cC))$ and $\iota: E(Y) \to E (\wt T(\cC))$ be the choice of representative as in \prettyref{sec:GX}.
	Assume that $\cG$ is not reduced, \ie there is an edge $y \in E(Y)$ with $G_y^{y}= G_{s(y)}$ and $t(y) \neq s(y)$. Then $\iota s(y) \in V(T(\cC))$ (\ie $s(y)$ is not in the orbit of one of the additional vertices introduced on inverted edges) because otherwise the index of $G_y^y$ in $G_{s(y)}$ would be two. We distinguish the two cases that $\iota t(y)$ is one of these special vertices and one of the original vertices of $T(\cC)$.
	If $\iota t(y)$ is in $T(\cC)$, then $\iota y = C$ for some $C \in \cC$ and $[C] \not \in G\cdot \smallset{[\ov C]}$ and $G_C = G_{[C]}$, \ie $\cC$ is not reduced.
	
	On the other hand, let $\iota t(y) = v_{\{C,\ov C\}}$ for some $C \in \cC$. Then by the construction of $\wt T(\cC)$, we have $\ov C \in G \cdot \smallset{C}$. Since by assumption $G_C = G_{[C]}$, that means again that $\cC$ is not reduced.
\end{proof}

Let $\Xi$ be an upper bound on the order of finite subgroups of $G$ and let $\Theta$ be a bound on the number of undirected edges in a reduced graph of groups for $G$. Notice that by  \prettyref{prop:subgroupbound}, we have $\Xi \leq d^{12k + 10}$ and by  \prettyref{thm:1.4_dimacs96} we have $\Theta \leq d^{12k + 11}$ where $k$ is the triangulation constant and $d$ the degree of $\Gamma$. \ifArxiv\else The following lemma is straightforward to prove using the fact that every orbit of cuts in $\cC$ yields an edge in the graph of groups. \fi
\begin{lemma}\label{lem:subgroupbound} 
	Let $\cC$ be a reduced complete set of cuts and let $C \sim D\in \cC$. Then
\ifArxiv	\begin{enumerate}
		\item $\abs{\set{g\in G}{gD \sim C}}\leq \Xi $, and 
		\item $\abs{[C]}\leq \bigR$.\label{classbound} 
	\end{enumerate}
\else
we have $\abs{\set{g\in G}{gD \sim C}}\leq \Xi $ and 
 $\abs{[C]}\leq \bigR$.
\fi
\end{lemma}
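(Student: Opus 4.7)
The plan is to bound both quantities by passing to the graph of groups $\cG$ carried by $Y = G\bs\wt T(\cC)$. By \prettyref{lem:reduced_complete} the cut set $\cC$ remains complete after the reduction procedure, and by \prettyref{lem:reduced_edge_inversion} the graph of groups on $Y$ is reduced; hence \prettyref{prop:subgroupbound} and \prettyref{thm:1.4_dimacs96} apply and yield the ambient bounds $\Xi$ and $\Theta$.

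For part (i), I would observe that $\set{g\in G}{gD\sim C}$ is either empty or a left coset of the stabilizer $G_{[D]} = \set{g\in G}{gD\sim D}$. Indeed, if $g_0 D\sim C$, then $gD\sim C$ is equivalent to $g_0^{-1}g\in G_{[D]}$, so the set has cardinality exactly $\abs{G_{[D]}}$. Since $\cC$ is complete, $G_{[D]}$ is a finite subgroup of $G$ and hence $\abs{G_{[D]}}\leq \Xi$.

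For part (ii), I would count the directed edges of $\wt T(\cC)$ incident to $[C]$ in two steps. First, each cut $D\in[C]$ contributes exactly one directed edge with source $[C]$ in $\wt T(\cC)$: if $D$ is not inverted, then $D$ itself; if $D$ is inverted, then the half-edge $D_1$ introduced in the subdivision. (One checks both subcases $[D]\neq[\ov D]$ and $[D]=[\ov D]$.) So the number of directed edges of $\wt T(\cC)$ with source $[C]$ equals $\abs{[C]}$. Second, two directed edges with source $[C]$ lie in the same $G$-orbit if and only if they lie in the same $G_{[C]}$-orbit, so these orbits biject with the directed edges of $Y$ based at the image $P$ of $[C]$. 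There are at most $2\Theta$ such directed edges in $Y$ (twice the undirected bound from \prettyref{thm:1.4_dimacs96} applied to $\cG$), and by orbit-stabilizer each orbit has size $\abs{G_{[C]}}/\abs{G_e}\leq \abs{G_{[C]}}\leq \Xi$. Multiplying the two bounds gives $\abs{[C]}\leq 2\Theta\cdot\Xi = \bigR$.

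The piece I expect to be fiddliest is the claim that every $D\in[C]$ contributes precisely one directed edge with source $[C]$ to $\wt T(\cC)$; the corner cases involve inverted cuts, and particularly cuts with $[D]=[\ov D]$ (self-loops), where one has to carefully unpack how the subdivision replaces such an edge by the half-edges $D_1,\ov D_1,D_2,\ov D_2$. The remainder is essentially a double application of orbit-stabilizer together with the reduction lemmas.
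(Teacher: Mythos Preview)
Your argument is correct and rests on the same two ingredients as the paper's proof: at most $2\Theta$ $G$-orbits of edges, and at most $\Xi$ elements of each orbit landing in $[C]$. The paper packages part~(ii) more economically by staying in $\cC$ rather than in $\wt T(\cC)$: it observes that passing from $T(\cC)$ to $\wt T(\cC)$ does not increase the number of $G$-orbits of directed edges, so $\abs{G\bs\cC}\leq 2\Theta$, and then part~(i) (applied to a representative $D\in[C]$ of each orbit) gives $\abs{GD\cap[C]}\leq\Xi$ directly. Your route through the edges of $\wt T(\cC)$ incident to $[C]$ is equivalent but carries more bookkeeping.

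One remark on the ``fiddly'' case you anticipate: the situation $[D]=[\ov D]$ never arises. From the definition of $\sim$ one has $D\sim\ov D$ only if $D=\ov D$ or $\ov D\subsetneq\ov D$, both impossible; the paper records this as ``$T(\cC)$ is an undirected graph without self-loops.'' So the only subdivision case you need is the ordinary one with $[D]\neq[\ov D]$, and there your count (one outgoing half-edge at $[C]$ per $D\in[C]$) goes through without further case analysis.
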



\begin{proof}
	Let $g \in G$ with $gD \sim C$ and let $E\in [C]$. Then, $gE \sim gC \sim gD \sim C$. Thus, $gE \in[C]$ and so  $g \in G_{[C]}$. By \prettyref{thm:finite_vertex_stabilizers}, $G_{[C]}$ is finite; thus, its size is bounded by $\Xi$. This shows the first point.
	
	For the second bound observe that, by the definition of $\Theta$ and by \prettyref{lem:reduced_edge_inversion}, 	
	there are at most $2\Theta$ different $G$-orbits of (directed) edges of $\wt T(\cC)$ (the 2 comes from the fact $\Theta$ counts undirected edges). Recall that  $\wt T(\cC)$ originated from $T(\cC)$ by adding new vertices in the middle of edges $C$ with $gC=\ov C$ for some $g \in G$. Thus, adding new vertices on these edges does not change the number of $G$-orbits of edges. Hence, $\abs{G \bs \cC} \leq 2 \Theta$.
	Now, by the first point, for every $G$-orbit there are at most $\Xi$ cuts in $[C]$. 
\end{proof}
	
	\begin{lemma}\label{lem:classcomplete}
		Let $\cC$ be a tree set of cuts and let $G$ act on $\cC$. Let $C \in \cC$ and  $C \in P\sse [C]$. 
		
		Then $P \neq [C]$  if, and only if,
		there is some $ E \in  [C] \setminus P $ with $d(\partial E, \bigcup_{D\in P} \partial D) \leq 1$. 
	\end{lemma}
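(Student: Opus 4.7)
The direction $(\Leftarrow)$ is immediate: the existence of some $E \in [C] \setminus P$ already witnesses that $[C] \setminus P$ is non-empty, so $P \neq [C]$.

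For the direction $(\Rightarrow)$, my plan is to establish the equivalent statement that the auxiliary graph $H$ on vertex set $[C]$, whose edges join pairs $D, E \in [C]$ satisfying $d(\partial D, \partial E) \leq 1$, is connected. Once connectedness of $H$ is known, any proper subset $P \subsetneq [C]$ with $C \in P$ must meet $[C] \setminus P$ along an edge of $H$, which yields the required witness $E$. Conversely, iterating the conclusion of the lemma starting from $P = \smallset{C}$ exhausts the connected component of $C$ in $H$, so connectedness of $H$ is exactly what is needed.

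To prove connectedness of $H$, I would first extract from \prettyref{def:cuts_equiv} the basic fact that, for any two distinct $D, D' \in [C]$, one has $\Comp{D} \subsetneq D'$, and consequently the family of complements $\Comp{D}$ (for $D \in [C]$) is pairwise disjoint. Thus the cuts in $[C]$ label the ``peripheral components'' branching off from a common ``central'' region $R = \bigcap_{D \in [C]} D$, and each inner boundary $\partial D$ lies at the interface between $D$ and $\Comp{D}$. I would then argue that the boundaries $\partial D$ and $\partial D'$ of equivalent cuts are always linked by short paths in $\Gamma$ either via $R$ or via a chain of intermediate boundaries $\partial D''$ with $D'' \in [C]$.

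The heart of the argument is a proof by contradiction: pick $D_0 \in P$ and $E_0 \in [C] \setminus P$ minimizing $d(\partial D_0, \partial E_0)$, assume this distance is at least $2$, and consider a shortest path in $\Gamma$ from $\partial D_0$ to $\partial E_0$. Using the pairwise disjointness above together with the tree-set properties of $\cC$, I plan to exhibit a cut $F \in \cC$ with $\Comp{D_0} \subsetneq F \subsetneq E_0$, which contradicts $D_0 \sim E_0$ via \prettyref{def:cuts_equiv}. The principal obstacle lies in actually producing $F$ inside $\cC$: rather than being able to write it down explicitly from the geodesic data, one likely has to argue structurally via the tree $T(\cC)$ that some cut of $\cC$ must separate an intermediate vertex of the geodesic from $\Comp{D_0}$ or from $\Comp{E_0}$, and then verify that this cut fits strictly between $\Comp{D_0}$ and $E_0$.
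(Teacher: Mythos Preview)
Your reformulation via connectedness of the auxiliary graph $H$ is perfectly fine and equivalent to the statement. The genuine gap is that your proposed argument never uses the hypothesis that $G$ acts on $\cC$~--- and in the paper's setting this means that $G$ acts \emph{vertex-transitively} on $\Gamma$, so that through every vertex there is some cut of $\cC$. Without this, the lemma is simply false: take $\Gamma$ to be the bi-infinite path $\Z$, and let $\cC=\{C_0,\Comp{C_0},C_{10},\Comp{C_{10}}\}$ with $C_n=\{m:m\ge n\}$. This is a tree set, one checks $C_0\sim\Comp{C_{10}}$, and $\partial C_0=\{0\}$, $\partial\Comp{C_{10}}=\{9\}$ are at distance $9$. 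So with $P=\{C_0\}$ there is no $E\in[C_0]\setminus P$ with $d(\partial E,\partial C_0)\le 1$. Your plan to ``argue structurally via $T(\cC)$'' cannot succeed, because $T(\cC)$ contains no information about distances in $\Gamma$ beyond what the cuts in $\cC$ already record.

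The paper's proof proceeds differently and uses transitivity in an essential way. Rather than seeking a contradiction to $D_0\sim E_0$, it directly manufactures a new $E\in[C]\setminus P$ close to $\bigcup_{D\in P}\partial D$: one first locates a vertex $v\in\bigcap_{D\in P}D$ at distance exactly~$1$ from $\bigcup_{D\in P}\partial D$ (this exists because $\bigcap_{D\in P}D$ contains $\Comp{E}$ for any $E\in[C]\setminus P$, hence is not contained in $\bigcup_{D\in P}\beta D$). Vertex-transitivity then supplies some $\wt E\in\cC$ with $v\in\beta\wt E$; nestedness forces (after complementing) $\wt E\subseteq\bigcap_{D\in P}D$, and taking $E\in\cC$ minimal with $\wt E\subseteq\Comp E\subsetneq C$ gives $E\sim C$, $E\notin P$, and $d(\partial E,\bigcup_{D\in P}\partial D)\le 1$ via $v$ and its neighbour. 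To repair your argument you would need to invoke transitivity at the point where you try to produce the separating cut $F$; once you do, the construction essentially becomes the paper's.
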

	
	\begin{proof}

		The if-part is clear. Thus, let $P \neq [C]$. 
		Then there is some $E \in [C] \setminus P$. Since $ \Comp{E} \ssnq D$ for all $D \in P$, we have  $ \emptyset \neq \Comp{E} \sse \bigcap_{D\in P} D$. Now, if $\partial \Comp{E} \sse \bigcup_{D\in P}\partial D $, we are done. Otherwise, there is some vertex  $u \in\Comp{E} \sse  \bigcap_{D\in P} D $ with $\dist(u,\bigcup_{D\in P}\partial D)\geq 1$. 
\ifArxiv 
		Take the connected component $F$ of $\bigcap_{D\in P}  D $ such that $u \in F$.  Since $\partial F \sse \bigcup_{D\in P}\partial D$ (if $w\in \partial F$, then $w$ is adjacent to some vertex in $\Comp{D}$ for some $D \in P$), we find a vertex  $v \in F \sse \bigcap_{D\in P} D $ with $\dist(v,\bigcup_{D\in P}\partial D)= 1$.
\else
		 Since $\partial \bigl(\bigcap_{D\in P}  D \bigr) \sse \bigcup_{D\in P}\partial D$, we also find a vertex  $v \in \bigcap_{D\in P} D $ with $\dist(v,\bigcup_{D\in P}\partial D)= 1$ by following a path from $u$ to $\partial \bigl(\bigcap_{D\in P}  D \bigr)$ inside $\bigcap_{D\in P}  D$.
\fi
		Notice that, in particular, we have 
		\begin{align}
		v &\not\in \beta D \cup \Comp{D} \qquad\text{for all } D \in P.\label{eq:v_not_in_D}
		\end{align}

		Now since $\Gamma$ is vertex-transitive, we can find some cut $\wt E \in \cC$ such that $v \in \beta \wt E$.
		After possibly exchanging $\wt E$ with its complement, we can assume that $\wt E \ssnq C$ or $\wt E \sse \Comp{C}$. The latter would imply $v \in \beta \wt E \sse \beta C \cup \Comp{C}$ contradicting \prettyref{eq:v_not_in_D}.
		Moreover, for any other $D \in P$, we have  $\wt E \sse D$ because all other possibilities for $\wt E$ and $D$ being nested lead to a contradiction:
		\begin{itemize}
			\item if $\Comp D \ssnq \wt E $, then  $\Comp D\ssnq \wt E \ssnq C$ contradicting $D \sim C$
			\item if $D \ssnq \wt E $, then  $D \sse \wt E \sse C$ and $\Comp D \sse C$ contradicting $\Comp C \neq \emptyset$
			\item if $ \wt E \sse \Comp D$, then $v \in \beta \wt E \sse \beta D \cup \Comp{D}$ contradicting \prettyref{eq:v_not_in_D}.
		\end{itemize}
		Thus, $ \wt E \sse \bigcap_{D\in P} D$.
		Let $E\in \cC$ be minimal with respect to inclusion such that $\wt E \sse \Comp E \ssnq C$. Then $E \sim C$, but $E \not\in P$ because $v\in \beta \wt E \sse \beta E \cup \Comp{E}$.
		
		It remains to verify that $\dist(\partial E, \bigcup_{D\in P} \partial D) \leq 1$. Let $w \in \partial D $ for some $D\in P$ a vertex with $\dist(w,v) =1$. Then, we have $w \in \beta D \cup \Comp{D} \sse \beta E \cup E$. Consider the two cases: $v \in \Comp E$ and $v \in \partial E$. 
		If $v \in \Comp E$, then $w \in \beta E \cap \partial D $ and hence $d(\partial E, \bigcup_{D\in P} \partial D) \leq 1$. If $v \in \partial E$, then  $\dist(\partial E, \bigcup_{D\in P} \partial D) \leq \dist(v,w) = 1$.
	\end{proof}
\ifArxiv \else	
	Now, an easy inductive argument shows the next lemma.
\fi
	\begin{lemma}\label{lem:classdiameterbound} Let $\cC$ be a complete set of cuts and $R \in \N$ such that $\diam \beta C \leq R$ for all $C \in \cC$. Let $C \in \cC$, then 
\ifArxiv
	\[\diam \left( \bigcup_{C\sim D} \beta C \right) \leq (R+1) \cdot\abs{[C]} .\] 
\else
		$\displaystyle\diam \Bigl( \bigcup\limits_{C\sim D} \beta C \Bigr) \leq (R+1) \cdot\abs{[C]}.$ 
\fi
		
	\end{lemma}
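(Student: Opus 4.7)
The plan is to enumerate the equivalence class $[C]$ one cut at a time in an order compatible with Lemma~\ref{lem:classcomplete}, and then track the growth of the diameter by a short induction.

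Concretely, I would use Lemma~\ref{lem:classcomplete} to construct an enumeration $[C] = \{D_1, \dots, D_n\}$ where $n = \abs{[C]}$, $D_1 = C$, and for every $i \geq 1$ the cut $D_{i+1}$ satisfies
\[ d\bigl(\partial D_{i+1},\; \bigcup_{j \leq i} \partial D_j\bigr) \leq 1. \]
This is done inductively: given $P_i = \{D_1,\dots,D_i\} \subsetneq [C]$, Lemma~\ref{lem:classcomplete} produces a cut in $[C] \setminus P_i$ whose inner boundary lies at distance at most $1$ from the union of the inner boundaries of cuts in $P_i$, and we take this cut to be $D_{i+1}$.

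Writing $U_i := \bigcup_{j \leq i} \beta D_j$, I would then prove by induction on $i$ that $\diam(U_i) \leq i(R+1) - 1$. For the base case $i=1$, $\diam(U_1) = \diam(\beta D_1) \leq R = 1\cdot(R+1) - 1$. For the inductive step, let $v, w \in U_{i+1}$. If both lie in $U_i$ or both in $\beta D_{i+1}$, the bound follows directly from the induction hypothesis or from $\diam(\beta D_{i+1}) \leq R$. Otherwise, assume $v \in U_i$ and $w \in \beta D_{i+1}$; by construction there is some $u \in \partial D_{i+1} \sse \beta D_{i+1}$ and $u' \in \bigcup_{j\leq i} \partial D_j \sse U_i$ with $d(u,u') \leq 1$, so by the triangle inequality
\[ d(v,w) \leq d(v, u') + d(u', u) + d(u, w) \leq (i(R+1) - 1) + 1 + R = (i+1)(R+1) - 1. \]

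Applied with $i = n$, this gives $\diam\bigl(\bigcup_{D \sim C} \beta D\bigr) = \diam(U_n) \leq n(R+1) - 1 \leq (R+1)\cdot\abs{[C]}$, which is the desired bound. The only nontrivial ingredient is Lemma~\ref{lem:classcomplete} itself, which is already established; everything else is bookkeeping through the triangle inequality, so there is no real obstacle beyond setting up the enumeration correctly.
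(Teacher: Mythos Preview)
Your proposal is correct and follows essentially the same approach as the paper: both construct an enumeration of $[C]$ via Lemma~\ref{lem:classcomplete} and then bound the growth inductively. Your bookkeeping, tracking $\diam(U_i)$ directly rather than containment in a ball around a fixed vertex, is in fact slightly cleaner and yields the sharper intermediate bound $n(R+1)-1$.
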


	\begin{proof}
	We can construct the class $[C]$ by starting with the singleton set $\oneset{C}$ and successively adding cuts $D$ with $D\sim C$. More precisely, we construct a sequence of cuts $C_1, \dots, C_n$ such that $C_1 = C$, $C_i \sim C$ for all $i$ and $\beta C_i \sse B_v((R+1)i)$ for any fixed $v \in \beta C$. This will show the lemma.  
	
	By assumption, we have $\beta C_1 \sse B_v(R)$. Now, let $C_1, \dots, C_i$ be constructed. If $\oneset{C_1, \dots, C_i} \neq [C]$, by \prettyref{lem:classcomplete}, we find a cut $C_{i+1} \in [C]$ with  $\dist(\bigcup_{j=1}^i \partial C_j, \partial C_{i+1}) \leq 1$. Thus, by induction $\dist(v, \partial C_{i+1}) \leq 1 + (R+1)i$ and so   $\beta C_{i+1} \sse B_v((R+1)(i+1))$.
\end{proof}

	\begin{lemma}\label{lem:gog_bound}
		Let $\Gamma$ be the Cayley graph of $G$. Moreover, assume that
		\begin{itemize}
			\item  $R$ is an upper bound on the diameter of the boundary of minimal cuts,
			\item  $\Theta$ is an upper bound on the number of undirected edges of a reduced graph of group for $G$,
			\item $\Xi$ is an upper bound on the size of finite subgroups of $G$.
		\end{itemize}
%
		Then there exists a graph of groups ${\cal G}$ over $Y$ and an isomorphism
		$\varphi: \pi_1(\cG,T) \to  G$
		with
		\begin{enumerate}
			\item $\Card{V(Y)} \leq \Theta + 1$,\label{V_bound}
			\item $\Card{G_P} \leq \Xi $ for all $P \in V(Y)$,\label{G_P_bound}
			\item  $|\phi(a)| \leq 4(R+1)\cdot(\Theta + 1)^2 \cdot \Xi$ for every $a \in \bigcup_{P\in V(Y)} G_P\cup E(Y)$.\label{edge_bound}
		\end{enumerate}
	\end{lemma}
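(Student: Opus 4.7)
The plan is to construct the graph of groups via the Bass--Serre procedure of \prettyref{sec:GX} applied to a suitable action of $G$ on a subdivision of the structure tree, and then derive the three bounds from the lemmas just developed.

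Starting from $\Copt$ (which is a complete cut set by \prettyref{prop:opt_nested} and \prettyref{thm:finite_vertex_stabilizers}), I first reduce to a reduced complete cut set $\cC$ as in the construction preceding \prettyref{lem:reduced_complete}. I then form the structure tree $T(\cC)$ and pass to its subdivision $\wt T(\cC)$ in order to avoid edge inversion; applying the recipe of \prettyref{sec:GX} yields a graph of groups $\cG$ over $Y=G\bs\wt T(\cC)$ together with an isomorphism $\varphi:\pi_1(\cG,T)\to G$ by \prettyref{thm:bst}. By \prettyref{lem:reduced_edge_inversion}, $\cG$ is reduced, so $\abs{E(Y)}\leq\Theta$; since $Y$ is connected this already gives bound~(i). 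Bound~(ii) is immediate because each vertex group is a finite subgroup of $G$ (\prettyref{thm:finite_vertex_stabilizers}).

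The main work is the word-length bound~(iii). For each vertex $u=[C]$ of $\wt T(\cC)$ I set $\pi(u):=\bigcup_{D\sim C}\beta D\sse V(\Gamma)$ (for a subdivision vertex, defined analogously from the pair $\{C,\ov C\}$). By \prettyref{lem:subgroupbound} and \prettyref{lem:classdiameterbound} one has $\diam\pi(u)\leq(R+1)\cdot 2\Theta\Xi$. Using vertex-transitivity of $G$ on $\Gamma$, I translate so that $1\in\pi(\iota P_0)$ for a chosen base vertex $P_0\in V(Y)$, and set $v_{P_0}:=1$. I choose the representatives $\iota V(Y)$ to form a connected subtree of $\wt T(\cC)$ containing a spanning tree of $Y$, so that for spanning-tree edges $g_y=h_y=1$. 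Traversing the spanning tree outward from $P_0$: when crossing from an already-assigned $P$ to a new $Q$ along an edge $\iota y = C$, the common boundary $\beta C$ lies in both $\pi(\iota P)$ and $\pi(\iota Q)$, so I pick $v_Q\in\beta C$ at distance at most $\diam\pi(\iota P)$ from $v_P$. Since the spanning tree of $Y$ has at most $\Theta$ edges, a short induction yields $\abs{v_P}\leq 2(R+1)\Theta^2\Xi$ for every $P\in V(Y)$.

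Finally, for $g\in G_P$ the element $g$ fixes $\iota P$, hence $g\cdot v_P\in\pi(\iota P)$, giving $\abs{g}\leq 2\abs{v_P}+\diam\pi(\iota P)$, which sits comfortably below $4(R+1)(\Theta+1)^2\Xi$. For an edge generator $\phi(y)=\ov g_y h_y$ with $\iota y = C$ joining $g_y\iota P$ to $h_y\iota Q$, I pick any $w\in\beta C\sse g_y\pi(\iota P)\cap h_y\pi(\iota Q)$ and estimate $\abs{\phi(y)}=d(g_y,h_y)\leq d(g_y,w)+d(w,h_y)\leq \abs{v_P}+\diam\pi(\iota P)+\abs{v_Q}+\diam\pi(\iota Q)$, again within the required bound. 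The one delicate point is arranging the representative choices (base point $P_0$, spanning tree of $Y$, embedding $\iota V(Y)\hookrightarrow\wt T(\cC)$, and vertices $v_P$) coherently so that the diameter estimate of \prettyref{lem:classdiameterbound} applies simultaneously along the whole spanning tree; once these choices are fixed, the remaining arithmetic is routine.
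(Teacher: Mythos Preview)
Your proposal is correct and follows essentially the same route as the paper: reduce $\Copt$ to a reduced complete cut set, form $\wt T(\cC)$, apply the Bass--Serre construction of \prettyref{sec:GX}, and then control word lengths via \prettyref{lem:subgroupbound} and \prettyref{lem:classdiameterbound}. The only organizational difference is that the paper bounds everything by a single radius $\Lambda$ (showing $\bigcup_{P}\bigcup_{D\in\iota P}\partial D\sse B(\Lambda)$ and then using that $g\in G_P$ maps a point of $B(\Lambda)$ into $B(\Lambda)$), whereas you track per-vertex anchors $v_P$ along the spanning tree; the arithmetic comes out the same. One small omission: when $G$ is finite, $\Copt=\emptyset$ and the structure-tree construction is vacuous, so this case must be handled separately (the paper disposes of it in one line at the start).
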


\begin{proof}
	First consider the case that $\Gamma$ is finite. In this case we
	have $V(Y) = \oneset{P}$ with $G \cong G_P$. Thus,
	$\abs{V(Y)} = 1 \leq \Theta + 1$ and $\Card{G_P} \leq \Xi$ by
	definition of $\Xi$. Moreover, the Cayley graph of $G$ has diameter
	at most $\Xi -1$ (as it has only $\Xi$ vertices). Therefore, every
	every $g\in G_P$ has an image of length at most
	$\Xi - 1 \leq 2(R+1)\cdot(\Theta + 1)^2 \cdot \Xi$.
	
	Now let $\Gamma$ be infinite. We start with the structure tree
	$T(\Copt)$.  By \prettyref{lem:reduced_complete}, we
	can switch to a complete subset $\cC \sse \Copt$ such that the
	corresponding graph of groups is reduced. Let $Y = G \bs \wt T(\cC)$ (as described at the beginning of \prettyref{sec:bounds}, $\wt T(\cC)$ is the tree obtained from $T(\cC)$ by putting additional vertices on inverted edges).  We need to choose proper
	representatives $\iota V(Y)$ and $\iota E(Y)$ to construct
	the graph of groups as in \prettyref{sec:GX}.
	
	Fix any $P \in V(Y)$ and choose a representative $\iota P \in V(\wt T(\cC)) $ such that $1 \in \bigcup_{C \in \iota P} \partial C$. Now, choose the other representatives according to \prettyref{sec:GX} such that $\iota V(Y)$ forms a connected subgraph and $\iota E(Y)$ is closed under complementation. This defines the vertex and edge groups as the respective stabilizers. For vertices $Q$ with  $\iota Q = v_{\{C,\ov C\}}$ (\ie vertices placed on inverted edges), the vertex group is the stabilizer $G_{v_{\{C,\ov C\}}} = \set{g \in G}{gC = C \text{ or } gC = \ov C}$. For the edge $y$ with $s(y) = Q$, we set $G_y = G_C$ if $C \in \iota P$ for some $P\in  V(Y) \setminus \smallset{Q}$ (\ie $C$ is the cut which connects $v_{\{C,\ov C\}}$ to the rest of $\iota V(Y)$).

	Now, $y\in E(Y)$ is incident to $P \in V(Y)$ if, and only if,  $ s(\iota y) = g_y \iota P$ for some $g_y\in G$ (resp.\ $ t(\iota y) = g_{\ov y} \iota P$). For every $y \in E(Y)$ we can fix such an element $g_y$ and 		
	define $\phi : F(\cG) \to G$ by 
	\begin{align*}
	&&&&	\phi(g) &= g &&\text{for } g \in G_P, P \in V(Y),&&
	&&&	\phi(y) &= g_y^{-1} g_{\ov y} && \text{for }y \in E(Y).&&
	\end{align*}
	By \prettyref{thm:bst} this induces an isomorphism $\pi_1(\cG,T) \to  G$.
	
	Points \ref{V_bound} and \ref{G_P_bound} hold by the very definition of $\Theta$ and $\Xi$ (since $Y$ is connected, it has at least $\abs{V(Y)} - 1$ edges).
	For point \ref{edge_bound}, let us first show that 
	\begin{align}
	\bigcup_{P \in V(Y)} \bigcup_{D \in P} \partial D \sse B(2(R+1)\cdot (\Theta + 1)\cdot\Theta \cdot \Xi + \Theta).\label{eq:tranvsversalbound}
	\end{align}
	
	For every equivalence class $P \in V(Y)$, by \prettyref{lem:classdiameterbound} and \prettyref{lem:subgroupbound}, we have $ \diam\left(\bigcup_{D \in P} \partial D\right) \leq (R+1)\cdot\bigR$. Moreover, by \ref{V_bound}, we have $\abs{ V(Y)} \leq \Theta + 1$. Since $\dist\left(\bigcup_{D \in P} \partial D, \bigcup_{D \in Q} \partial D\right) \leq 1$ if there is an edge (\ie  a cut) connecting $P$ to $Q$, we obtain  \prettyref{eq:tranvsversalbound}. 
	Let us write $\Lambda$ for $2(R+1)\cdot (\Theta + 1)\cdot\Theta \cdot \Xi + \Theta $.
	
	Now, consider the action of $G$ on its Cayley graph $\Gamma$: every $g \in G_P$ for $P \in V(Y)$ maps a vertex from $B(\Lambda)$ to another vertex in $B(\Lambda)$ (namely the elements of $\bigcup_{D \in P} \partial D$). Since the action of $G$ on $\GG$ is free, this means that $g$ has length at most $2\Lambda $. Likewise the image $g_D^{-1} g_{\ov D}$ of an edge $D \in P$ with $P \in V(Y)$~-- that is in particular $\partial D \sse B(\Lambda)$~-- maps $\partial \ov D \sse B(\Lambda + 1)$ into $ B(\Lambda)$ (since it maps the class $[\ov D]$ into $V(P)$). Therefore,
	\begin{align*}
	\abs{\phi(D)} &\leq 2\Lambda + 1 = 2\left(2(R+1)\cdot (\Theta + 1)\cdot\Theta \cdot \Xi + \Theta\right) + 1\\
	&\leq 4(R+1)\cdot (\Theta + 1)^2 \cdot \Xi.\qedhere
	\end{align*}
\end{proof}

\section{Stronger bounds for virtually free presentations}\label{sec:outer}


	Let us start with a virtually free group $G$ given as a free subgroup $F(X)$ of finite index together with a system of representatives $S$ of $F\bs G$. That means every group element can be written as $xs$ with $x \in F(X)$ and $s \in S$. Moreover, this normal form can be computed in linear time from an arbitrary word by successively applying ``commutation rules'' of letters from $S$ and $X \cup \ov X\cup  S$ to the word. This gives a virtually free presentation. For this special case, we can derive stronger bounds on the triangulation constant $k$ and other parameters. \ifArxiv These can be used later to show that in this case the graph of groups can be computed in \NP. \fi
 
%
Formally, a \emph{virtually free presentation} $\cvF$ for $G$ is given by the following data:
\begin{itemize}
	\item finite sets $X ,\ov{X}, S$, where $X \cup\ov{X}$ is a symmetric alphabet, $1 \in S$ and
	$(X \cup \ov{X}) \cap S= \emptyset$, 
	\item for all $y \in X \cup \ov X$, $r,t \in S \setminus \smallset{1}$, there are
	words $x_{r,y}, x_{r,t} \in (X \cup \ov X)^*,  s_{r,y} , s_{r,t} \in S$ such that
	\begin{align}
	ry &=_G x_{r,y}s_{r,y} & rt &=_G x_{r,t}s_{r,t}\label{eq:outer}
	\end{align}
\end{itemize} 
	fulfilling two properties:
	\begin{enumerate}
		\item for all $r \in S \setminus \smallset{1}$ there is some
		$ r' \in S \setminus \smallset{1}$ such that $s_{r',r}=1$ (\ie $G$ is a group),
		\item the equations (\ref{eq:outer}), oriented from left to right,
		together with the free reductions
		$x\ov{x}=1$ for  $x \in X \cup \ov{X}$
		form a {\em confluent} rewriting system  (for a definition, see \eg\ \cite{bo93springer,jan88eatcs}).		
	\end{enumerate}
Clearly such a presentation is terminating (noetherian), $F(X)$ is a subgroup of $G$,  
	 $G=F(X) \cdot S$, and
	$F(X) \cap S=\{1\}$ (hence $S$ is a system of right-representatives for $F(X)$).
	Note that properties (i) and (ii) can be checked in polynomial time.

Using this confluent rewriting system, every $g \in G$ can be uniquely written in its \emph{normal form} $g=xs$ where $x \in (X \cup \ov X)^*$ is a freely reduced word and $s \in S$. 
Given any word in $(X \cup \ov X \cup S)^*$, the normal form can be computed in linear time from left to right by applying the identities \prettyref{eq:outer} and reducing freely. This is the computation of a deterministic pushdown automaton for the word problem of $G$:

\begin{lemma}\label{lem:cfg_outer}
				Let $G$ be the group defined by a virtually free presentation $\cvF$. Then a deterministic pushdown automaton for $\WP{G}$ can be computed in polynomial time.
\end{lemma}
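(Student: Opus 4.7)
The plan is to construct a DPDA $\cM$ that processes its input from left to right while maintaining the normal form (with respect to the confluent rewriting system of $\cvF$) of the prefix read so far. By confluence and termination, every word in $(X \cup \ov X \cup S)^*$ reduces to a unique word of the shape $xs$ with $x \in (X \cup \ov X)^*$ freely reduced and $s \in S$, and this normal form equals the empty word $\varepsilon$ if and only if the word represents $1$ in $G$. Hence $w \in \WP{G}$ exactly when the normal form of $w$ is $\varepsilon$, and that is what $\cM$ will test.

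I would take the stack alphabet to be $X \cup \ov X$ together with a bottom-of-stack marker $\bot$, the finite-control state set to be $S$ augmented by a polynomially bounded collection of auxiliary states, and the initial configuration to be (state $1$, stack $\bot$). The invariant I would maintain is: after reading a prefix $w'$ of the input whose normal form is $xs$, the stack contents from $\bot$ upwards are exactly the letters of $x$, and the current state is $s$.

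For the transition function, when $\cM$ is in state $s$ and reads an input symbol $a \in X \cup \ov X \cup S$, it looks up the word $x_{s,a}$ and the representative $s_{s,a}$ from the equations \eqref{eq:outer} of $\cvF$ (using the conventions $x_{1,a}=a, s_{1,a}=1$ if $a\in X\cup\ov X$, and $x_{1,a}=\varepsilon, s_{1,a}=a$ if $a \in S$). It then appends the letters of $x_{s,a}$ one at a time to the stack while performing free reduction against the current top: whenever the letter to push is the inverse of the current top symbol, the top is popped instead. This is realised by a short deterministic chain of $\epsilon$-transitions through auxiliary states indexed by positions inside $x_{s,a}$, whose behaviour is conditioned on the current top symbol. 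Finally, the state is updated to $s_{s,a}$ and $\cM$ proceeds to the next input letter. After the input is consumed, $\cM$ accepts exactly when the state is $1$ and the stack is $\bot$.

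Determinism is clear: at every step the next move is fully determined by the triple (state, current input or $\epsilon$, top stack symbol). Since each word $x_{s,a}$ has length at most $\Abs{\cvF}$ and there are at most $|S|\cdot(|X\cup\ov X|+|S|)$ relevant pairs $(s,a)$, the overall number of states and transitions of $\cM$ is polynomial in $\Abs{\cvF}$, and writing it down takes polynomial time. The one delicate point is the push-with-free-reduction phase, where cancellations may cascade across the boundary between the existing stack top and the word $x_{s,a}$; however, since $|x_{s,a}| \leq \Abs{\cvF}$, the cascade is finite and handled by the bounded $\epsilon$-loop described above, so no size blow-up occurs.
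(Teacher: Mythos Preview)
Your proposal is correct and is precisely the approach the paper intends: the sentence immediately preceding the lemma already says that the normal form can be computed in linear time from left to right by applying the identities \eqref{eq:outer} and reducing freely, and that ``this is the computation of a deterministic pushdown automaton for the word problem of $G$''; your write-up simply makes this explicit by storing the freely reduced $x$-part on the stack and the coset representative $s$ in the state. One small point: the paper's generating set is $\Sigma = X \cup \ov X \cup S \cup \ov S$, so you should also treat input letters from $\ov S$, but the paper already notes that the rules $r\ov s = x_{r,\ov s}s_{r,\ov s}$ are derivable with $\abs{x_{r,\ov s}}$ bounded by the same maximum, so the identical mechanism applies.
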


Notice that a finite extension of a free group is a special case of a virtually free presentation where $F(X)$ is a normal subgroup of $G$ (\ie $s_{r,y} = r$ for all $r \in S$, $y \in X \cup \ov X$). 
	
	We assume that $\cvF$ is written down in a naive way as input for algorithms: there is a table which for all $a \in X \cup \ov X \cup S $ and $r \in S$ contains a word $x_{r,a}$ and some $s_{r,a} \in S$. The size (number of letters) of this table is $ S\cdot(2X + S) \cdot \max\set{\abs{x_{r,a}}+1}{ a \in X \cup \ov X \cup S,\, r \in S}$.
	Up to logarithmic factors, this is the number of bits required to write down $\cvF$ this way.

	We can always add a disjoint copy of formal inverses $\ov S$ of $S$ representing $S^{-1}$ in $G$.
	Note that for $\ov s \in  \ov S$ this yields the rule $r \ov s = x_{r,\ov s}, s_{r,\ov s}$ for some $s_{r,\ov s} \in S$ and with $x_{r,\ov s} = x_{s_{r,\ov s},s}^{-1}$. In particular, $\abs{x_{r,\ov s}} \leq  \max\set{\abs{x_{r,a}}}{ a \in X \cup \ov X \cup S,\, r \in S}$. Therefore, 
	we define the size of $\cvF$ as $\Abs{\cvF} = S (2X + 2S) \cdot \max\set{\abs{x_{r,a}}+1}{ a \in X \cup \ov X \cup S,\, r \in S}$. 
	
	Whenever we talk about a group $G$ given as a virtually free presentation, we denote $\Sigma = X \cup \ov X \cup S \cup \ov S$. The respective Cayley graph $\Gamma = \Gamma_\Sig(G)$~-- as it is undirected~-- is defined with respect to this alphabet. So in particular, its degree is bounded by $\Abs{\cvF}$.
%

\begin{lemma}\label{lem:triangulation_k_tree}
	Let $T$ be a tree and let $v_0, \dots, v_n$ a sequence of vertices of $T$ such that $v_0 = v_n$ and $d(v_i,v_{i-1}) \leq k$ for some $k \in \N$. Then, the sequence $v_0, \dots, v_n$ is $k$-triangulable.
\end{lemma}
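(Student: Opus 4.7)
The plan is to proceed by strong induction on $n$. The base cases $n \leq 3$ are immediate: for $n < 3$ the sequence is considered triangulated by definition, and for $n = 3$ the sequence $v_0, v_1, v_2, v_3 = v_0$ already forms a triangle with no diagonals to verify. For the inductive step with $n \geq 4$, I aim to exhibit an index $1 \leq i \leq n-1$ with $d(v_{i-1}, v_{i+1}) \leq k$. Since $n \geq 4$, the pair $\{v_{i-1}, v_{i+1}\}$ is a genuine diagonal of the $n$-gon, splitting it into the triangle $v_{i-1}, v_i, v_{i+1}$ (already triangulated) and the closed sequence $v_0, v_1, \ldots, v_{i-1}, v_{i+1}, \ldots, v_{n-1}, v_n = v_0$ of length $n-1$ whose consecutive distances are all $\leq k$, so the induction hypothesis finishes the job.

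To locate a suitable $i$, consider $f(j) := d(v_0, v_j)$, so $f(0) = f(n) = 0$ and $f \geq 0$. If $f \equiv 0$, every $v_j$ equals $v_0$ and any chord has distance $0$. Otherwise $f$ attains a strictly positive maximum at some interior index $i$, so $f(i-1) \leq f(i) \geq f(i+1)$. The claim is that every such local maximum automatically satisfies $d(v_{i-1}, v_{i+1}) \leq k$.

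The proof of the claim rests on the four-point condition in trees: the three geodesics $[v_0, v_{i-1}]$, $[v_0, v_i]$, $[v_0, v_{i+1}]$ issuing from the common vertex $v_0$ have pairwise longest common prefixes whose lengths form a chain. Writing $a$ for the length of the shared prefix of $[v_0, v_{i-1}]$ and $[v_0, v_i]$ and $b$ for that of $[v_0, v_i]$ and $[v_0, v_{i+1}]$, the shared prefix of $[v_0, v_{i-1}]$ and $[v_0, v_{i+1}]$ has length exactly $\min(a, b)$. Combining the identities $d(v_{i-1}, v_i) = f(i-1) + f(i) - 2a \leq k$, $d(v_i, v_{i+1}) = f(i) + f(i+1) - 2b \leq k$, and $d(v_{i-1}, v_{i+1}) = f(i-1) + f(i+1) - 2\min(a,b)$, a short case analysis on whether $a \leq b$ or $b \leq a$ yields $d(v_{i-1}, v_{i+1}) \leq k + (f(i+1) - f(i)) \leq k$ in the first case and $\leq k + (f(i-1) - f(i)) \leq k$ in the second, where the final inequalities use that $i$ is a local maximum of $f$. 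The only mild obstacle is the careful bookkeeping of common-prefix lengths (equivalently, of distances to medians), but once the four-point condition is invoked the rest is a routine computation.
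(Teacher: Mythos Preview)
Your proof is correct and essentially identical to the paper's: both pick $i$ maximizing $d(v_0,v_i)$, show $d(v_{i-1},v_{i+1})\le k$ via the tree median/four-point structure, and then induct on $n$ by deleting $v_i$. One tiny imprecision: the shared prefix of $[v_0,v_{i-1}]$ and $[v_0,v_{i+1}]$ has length \emph{at least} $\min(a,b)$ (strict inequality is possible when $a=b$), but since you only need an upper bound on $d(v_{i-1},v_{i+1})=f(i-1)+f(i+1)-2c$, the inequality $c\ge\min(a,b)$ is exactly what your case analysis uses, so nothing is lost.
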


\begin{proof}
	Let $i$ be such that $d(v_i, v_0)$ is maximal. We will show that the sequence $u_0, \dots, u_{n-1}$ with $u_j = v_j$ for $j<i$ and $u_{j} = v_{j+1}$ for $j \geq i$ still satisfies $d(u_i,u_{i-1}) \leq k$. Then the lemma follows by induction.
	
	In particular, we only need to show that $d(v_{i-1},v_{i+1}) \leq k$. 
	Let $x$ be the last vertex which is shared by the geodesics from $v_0$ to $v_i$ and to $v_{i-1}$ and $y$ the last vertex shared by the geodesics from $v_0$ to $v_i$ and to $v_{i+1}$. Without loss of generality we may assume $d(v_0,x) \leq d(v_0,y)$ meaning that $d(x,v_i) = d(x,y) + d(y,v_i)$ and $d(x,v_{i+1}) = d(x,y) + d(y,v_{i+1})$.
	
	Since $d(v_0, v_i)$ is maximal, we have $d(y,v_{i+1}) \leq d(y,v_{i})$. Moreover, by the assumption of the lemma, we have
	$d(v_{i-1},v_i)  = d(v_{i-1},x) + d(x,v_i)  \leq k$.
	%
	Therefore, 
	\begin{align*}
	d(v_{i-1},v_{i+1}) &\leq  d(v_{i-1},x) + d(x,v_{i+1})\\
	&= d(v_{i-1},x) + d(x,y) + d(y,v_{i+1})\\
	&\leq d(v_{i-1},x) + d(x,y) + d(y,v_{i})\\
	&=(v_{i-1},x) + d(x,v_i) = d(v_{i-1},v_i) \leq k\qedhere
	\end{align*}
\end{proof}

	\begin{lemma}\label{lem:triangulation_k_outer}
		Let $G$ be the group defined by a virtually free presentation $\cvF$ 
and let $\Gamma$ be its Cayley graph. Then $\Gamma$ is $k$-triangulable for 
$k= 2\Abs{\cvF} + 2$.
	\end{lemma}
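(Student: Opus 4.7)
The plan is to reduce the triangulation problem in $\Gamma$ to a triangulation problem in the Cayley graph $T_F$ of $F(X)$ with respect to $X \cup \ov X$, which is a tree. Define a projection $\phi : G \to F(X)$ by sending each group element to the $F(X)$-part of its normal form, so that every $g \in G$ may be uniquely written as $g = \phi(g) \cdot s$ with $s \in S$. Either $s = 1$, in which case $\phi(g) = g$, or there is an $S$-edge between $\phi(g)$ and $g$ in $\Gamma$; in both cases $d_\Gamma(g, \phi(g)) \leq 1$.

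The key estimate is that for any two adjacent vertices $u, v$ in $\Gamma$ we have $d_{T_F}(\phi(u), \phi(v)) \leq M$, where $M := \max\set{\abs{x_{r,a}}}{r \in S,\, a \in X \cup \ov X \cup S \cup \ov S}$ (using the extension of the rewriting rules in \prettyref{eq:outer} to $\ov S$-letters, as described immediately after \prettyref{lem:cfg_outer}). Indeed, writing $u = x \cdot s$ in normal form and $v = u \cdot a$ for $a \in \Sigma$, a single application of the relevant rewriting rule gives $v = x \cdot x_{s,a} \cdot s_{s,a}$, so that $\phi(v)$ is the freely reduced form of $x \cdot x_{s,a}$ in $F(X)$. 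Since $T_F$ is the tree of $F(X)$ with respect to $X \cup \ov X$, this yields $d_{T_F}(\phi(u), \phi(v)) \leq |x_{s,a}| \leq M$. The degenerate cases where $s = 1$, or where no rewriting is needed (e.g.\ $s = 1$ and $a \in S$, giving distance $0$), only make this bound smaller.

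Given any closed path $v_0, v_1, \dots, v_n = v_0$ in $\Gamma$, the projected sequence $\phi(v_0), \phi(v_1), \dots, \phi(v_n)$ is a closed sequence in the tree $T_F$ with consecutive distances at most $M$. By \prettyref{lem:triangulation_k_tree} it admits an $M$-triangulation. We lift each chord $(\phi(v_i), \phi(v_j))$ of this triangulation to the chord $(v_i, v_j)$ of the original polygon; this yields a triangulation of the original closed path, and the triangle inequality gives
\[
d_\Gamma(v_i, v_j) \,\leq\, d_\Gamma(v_i, \phi(v_i)) + d_{T_F}(\phi(v_i), \phi(v_j)) + d_\Gamma(\phi(v_j), v_j) \,\leq\, 1 + M + 1.
\]
Since $M \leq \Abs{\cvF}$, every chord has length at most $M + 2 \leq 2\Abs{\cvF} + 2$, as required. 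The only real work lies in the case analysis for the key distance bound in the second paragraph (in particular the $a \in \ov S$ case, where one invokes the extension of the rewriting system); once that is in hand, the tree triangulation lemma completes the argument.
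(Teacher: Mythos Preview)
Your proof is correct and follows essentially the same approach as the paper: project the closed path to the Cayley tree of $F(X)$ via the normal-form map, invoke \prettyref{lem:triangulation_k_tree} there, and lift chords back to $\Gamma$ at the cost of $+2$ for the $S$-edges. Your distance estimate $d_{T_F}(\phi(u),\phi(v)) \leq M$ is in fact sharper than the paper's~--- the paper writes $x_i = x_{i-1} s_{i-1} a s_i^{-1}$ and applies two rewriting steps to obtain $d(x_{i-1},x_i) \leq 2\Abs{\cvF}$, whereas one step (as you do) already yields the normal form and hence the bound $M$~--- but both easily give the required $2\Abs{\cvF}+2$.
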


\begin{proof}
	Consider a closed path $v_0, \dots, v_n$ (\ie for all $i$ there is some $a \in \Sigma$ with $v_i = v_{i-1} \cdot a$ and $v_n = v_0$) and let $x_i s_i$ for $x_i\in F(X)$, $s_i \in S$ be the normal form of $v_i$ for $i=0,\dots, n$. We consider the sequence $x_0, \dots, x_n$ in the Cayley graph of $F(X)$. Since $x_is_i = x_{i-1}s_{i-1}a$ for some $a \in \Sigma$, we have
	\begin{align*}
	x_i 	&\,=\, x_{i-1}s_{i-1}as_i^{-1}
	\,=\, x_{i-1} x_{s_{i-1},a}  s_{s_{i-1},a} \ov s_i
	\,=\, x_{i-1} x_{s_{i-1},a}  x_{s_{s_{i-1},a}, \ov s_i} s_{s_{s_{i-1},a}, \ov s_i}.
	\end{align*}
	
	Therefore, we have $d(x_i,x_{i+1}) \leq 2\max\set{\abs{x_{r,a}}+1}{ a \in X \cup \ov X \cup S,\, r \in S} \leq 2\Abs{\cvF}$	
	and so the sequence $x_0, \dots, x_n$ can be $2\Abs{\cvF}$-triangulated by \prettyref{lem:triangulation_k_tree}. Now such a  $2\Abs{\cvF}$-triangulation is a  $2\Abs{\cvF} + 2$-triangulation of the original path $v_0, \dots, v_n$.
\end{proof}

	\begin{lemma}\label{lem:finite_subgroups_outer_action}
		Let $G$ be the group defined by a virtually free presentation $\cvF$. Then for every finite subgroup $H \leq G$, we have $\abs{H} \leq\abs{S} $. Hence, in particular, $\abs{H} \leq\Abs{\cvF}$.
	\end{lemma}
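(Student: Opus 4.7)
The plan is to exploit the fact that in a virtually free presentation, $F(X)$ is a genuinely free group, and free groups are torsion-free. Since every non-trivial element of a finite subgroup $H \leq G$ has finite order, the intersection $H \cap F(X)$ can only contain the identity.

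More concretely, I would consider the canonical map $\pi \colon G \to F(X) \bs G$ sending $g$ to the right coset $F(X) g$, and restrict it to $H$. The restriction $\pi|_H$ is injective: if $\pi(h) = \pi(h')$ for $h, h' \in H$, then $h h'^{-1} \in F(X) \cap H \subseteq F(X)$, and since $F(X)$ is torsion-free while $h h'^{-1}$ has finite order (as an element of the finite group $H$), we must have $h h'^{-1} = 1$. Hence $\abs{H} \leq \abs{F(X) \bs G}$.

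From the properties stipulated in the definition of a virtually free presentation, namely $G = F(X) \cdot S$ together with $F(X) \cap S = \{1\}$, the set $S$ is a system of right representatives for $F(X) \bs G$, so $\abs{F(X) \bs G} = \abs{S}$. Combining with the previous step gives $\abs{H} \leq \abs{S}$.

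The final assertion $\abs{H} \leq \Abs{\cvF}$ is immediate from the very definition $\Abs{\cvF} = \abs{S}(2\abs{X} + 2\abs{S}) \cdot \max\set{\abs{x_{r,a}}+1}{a \in X \cup \ov X \cup S,\, r \in S}$, since every factor in this product is at least $1$ and one of them is $\abs{S}$. There is no real obstacle in this proof; the only subtlety is recognizing that the hypotheses baked into the definition of a virtually free presentation guarantee $F(X)$ is truly free (and hence torsion-free), which is precisely the point where the argument would fail for an arbitrary overgroup of a finite-index subgroup.
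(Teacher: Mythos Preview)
Your proof is correct and follows essentially the same approach as the paper. The paper phrases it as $G$ acting on $F(X)\bs G$ from the right with point stabilizers being conjugates of $F(X)$, so that $H$ acts freely; your version simply looks at the orbit of the identity coset via the map $h\mapsto F(X)h$, which is the same argument with the conjugation suppressed (since the stabilizer of $F(X)\cdot 1$ is $F(X)$ itself).
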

	\begin{proof}
		The group $G$ acts on the quotient $F(X) \bs G$ from the right with stabilizers being conjugates of $F(X)$. Since $H$ is finite, it has trivial intersection with any conjugate of $F(X)$ and so it acts freely on $F(X) \bs G$. Thus, there is an injective map  $H \to F(X) \bs G$. Since $S$ is a system of representatives of $F(X) \bs G$, it follows $\abs{H} \leq \abs{S} \leq \Abs{\cvF}$.
	\end{proof}
	
	\begin{lemma}\label{lem:gog_size_outer}
		Let $G$ be the group defined by a virtually free presentation $\cvF$. Then the number of edges of a reduced graph of groups for $G$ with finite vertex groups is at most $\Abs{\cvF}$.	
	\end{lemma}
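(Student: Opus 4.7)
The plan is to compare $G$ with its free subgroup $F := F(X)$ of index $|S|$ via the Bass-Serre tree $T$ of any reduced graph of groups $\cG$ over $Y$ with $\pi_1(\cG) \cong G$ and all vertex groups finite, and then deduce a bound on the number $E$ of undirected edges of $Y$ from the Euler characteristic of $F \bs T$ together with reducedness.

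First I would check that $F$ acts freely and without edge inversion on $T$. Since $F$ is torsion-free while all $G$-stabilizers of vertices and edges are finite, $F \cap G_v = 1$ for every vertex or edge $v$, and an inversion $f\wt e = \ov{\wt e}$ would force $f^2 \in \Stab(\wt e)$, producing torsion in $F$. Hence $F \bs T$ is an ordinary connected graph and, as $T$ is simply connected, its topological fundamental group is $F$. Counting undirected edges, this gives
\[
 |E(F \bs T)| - |V(F \bs T)| + 1 \;=\; |X|.
\]
The second step is to express the two sizes through $\cG$: the vertices of $T$ above $P \in V(Y)$ form the $G$-set $G/G_P$, and the $F$-orbits are the double cosets $F \bs G / G_P$, which under $S = F \bs G$ become $S/G_P$ with $G_P$ acting from the right. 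The key observation (already underlying \prettyref{lem:finite_subgroups_outer_action}) is that every finite subgroup $H \leq G$ acts freely on $S$ since the stabilizer $H \cap s^{-1}Fs$ is trivial as $s^{-1}Fs$ is torsion-free. Therefore $|S/G_P| = |S|/|G_P|$, and analogously $|E(F\bs T)| = \sum_{\{y,\ov y\}} |S|/|G_y|$, yielding
\[
 \sum_{\{y,\ov y\}} \frac{|S|}{|G_y|} \;-\; \sum_P \frac{|S|}{|G_P|} \;=\; |X| - 1.
\]

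Then I would exploit reducedness. For a non-loop edge $y$, $G_y^y$ is a proper subgroup of $G_{s(y)}$ and $G_y^{\ov y}$ a proper subgroup of $G_{t(y)}$, so $2|G_y| \leq \min(|G_{s(y)}|,|G_{t(y)}|)$; averaging the two resulting inequalities $|S|/|G_y| \geq 2|S|/|G_{s(y)}|$ and $|S|/|G_y| \geq 2|S|/|G_{t(y)}|$ gives $|S|/|G_y| \geq |S|/|G_{s(y)}| + |S|/|G_{t(y)}|$. For a loop $y$ at $P$ the trivial bound $|S|/|G_y| \geq |S|/|G_P|$ already suffices. Regrouping by vertex, with $a(P)$ counting the non-loop undirected edges and $\ell(P)$ the loops incident to $P$, the above identity becomes
\[
 \sum_P \frac{|S|}{|G_P|}\bigl(a(P)+\ell(P)-1\bigr) \;\leq\; |X|-1.
\]
For connected $Y$ with $|V(Y)| \geq 2$ every vertex has degree $\geq 1$, so each summand on the left is non-negative, and $|S|/|G_P| \geq 1$. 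Using that the subgraph of non-loop edges is still connected and hence contains at least $|V(Y)|-1$ edges, a short combinatorial check yields $\sum_P(a(P)+\ell(P)-1) \geq E-1$, whence $E \leq |X|$. The degenerate case $|V(Y)| = 1$ (only loops) follows directly from the same inequality. Since $\Abs{\cvF} \geq |S|(2|X|+2|S|) \geq |X|$, this proves the claim.

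The main obstacle I anticipate is setting up the right action of finite subgroups on $S$ and verifying it is free, since this is what converts the $F$-orbit counts on $T$ into the clean formulas $|S|/|G_P|$ and $|S|/|G_y|$. Once this is in place, the remaining argument is essentially a combinatorial computation with the Euler characteristic constrained by reducedness.
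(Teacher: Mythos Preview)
Your argument is correct, but it takes a genuinely different route from the paper. The paper simply invokes Linnell's inequality \cite{Linnell83} as a black box, $2(|X|+|S|) \geq 1 + \sum_{\{y,\ov y\}} 1/|G_y|$, and combines it with the bound $|G_y| \leq |S|$ from \prettyref{lem:finite_subgroups_outer_action} to get $|E| \leq 2(|X|+|S|)\cdot|S| \leq \Abs{\cvF}$. You instead derive the exact Euler-characteristic identity $\sum_{\{y,\ov y\}} |S|/|G_y| - \sum_P |S|/|G_P| = |X|-1$ directly from the free action of $F(X)$ on the Bass-Serre tree, and then squeeze out $|E| \leq |X|$ using reducedness and the fact that finite subgroups act freely on $S$. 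This is more self-contained (no external citation needed) and in fact yields a sharper bound: $|X|$ rather than $2(|X|+|S|)|S|$. The paper's approach is shorter because Linnell has done the work; yours essentially reproves the relevant special case of Linnell's result with the additional leverage of reducedness built in.
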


\begin{proof}
	The proof is analogous to \cite[Thm.\ 1.4]{sen96dimacs}: by \cite[Thm.\ 2]{Linnell83}, we have 
	\[2(X + S) \geq 1 + \sum_{e \in E}\frac{1}{\abs{G_e}}\]
	where $E$ are the undirected edges of the graph of groups and $G_e$ the respective edge groups (note that here every undirected edge $\oneset{e,\ov e}$ is counted only once). Since all edge groups are finite, we obtain
	\begin{align*}
	\abs{E} &\leq 2(X + S) \cdot \max_{e\in E}\abs{G_e}\\
	&\leq 2(X + S) \cdot \abs{S} \tag{by \prettyref{lem:finite_subgroups_outer_action}}\\
	&\leq \Abs{\cvF}.\qedhere
	\end{align*}
\end{proof}

	\begin{lemma}\label{lem:cut_size_outer}
		Let $G$ be the group defined by a virtually free presentation $\cvF$ and let $\Gamma$ be its Cayley graph. Then every minimal cut in $\Gamma$ is a $K$-cut for $K = \Abs{\cvF}^2$.
	\end{lemma}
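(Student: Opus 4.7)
The plan is to exhibit, for every bi-infinite simple path $\alpha$ with $\cC(\alpha) \neq \emptyset$, an explicit cut $C' \in \cC(\alpha)$ whose weight is polynomial in $\Abs{\cvF}$; minimality of any $C \in \cC_{\min}(\alpha)$ then yields $\abs{\vecdel C} \leq \abs{\vecdel C'} \leq \Abs{\cvF}^2$.

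The construction will use the normal form. Let $T$ denote the Cayley tree of $F(X)$ on generators $X \cup \ov{X}$, and define a projection $\phi\colon G \to V(T)$ by $g = xs \mapsto x$. For any $a \in \Sigma$ and $g = xs$ one has $ga = x \cdot x_{s,a} \cdot s_{s,a}$, so $\phi(g) = x$ and $\phi(ga) = x \cdot x_{s,a}$ lie at distance at most $L := \max \abs{x_{r,a}}$ in $T$, whence $\phi$ is $L$-Lipschitz. Because $\phi$ restricts to the identity on $F(X)$ and $d_\Gamma(g, \phi(g)) \leq 1$ for every $g \in G$, the map $\phi$ is a quasi-isometry between $\Gamma$ and $T$ and therefore induces a bijection between their spaces of ends.

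The next step is to argue that the two tails of $\alpha$ project to distinct ends of $T$. Since $\cC(\alpha) \neq \emptyset$, some finite vertex set separates the two tails into distinct components of $\Gamma$, so the tails converge to distinct ends of $\Gamma$; via $\phi$, their projections converge to distinct ends of $T$. Hence there is an edge $e$ of $T$ whose removal yields components $T_1, T_2$ such that, writing $\alpha = (v_i)_{i\in\Z}$, one has $\phi(v_i) \in T_1$ for all sufficiently negative $i$ and $\phi(v_i) \in T_2$ for all sufficiently positive $i$. I set $C' := \phi^{-1}(V(T_1))$. Both $C'$ and $\Comp{C'}$ are connected, since any $xs, x's' \in C'$ can be joined via the $\ov{s}$- and $s'$-edges to $x, x' \in F(X)$ and then by the geodesic from $x$ to $x'$ in $T$, which stays in the subtree $T_1$; both sides are nonempty, and $C'$ splits $\alpha$ by choice of $e$.

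It remains to bound the weight. Each directed edge $(xs, a) \in \vecdel C'$ corresponds to a walk from $x$ to $x \cdot x_{s,a}$ in $T$ beginning in $T_1$ and ending in $T_2$, so after free reduction the word $x_{s,a} = y_1 \cdots y_\ell$ gives a geodesic in $T$ crossing $e$ exactly once, at some step $i \leq \ell \leq L$; the prefix $y_1 \cdots y_{i-1}$, together with the fixed $T_1$-endpoint of $e$, determines $x$ uniquely. Hence there are at most $L$ crossing edges per pair $(s,a) \in S \times \Sigma$, yielding
\[\abs{\vecdel C'} \;\leq\; \abs{S}\cdot\abs{\Sigma}\cdot L \;\leq\; \Abs{\cvF} \;\leq\; \Abs{\cvF}^2.\]
The main obstacle is the existence of the splitting tree-edge $e$, and the key fact making this possible is the bijection of ends induced by the quasi-isometry $\phi$; once $e$ is in hand, the counting is a tree-based argument linear in the data of the presentation.
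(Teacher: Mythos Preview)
Your proof is correct and takes essentially the same route as the paper: both use the cuts $\phi^{-1}(\text{half-tree of }T)$ via the normal-form projection $\phi\colon g=xs\mapsto x$, and your quasi-isometry/ends argument spells out what the paper asserts in one sentence (``every bi-infinite simple path which can be split by any cut also can be split by some $C_x$''). Your edge count is in fact sharper than the paper's---since the freely reduced $y_1\cdots y_\ell$ labels a genuine geodesic in $T$ (no backtracking, as $y_{j+1}\neq \ov{y_j}$) crossing $e$ exactly once, you obtain $\abs{\vecdel C'}\leq \abs{S}\cdot\abs{\Sigma}\cdot L\leq \Abs{\cvF}$, whereas the paper's parametrisation via the cancellation suffix $x'_{s,a}$ gives only $\abs{S}\cdot\abs{\Sigma}\cdot L^2\leq \Abs{\cvF}^2$.
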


\begin{proof}
	Consider the set of normal forms: every group element $g\in G$ can be written as $x_gs_g$ for a unique freely reduced word $x_g \in (X \cup \ov X)^*$ and $s_g \in S$.  Given some freely reduced $x \in (X \cup \ov X)^*$, it defines a cut $C_x  = \set{ys}{s \in S,\, x \text{ is a prefix of } y}$ (clearly $C_x$ and $\Comp C_x$ are connected~-- below we prove that $\vecdel C_x$ is finite). Notice that every bi-infinite simple path in $\Gamma$ which can be split by any cut also can be split by some cut of the form $C_x$.
	Hence, it remains to bound $\vecdel C_x$.

	Consider some 
	$(yr,\ov a) \in \vecdel C_x $, \ie $yr \in C_x$ and $yr\ov a =_G zs \in \Comp C_x$ for $a \in X\cup \ov X \cup S \cup \ov S$, $y,z \in (X \cup \ov X)^*$ and $r,s \in S$.  Thus, we have $z x_{s,a} =_{F(X)} y$. Since $z$, $x_{s,a}$ and $y$ are freely reduced, there are freely reduced $v, z',x'_{s,a} \in (X \cup \ov X)^*$ such that $z=z'v$ and $ x_{s,a} = \ov v  x'_{s,a}$ and $y = z'x'_{s,a}$ as words.
	Since $x$ is a prefix of $y$ but not of $z$, it follows that $x$ is a prefix of $z'x'_{s,a}$ but not of $z'$. 
	
	Hence, if we fix $x'_{s,a}$, then there are at most $\abs{x'_{s,a}}$ many possibilities for $z'$ (namely the prefixes of $z'$ of length $\abs{z'} - i$ for $i = 1, \dots, \abs{x'_{s,a}}$). 
	Moreover, if we fix only $a$ and $s$, then $x'_{s,a}$ can be any suffix of $x_{s,a}$. Thus, there are at most $\sum_{i=1}^{\abs{x_{s,a}}}i \leq \abs{x_{s,a}}^2$ many possibilities for $z'$ if $a$ and $s$ are known.  Knowing $x'_{s,a}$, $x_{s,a}$, and $z'$ determines also $z$ and $y$ completely.
	Thus, summing up over all different $a$ and $s$, we obtain
	\begin{align*}
	\vert \vecdel C_x \vert &\leq \sum_{s \in S} \sum_{a \in  X\cup \ov X \cup S \cup \ov S}\abs{x_{s,a}}^2
	\leq \abs{S} \cdot\abs{X\cup \ov X \cup S \cup \ov S} \cdot \!\!\!\!\max_{s \in S \atop a \in  X\cup \ov X \cup S \cup \ov S} \!\abs{x_{s,a}}^2
	\leq \Abs{\cvF}^2.\qedhere
	\end{align*}
\end{proof}

	\section{Main results: computing graphs of groups}\label{sec:main}
	
	\begin{lemma}\label{lem:rational_membership}
		The uniform rational subset membership problem for virtually free groups given as virtually free presentation or as context-free grammar for the word problem can be decided in polynomial time. More precisely, the input is given as
		\begin{itemize}
			\item either a virtually free presentation $\cvF$ or a context-free grammar $\Gram = (V,\Sigma, P,S)$ for the word problem of a group $G$,
			\item a rational subset of $G$ given as non-deterministic finite automaton or regular expression over $\Sigma$ (in the case of a virtually free presentation $\Sigma$ is defined as in \prettyref{sec:outer}),
			\item a word $w \in \Sigma^*$.
		\end{itemize}
		The question is whether $w$ is contained in the rational subset of $G$.
	\end{lemma}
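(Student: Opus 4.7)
The plan is to reduce the problem to checking emptiness of the intersection of a context-free language with a regular language, which is well-known to be decidable in polynomial time by a standard product construction. The key observation is that $w$ represents an element of the rational subset $R \subseteq G$ if and only if there exists a word $v$ in the language of the input automaton with $v =_G w$, equivalently $v\cdot w^{-1} \in \WP{G}$, where $w^{-1}$ denotes the formal inverse word (reverse of $w$ with every letter replaced by its inverse in $\Sigma$). Hence $w \in_G R$ if and only if $R \cdot w^{-1} \cap \WP{G} \neq \emptyset$, where $R \cdot w^{-1}$ is considered as a subset of $\Sigma^*$.

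First I would produce, in polynomial time, a context-free grammar $\Gram'$ for $\WP{G}$. If the input is already a context-free grammar for $\WP{G}$, there is nothing to do. If the input is a virtually free presentation $\cvF$, then by \prref{lem:cfg_outer} a deterministic pushdown automaton for $\WP{G}$ can be computed in polynomial time, and the standard triple construction converts a PDA into an equivalent CFG of polynomially bounded size. In parallel, I would build an NFA $\cA$ for the language $R \cdot w^{-1}$: a regular expression for $R$ is converted to an NFA in polynomial time by the usual Thompson-style construction, and concatenating with the path automaton for $w^{-1}$ (of size $|w|+1$) yields an NFA of polynomial size.

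Next I would apply the classical Bar-Hillel product construction to $\Gram'$ (which can be first put in Chomsky normal form in polynomial time) and $\cA$. This yields a context-free grammar $\Gram''$ for $L(\cA) \cap \WP{G}$ whose size is polynomial in $\Abs{\Gram'}$ and the number of states of $\cA$: its variables are triples $[q,A,q']$ for states $q,q'$ of $\cA$ and variables $A$ of $\Gram'$, and productions simulate both the derivations of $\Gram'$ and the transitions of $\cA$. Finally, testing whether $L(\Gram'')$ is non-empty amounts to computing the set of productive variables, which is done in polynomial time by the usual bottom-up marking procedure.

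Combining these steps, the entire algorithm runs in polynomial time in the combined sizes of the grammar/presentation, the automaton (or regular expression) describing $R$, and the word $w$. The only mildly technical step is the polynomial-time conversion of a DPDA to a CFG in the virtually free presentation case, but this is entirely standard, so no genuine obstacle arises.
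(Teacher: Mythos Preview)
Your proposal is correct and follows essentially the same approach as the paper: both reduce to testing emptiness of the intersection of $\WP{G}$ with a regular language obtained by concatenating the automaton's language with $w^{-1}$, then use the standard closure and emptiness results for context-free languages. The only cosmetic differences are that the paper prepends $\overline{w}$ rather than appending $w^{-1}$ and works directly with pushdown automata instead of first converting to a grammar and applying the Bar-Hillel construction.
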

\begin{proof}
	For this proof we will distinguish between group elements and words. Let $p: \Sigma^* \to G$ denote the canonical projection.
	Since a regular expression can be transformed into a finite automaton with only linear overhead (see \eg \cite{HU}), we can assume that the input is given as a non-deterministic finite automaton accepting the regular language $L\sse \Sigma^*$. The question now is whether $p(w) \in p(L)$.
	
	From this automaton, we construct a new automaton for $ L' = \ov w L$
	by adding $\abs{w}$ new states. Clearly, 
	$p(w) \in p(L)$ if, and only if, $1\in p(L')$
	or with other words $p^{-1}(1) \cap L' \neq \emptyset$. 
	The latter can be tested by
	computing a pushdown-automaton for $p^{-1}(1)$~-- either with
	the standard construction from a context-free grammar \cite{HU} or
	from the virtually free presentation as in \cite{ms83}. Both
	constructions can be done in polynomial time. From this push-down
	automaton we can easily construct a new pushdown-automaton for
	$p^{-1}(1) \cap L'$, 
	which then can be checked for emptiness (for
	both constructions see \cite{HU}).
\end{proof}

\begin{proposition}\label{prop:verify_gog}
		The following problem is in \P:
		Given a virtually free group $G$ either as virtually free presentation $\cvF$ or as context-free grammar $\Gram$ for its word problem and a graph of groups $\cG$ over the graph $Y$ (with vertex groups as multiplication tables, \ie for all $g,h \in G_P$ the product $gh$ is written down explicitly) together with a homomorphism $\phi: \Delta^* \to \Sigma^*$ (where $\Delta = \bigcup_{P\in V(Y)} G_P \cup E(Y)$ and $\Sigma$ is the generating set for $G$ defined by $\cvF$ (resp.\ $\Gram$)), decide whether $\phi$ induces an isomorphism $\pi_1(\cG,T) \to G$.
\end{proposition}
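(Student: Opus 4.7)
The plan is to verify three properties: that $\phi$ induces a well-defined group homomorphism $\wt\phi: \pi_1(\cG,T) \to G$, that $\wt\phi$ is surjective, and that $\wt\phi$ is injective. Each will be reduced to polynomially many instances of either the word problem or the rational subset membership problem in $G$, both decidable in polynomial time (the former via the CYK algorithm from a grammar, or via the pushdown automaton of \prettyref{lem:cfg_outer} from a virtually free presentation; the latter by \prettyref{lem:rational_membership}).

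For well-definedness, I would check that the relations defining $\pi_1(\cG,T)$ are all mapped to identities in $G$: for each pair $g,h$ in a vertex group $G_P$, verify $\phi(g)\phi(h) =_G \phi([gh])$ using the given multiplication table; for every edge $y \in E(Y)$ and every $a \in G_y$, verify the Bass--Serre relation $\phi(\ov y\, a^y\, y) =_G \phi(a^{\ov y})$, which in the case $a = 1$ specialises to the involution check $\phi(y\ov y) =_G 1$; and for every $y \in E(T)$, verify $\phi(y) =_G 1$. Since the vertex and edge groups are of polynomial size, $Y$ has polynomially many edges, and each $\phi(a)$ has polynomial length, this amounts to polynomially many word problem instances in $G$.

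Surjectivity reduces to checking, for each generator $\sigma \in \Sigma$, whether $\sigma$ lies in $p(\phi(\Delta^*))$, where $p : \Sigma^* \to G$ is the canonical projection. The submonoid $\phi(\Delta^*) \subseteq \Sigma^*$ is generated by the finite set $\{\phi(a) : a \in \Delta\}$ and is therefore rational, so $p(\phi(\Delta^*))$ is a rational subset of $G$ and \prettyref{lem:rational_membership} applies, yielding $|\Sigma|$ polynomial-time queries.

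Injectivity is the most delicate step, and I would translate it into a single rational subset query via the normal form theorem. Fix a base vertex $P \in V(Y)$ and let $R_P \subseteq \Delta^*$ be the language of non-empty \emph{reduced} words (no factor $gh$ with $g,h$ in a common vertex group, no factor $\ov y\, a^y\, y$) that spell a closed path at $P$ in $Y$. A polynomial-size NFA for $R_P$ is obtained by tracking the current vertex of $Y$ together with a bounded amount of information about the most recent letters, so $\phi(R_P) \subseteq \Sigma^*$ is a rational language whose description is computable in polynomial time. Via the isomorphism $\psi$ of \prettyref{prop:twofunds}, $\wt\phi$ is injective iff the restriction of $\phi$ to $\pi_1(\cG,P) \subseteq F(\cG)$ is injective; and by \prettyref{lem:reduced_word} every non-trivial element of $\pi_1(\cG,P)$ has a non-empty representative in $R_P$. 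Hence $\wt\phi$ is injective iff $1 \notin p(\phi(R_P))$, a condition decidable in polynomial time by \prettyref{lem:rational_membership}. The main conceptual hurdle is justifying precisely this last equivalence: one has to combine \prettyref{lem:reduced_word} (reducedness characterises non-triviality in $F(\cG)$) with the identification $\pi_1(\cG,T) \cong \pi_1(\cG,P)$ to be sure that the rational subset query exactly detects failure of injectivity.
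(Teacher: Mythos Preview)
Your proposal is correct and follows essentially the same route as the paper: check the defining relations via the word problem, check surjectivity via rational subset membership on each generator, and check injectivity by testing whether some non-empty reduced closed-path word at $P$ maps to the identity. The only cosmetic difference is that for injectivity you package the test as a single rational subset membership query $1 \in p(\phi(R_P))$, whereas the paper spells out the underlying mechanism directly (intersecting the context-free language $\phi^{-1}(\WP{G})$ with the regular languages $\pi_1(\cG,P)$ and $\cR$ and testing emptiness); since \prettyref{lem:rational_membership} is proved by exactly that intersection-plus-emptiness argument, the two are the same computation.
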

\begin{proof}

	We verify that $\phi$ induces a homomorphism $\tilde \phi: \pi_1(\cG,T) \to G$ and that $\tilde \phi$ is injective and surjective.

 Testing that $\phi$ really induces a \emph{homomorphism} reduces to the word problem
	for the group $G$, which can be solved in polynomial time: for every relation $a_1 \cdots a_m=1$ of $\pi_1(\cG,T)$ test whether $\phi(a_1) \cdots \phi(a_m) = 1$ in $G$.
 	Testing that $\tilde \phi$ is \emph{surjective} reduces to polynomially many
	membership-problems for rational subsets of $G$: for all $a \in \Sigma$ test whether $a$ is contained in the rational subset $\set{\tilde\phi(g)}{g\in \Delta}^*$. By \prettyref{lem:rational_membership} this can be done in polynomial time.

		It remains to test whether $\tilde\phi$ is \emph{injective}. Let $\pi: \Delta^* \to F(\cG)$ and $F(\cG) \to \pi_1(\cG,T)$ denote the canonical projections (note that $\psi$ induces an isomorphism $\pi_1(\cG,P) \tto \pi_1(\cG,T)$). Let $\cR \sse \Delta^*$ denote the set of reduced words. With slight abuse of notation we use $\pi_1(\cG,P)$ also to denote the set of words $g_0y_1\cdots g_{n-1}y_ng_{n}\in \Delta^*$ where $y_1\cdots y_n$ is a closed path based at $P$ and $g_i\in G_{s(y_{i+1})}$  for $0\leq i < n$ and $ g_{n} \in G_{P}$.
	Testing that $\tilde\phi$ is injective amounts to test whether the language 
	\[L = \bigl(\pi^{-1}(\psi^{-1}(\tilde\phi^{-1}(1))) \cap \pi_1(\cG,P)	 \cap \cR \bigr) \setminus \{1\} \sse \Delta^*\]
	is empty because $1$ is the only reduced word in $\pi_1(\cG,P)$ representing the identity, by \prettyref{lem:reduced_word}. 

	Notice that $\pi^{-1}(\psi^{-1}(\tilde\phi^{-1}(1))) = \phi^{-1}(\WP{G})$. Since $\WP{G}$ is context-free (for virtually free presentations, see \prettyref{lem:cfg_outer}) and since context-free languages are closed under inverse homomorphism, $\varphi^{-1}(\WP{G})$ is a context-free language~-- and a pushdown automaton for it can be computed in polynomial time from the pushdown automaton for $\WP{G}$ (see \cite{HU}). 

Both $\pi_1(\cG,P)$ and $\cR$ 
are regular languages with finite automata of size polynomial in the size of 
the graph of groups (and they can be computed in polynomial time):  
	For  $ \pi_1(\cG,P) $ take as deterministic finite automaton the graph $Y$ plus one additional fail state. In state $P \in V(Y)$ one can read an element of $G_P$ and stay in $P$ or read an outgoing edge and go to its terminal vertex~-- all other transitions go to the fail state.
For $\cR$, we observe that it is equal to $\Delta^* \setminus \Delta^*\wt{R}\Delta^*$ where $\wt{R}$ is the set 
of {\em forbidden} factors:
\[\wt{R} := \set{ gh}{ \exists P \in V(Y),g,h \in G_P \setminus \{1\} } \cup \oneset{\ov{y} a ^y y \mid y \in E(Y),a \in G_y }.\]
$\cR$ is thus recognized by a deterministic finite automaton $\mathbb{A}_\cR$ that, after reading a word $u$, 
memorizes in its state the longest suffix of $u$ which is a prefix 
of $\wt{R}$ (plus a fail state if it has encountered a full member of $\wt{R}$).
Since the  set of proper prefixes of $\wt{R}$ has linear size, $\mathbb{A}_\cR$ has a
quadratic number of transitions. 

Thus, $L$ is a context-free language and we obtain a pushdown automaton for $L$, 
which can be tested for emptiness  in polynomial time 
(for both constructions see \eg\ \cite{HU}). 
\end{proof}

\begin{theorem}\label{thm:compute_gog_cfg}
	The following problem is in $\NTIME(2^{2^{\Oh(N)}})$:\\	
	\noindent\emph{Input:} a context-free grammar $\Gram = (V,\Sigma,P,S)$ in Chomsky normal form with $\Abs{\Gram} \leq N$ which generates the word problem of a group $G$,\\	
	\noindent\emph{Compute} a graph of groups with finite vertex groups and fundamental group $G$.
\end{theorem}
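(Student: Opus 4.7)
The plan is a non-deterministic guess-and-verify: I would guess a graph of groups $\cG$ together with a homomorphism $\phi$ realising the desired isomorphism, and then invoke \prettyref{prop:verify_gog} to check correctness deterministically in time polynomial in the size of the guess. The content of the proof is therefore to show that a valid pair $(\cG,\phi)$ exists whose encoding size fits into $2^{2^{\Oh(N)}}$.

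For the size bound I would first apply \prettyref{lem:triangulation_k}: since $\Gram$ is in Chomsky normal form with $\Abs{\Gram}\leq N$, the Cayley graph $\Gamma$ with respect to $\Sigma$ is $k$-triangulable for $k=2^{|P|}\leq 2^{N}$, and its degree satisfies $d\leq|\Sigma|\leq N$. Plugging this into \prettyref{prop:subgroupbound}, \prettyref{thm:1.4_dimacs96} and \prettyref{lem:diameter_mincuts} gives upper bounds $\Xi\leq d^{12k+10}$, $\Theta\leq d^{12k+11}$ and $R\leq \tfrac{3k}{2}d^{3k+2}$ on, respectively, the size of finite subgroups of $G$, the number of undirected edges of a reduced graph of groups for $G$, and the diameter of the vertex boundary of any minimal cut. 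Each of these quantities is of order $2^{2^{\Oh(N)}}$.

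Next, I would feed $R$, $\Theta$ and $\Xi$ into \prettyref{lem:gog_bound} to obtain a graph of groups $\cG$ over a graph $Y$ and an isomorphism $\phi:\pi_1(\cG,T)\to G$ with $|V(Y)|\leq \Theta+1$, $|G_P|\leq \Xi$ for every $P\in V(Y)$, and $|\phi(a)|\leq 4(R+1)(\Theta+1)^2\Xi$ for every generator $a$ of $F(\cG)$. Encoding each vertex group by its full multiplication table costs $\Oh(\Xi^2\log\Xi)$ bits and the graph $Y$ has $\Oh(\Theta)$ edges, so the complete encoding of the pair $(\cG,\phi)$ (including the edge embeddings $G_y\hookrightarrow G_{s(y)}$ and the words $\phi(a)$) has size $2^{2^{\Oh(N)}}$.

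The algorithm therefore non-deterministically writes down such a pair $(\cG,\phi)$ within this size budget and then runs the deterministic verifier of \prettyref{prop:verify_gog} on the triple $(\Gram,\cG,\phi)$. Verification runs in time polynomial in the combined input size and accepts precisely when $\phi$ induces an isomorphism $\pi_1(\cG,T)\to G$; hence every accepting branch outputs a correct graph of groups. Since a polynomial in $\Abs{\Gram}+2^{2^{\Oh(N)}}$ is again of order $2^{2^{\Oh(N)}}$, the overall non-deterministic running time lies in $\NTIME(2^{2^{\Oh(N)}})$. The real work has already been carried out in \prettyref{sec:bounds} (bounding sizes of equivalence classes of optimal cuts); the only residual point to check is that \prettyref{prop:verify_gog}, being polynomial-time in its input, still runs within the claimed time bound when applied to a doubly-exponential-size guess, which is immediate.
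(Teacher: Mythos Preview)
Your proposal is correct and follows essentially the same approach as the paper: bound the parameters $k,d,\Xi,\Theta,R$ via \prettyref{lem:triangulation_k}, \prettyref{prop:subgroupbound}, \prettyref{thm:1.4_dimacs96} and \prettyref{lem:diameter_mincuts}, apply \prettyref{lem:gog_bound} to guarantee a small witness $(\cG,\phi)$, guess it non-deterministically, and verify with \prettyref{prop:verify_gog}, noting that a polynomial of $2^{2^{\Oh(N)}}$ is again $2^{2^{\Oh(N)}}$. The paper's proof is slightly terser (it combines this theorem with \prettyref{thm:compute_gog_NP_outer} and refers to \prettyref{tab:constants} for the parameter bounds), but the logical structure is identical.
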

	Note that if $\Gram$ is not in Chomsky normal form, it can be transformed into Chomsky normal form in quadratic time. In this case the graph of groups can be computed in $\NTIME(2^{2^{\Oh(N^2)}})$.

	\begin{theorem}\label{thm:compute_gog_NP_outer}
		The following problem is in $\NP$:\\		
		\noindent\emph{Input:} a group $G$ as a virtually free presentation,\\	
		\noindent\emph{Compute} a graph of groups with finite vertex groups and fundamental group $G$.
	\end{theorem}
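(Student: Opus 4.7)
The plan is to follow the guess-and-verify paradigm already set up in the paper: nondeterministically guess a graph of groups $\cG$ together with a homomorphism $\phi : \Delta^* \to \Sigma^*$, then invoke \prref{prop:verify_gog} to check deterministically in polynomial time that $\phi$ induces an isomorphism $\pi_1(\cG,T) \to G$. The nontrivial part is to guarantee that some small enough certificate always exists, \ie that there is a graph of groups with finite vertex groups for $G$ together with an isomorphism whose total bit-length is polynomial in $\Abs{\cvF}$.

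To establish the existence of such a small certificate I would plug the bounds of \prref{sec:outer} into \prref{lem:gog_bound}. Concretely, by \prref{lem:triangulation_k_outer} the Cayley graph $\Gamma$ of $G$ is $k$-triangulable with $k = 2\Abs{\cvF}+2$, by \prref{lem:cut_size_outer} every minimal cut has weight at most $K = \Abs{\cvF}^2$, and combining these with \prref{lem:lemma6_icalp93} the diameter of the boundary of any minimal cut is bounded by $R \leq \tfrac{3k}{2}K = \Oh(\Abs{\cvF}^3)$. The size of finite subgroups is bounded by $\Xi \leq \Abs{\cvF}$ (\prref{lem:finite_subgroups_outer_action}) and the number of edges in a reduced graph of groups is bounded by $\Theta \leq \Abs{\cvF}$ (\prref{lem:gog_size_outer}). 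Substituting these into \prref{lem:gog_bound} yields a graph of groups $\cG$ over $Y$ and an isomorphism $\varphi:\pi_1(\cG,T) \to G$ with $\Card{V(Y)} = \Oh(\Abs{\cvF})$, $\Card{G_P} = \Oh(\Abs{\cvF})$ and $|\varphi(a)| = \Oh(\Abs{\cvF}^6)$ for every generator $a$. In particular, the vertex groups (represented by their full multiplication tables), the edge groups with their inclusion maps, and the words $\varphi(a)$ can all be written down using polynomially many bits in $\Abs{\cvF}$.

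The nondeterministic algorithm is then: guess a finite connected graph $Y$, a family of finite groups $G_P$ for $P \in V(Y)$ given by their multiplication tables, edge groups $G_y$ together with injective homomorphisms $G_y \to G_{s(y)}$ (written down explicitly), a spanning tree $T$ of $Y$, and finally a word $\varphi(a) \in \Sigma^*$ for every generator $a \in \Delta$, each of size within the polynomial bounds above. By the existence argument of the previous paragraph such a guess exists. It remains to deterministically verify that $\varphi$ induces an isomorphism $\pi_1(\cG,T)\to G$, which is done in polynomial time by \prref{prop:verify_gog}. This yields the desired $\NP$ algorithm.

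The main obstacle here is conceptual rather than technical: one must confirm that each of the sharpened bounds from \prref{sec:outer} really is polynomial in $\Abs{\cvF}$ and that the bound in \prref{lem:gog_bound}~\ref{edge_bound} on $|\varphi(a)|$ remains polynomial after substituting $R,\Theta,\Xi$. Everything else (guessing the certificate and verifying it) reduces to routine use of the machinery already developed in the paper, in particular the polynomial-time rational subset membership of \prref{lem:rational_membership} and the pushdown-automaton-based emptiness test inside the proof of \prref{prop:verify_gog}.
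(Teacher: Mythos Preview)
Your proposal is correct and follows essentially the same approach as the paper: guess a graph of groups and a homomorphism $\phi$ within the polynomial size bounds obtained by plugging the estimates of \prref{sec:outer} (namely $R,\Theta,\Xi$ polynomial in $\Abs{\cvF}$) into \prref{lem:gog_bound}, and then verify via \prref{prop:verify_gog}. The paper presents this jointly with the proof of \prref{thm:compute_gog_cfg} and is slightly less explicit about the polynomial degree, simply asserting $\Xi,\Theta,R \in \Oh(\Abs{\cvF}^\ell)$, whereas you have worked out the concrete exponent.
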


		\begin{table}[thb]
	\small
	\caption{\small Summary of appearing constants. The third column shows a bound in terms of the size of a context-free grammar $\Gram$ for the word problem (due to \prettyref{lem:triangulation_k}, \prettyref{lem:diameter_mincuts}, \prettyref{prop:subgroupbound}, and \prettyref{thm:1.4_dimacs96}), the fourth column shows a bound in terms of the size of a virtually free presentation $\cvF$ (due to \prettyref{lem:triangulation_k_outer}, \prettyref{lem:cut_size_outer}, \prettyref{lem:lemma6_icalp93}, \prettyref{lem:finite_subgroups_outer_action}, and \prettyref{lem:cut_size_outer}).\smallskip}\label{tab:constants}
	\begin{tabularx}{\textwidth}{l l l l}
		$N $ & size of the input &  $\Abs{\Gram}$ & $\Abs{\cvF}$  \\
		$d     $ & degree of $\Gamma$	&  $N $ & $N$\\
		$k     $ & triangulation constant	&   $ 2^{N+2} $ & $ 2N+2$\\
		$K     $ & maximal weight of a minimal cut	&  $ d^{3k+3}  $ 	&  $ N^2 $\\
		$R = \frac{3kK}{2}    $& maximal diameter of the boundary of a minimal cut	&  $ \frac{3k}{2}d^{3k+3} \quad $ 	&  $ 3(N+1)N^2 $\\	
		$\Xi $ & maximum cardinality of a finite subgroup & $ d^{12k + 10}$  & $N$   \\
		$\Theta $ & maximum number of edges in a reduced graph of groups & $ d^{12k + 11}$  & $N$   \\
	\end{tabularx}
\end{table}

	\begin{proof}[Proof of \prettyref{thm:compute_gog_NP_outer} and \prettyref{thm:compute_gog_cfg}]
	We give one proof for both theorems. The difference is only in the form the input is given. Let $R$, $\Theta$ and $\Xi$ are defined as in \prettyref{lem:gog_bound} and before and let $N$ denote the size of the input (\ie either $N=\Abs{\cvF}$ or $N=\Abs{\Gram}$).
	Consider the case of \prettyref{thm:compute_gog_cfg}~-- \ie the input is given as context-free grammar $\Gram = (V,\Sigma, P,S)$. By \prettyref{tab:constants}, we have $k \in \Oh(2^N)$ where $k$ is the triangulation constant of $\Gamma$ and $\Xi, \Theta, R \in d^{\Oh(k)} \sse 2^{2^{\Oh(N)}}$. 
	Now consider the case of \prettyref{thm:compute_gog_NP_outer}~-- \ie the input is given as a virtually free presentation $\cvF$. By \prettyref{tab:constants}, $\Xi, \Theta, R \in  \Oh(\Abs{\cvF}^\ell)$ for some $\ell \in \N$.
	Thus, by \prettyref{lem:gog_bound} there is a graph of groups $\cG$ and an isomorphism $\phi:\pi_1(\cG,T) \to G$ such that 
	\begin{enumerate}
		\item $\Card{V(Y)} \in 2^{2^{\Oh(N)}}$\hfill (resp.\ $\Card{V(Y)} \in \Oh(\Abs{\cvF}^\ell)$)
		\item $\Card{G_P} \in 2^{2^{\Oh(N)}} $ for all $P \in V(Y)$ \hfill (resp.\ $\Card{G_P} \in \Oh(\Abs{\cvF}^\ell)$)
		\item  $|\phi(a)| \in 2^{2^{\Oh(N)}}$ for every $a \in \bigcup_{P\in V(Y)} G_P\cup E(Y)$ \hfill(resp.\ $|\phi(a)| \in \Oh(\Abs{\cvF}^\ell)$). 
	\end{enumerate}
	Let $\Delta = \bigcup_{P\in V(Y)} G_P \cup E(Y)$ denote the alphabet from \prettyref{sec:bass_serre}. 
	Our algorithm consists of two steps: first we guess the graph of groups $\cG$ and a map $\phi:\Delta \to G$ within the size bounds of \prettyref{lem:gog_bound} and second we verify that $\phi$ is indeed an isomorphism $\pi_1(\cG,T) \to G$ using \prettyref{prop:verify_gog}. \prettyref{lem:gog_bound} assures that there is a valid guess in the first step and by (i)--(iii) guessing can be done within the time bounds of \prettyref{thm:compute_gog_cfg} (resp.\ \prettyref{thm:compute_gog_NP_outer}). Notice that $2^{p(2^{\Oh(N)})}= 2^{2^{\Oh(N)}}$ for any polynomial $p$, so \prettyref{prop:verify_gog} yields the desired time bound also in the case of a grammar as input.
\end{proof}

\section{Slide moves and the isomorphism problem}\label{sec:iso}

Given two groups $G_1$ and $G_2$ one can calculate the respective graph of groups and then check with Krstic's algorithm by  (\cite{krstic89}) whether their fundamental groups are isomorphic. A closer analysis shows that this algorithm runs in polynomial space. As the description is quite involved, we follow a different approach based Forester's theory of deformation spaces \cite{Forester02,ClayF09}.

\ifArxiv
Let $\cG_1$, $\cG_2$ be the two graph of groups obtained from the presentations of $G_1$, $G_2$ (with the vertex groups given as multiplication tables). By construction, we have $\pi_1(\cG_1,P_1) \cong G_1$ and $\pi_1(\cG_2,P_2) \cong G_2$. We will test in \PSPACE (more precisely linear space) whether $\pi_1(\cG_1,P_1) \cong \pi_1(\cG_2,P_2) $.
\fi

\ifArxiv
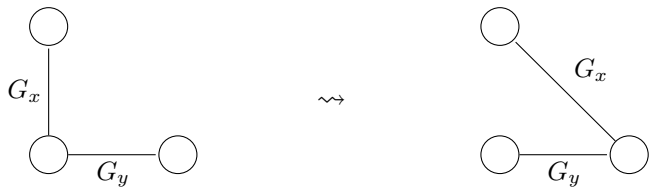
\begin{figure}
	\begin{center}\begin{minipage}{.3\textwidth}
			\begin{center}
				\small
				\begin{tikzpicture}[ >=latex,shorten > =1pt,auto,initial text={}, every state/.style={minimum size=5mm}, node distance=1.7cm]
				\node[state] (1) {};
				\node[state,below of=1] (2) {};
				\node[state,right of=2] (3) {};

				\draw[-] (2) to node[left=-1pt] {$G_x$} (1);
				\draw[-] (2) to node[below=-1pt] {$G_y$} (3);
				\end{tikzpicture}
			\end{center}			
		\end{minipage}
		\begin{minipage}{.1\textwidth}
			\begin{center}
				$\leadsto$
			\end{center}	
		\end{minipage}
		\begin{minipage}{.3\textwidth}
			\begin{center}
				\small
				\begin{tikzpicture}[ >=latex,shorten > =1pt,auto,initial text={}, every state/.style={minimum size=5mm}, node distance=1.7cm]
				\node[state] (1) {};
				\node[state,below of=1] (2) {};
				\node[state,right of=2] (3) {};

				\draw[-] (3) to node[above right ] {$G_x$} (1);
				\draw[-] (2) to node[below=-1pt] {$G_y$} (3);
				\end{tikzpicture}
			\end{center}
		\end{minipage}
	\end{center}
	\caption{The slide move is possible if there is some $g \in G_P$ such that $g^{-1} G_x^x g \leq G_y^y$.}\label{fig:slide_move}
\end{figure}
\fi

Let $\cG$ be a graph of groups over $Y$. A \emph{slide move} is the following operation on $\cG$: let $G_P$ be a vertex group and $G_x$, $G_y$ edge groups with $s(x) = s(y) = P$. If  $G_x^x$ (the image of $G_x$ in $G_P$) can be conjugated by an element of $G_P$ into $G_y^y$ \ie there is some $g \in G_P$ such that $g^{-1} G_x^x g \leq G_y^y$, then $x$ can be slid along $y$ to $Q = t(y)$, \ie $s(x)$ is changed to $Q$\ifArxiv, see \prettyref{fig:slide_move}\fi. The new inclusion of $G_x \to G_Q$ is then given by $ \iota_{\ov y} \circ \iota_y^{-1 }\circ c_g\circ\iota_x$ where $\iota_x$ is the inclusion $G_x \to G_P$ (likewise for $\iota_{\ov y}, \iota_y$) and $c_g$ is the conjugation with $g$ (\ie $h \mapsto g^{-1} h g$). 
A slide move induces an isomorphism $\phi$ of the fundamental groups of the two graph of groups by $\phi(h) = h$ for $h \in G_R$, $R\in V(Y)$, and $\phi(z)= z$ for $z \in E(Y) \setminus \oneset{x, \ov x}$ and $\phi(x) = gyx$.

\ifArxiv Note that slide moves originally were defined on the trees instead of graph of groups. On trees there is no need for a conjugation before sliding an edge~-- the conjugation corresponds simply to choosing a different representative for an edge. In \cite{Bass93} it is shown that two graph of groups have the same Bass-Serre tree if and only if they are equal up to conjugation of the inclusions of the edge groups into the vertex groups (this is also not very hard to see when following the construction of a graph of groups from an action on a tree in \prettyref{sec:GX}). \fi

The following result is an immediate consequence of \cite[Thm.~1.1]{Forester02} and \cite[Cor.~3.5]{ClayF09} (resp.\ \cite[Thm.~7.2]{GuirardelL07}). 
\ifArxiv
	Since we are not aware of an explicit reference, we present the proof.
 \else
	Since we are not aware of an explicit reference, we present the details in \cite{SenizerguesW18arxiv}.
\fi
\begin{proposition}\label{prop:forester}
	Let $\cG_1$ and $\cG_2$ be reduced finite graph of groups with finite vertex group. Then $\pi_1(\cG_1,P_1) \cong \pi_1(\cG_2,P_2) $ if and only if $\cG_1$ can be transformed into $\cG_2$ by a sequence of slide moves.
\end{proposition}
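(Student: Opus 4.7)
The ``if'' direction was already observed above the statement: a single slide move induces a canonical isomorphism of fundamental groups, so any finite composition of slides does as well. It remains to treat the ``only if'' direction, for which the plan is to translate everything into the language of $G$-trees and then quote Forester's deformation space machinery.

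First I would pass from the two graphs of groups $\cG_1,\cG_2$ to their Bass–Serre trees $T_1,T_2$; by Bass–Serre theory (\prettyref{thm:bst}) the fundamental groups act cocompactly on these trees, and the isomorphism $\pi_1(\cG_1,P_1)\cong\pi_1(\cG_2,P_2)=:G$ turns both into cocompact minimal $G$-trees with finite edge and vertex stabilizers. The key step is to show that $T_1$ and $T_2$ lie in the same \emph{deformation space}, \ie that they have the same set of elliptic subgroups (subgroups of $G$ fixing a vertex). For this I would argue: any finite subgroup $H\le G$ acts on a tree with finite (hence bounded) orbits and therefore fixes a vertex; conversely any elliptic subgroup is contained in a vertex stabilizer, which by assumption is finite. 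Hence in both $T_1$ and $T_2$ the collection of elliptic subgroups coincides with the family of all finite subgroups of $G$, and the two trees belong to the same deformation space.

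Since the graphs of groups are assumed reduced, the corresponding trees $T_1,T_2$ are reduced in the sense of Forester (no edge has a vertex stabilizer equal to an incident edge stabilizer when the two endpoints are inequivalent). Forester's theorem \cite[Thm.~1.1]{Forester02} then asserts that any two reduced cocompact simplicial $G$-trees in the same deformation space are connected by a finite sequence of elementary moves, and the refinement in \cite[Cor.~3.5]{ClayF09} (also contained in \cite[Thm.~7.2]{GuirardelL07}) shows that in our setting~-- where all edge groups are finite, so no collapsing-expansion through a proper inclusion is forced~-- this sequence can be taken to consist entirely of slide moves. Translating slide moves on $G$-trees back to slide moves on the quotient graphs of groups (and absorbing the choice of edge representatives into conjugations by vertex-group elements, which is exactly what the definition of slide given before the proposition allows for) produces the desired sequence $\cG_1\leadsto\cdots\leadsto\cG_2$.

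The main obstacle I anticipate is the bookkeeping needed to legitimately cite Forester's and Clay--Forester's theorems in our formulation: their slide moves live on trees, whereas ours live on graphs of groups and include a conjugation by $g\in G_P$ in the new edge inclusion. Verifying that the tree-level slides descend to our graph-of-groups slides (and vice versa) requires recalling that two graphs of groups yield the same $G$-tree iff they differ by conjugations of the edge inclusions, as discussed in \cite{Bass93}. Once this dictionary is in place, the proposition follows immediately from the two cited results.
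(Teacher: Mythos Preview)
Your proof plan is correct and follows essentially the same route as the paper: pass to the Bass--Serre trees, observe that both actions have exactly the finite subgroups as elliptics (hence lie in the same deformation space), then invoke \cite[Thm.~1.1]{Forester02} together with \cite[Cor.~3.5]{ClayF09}/\cite[Thm.~7.2]{GuirardelL07}. The paper phrases the hypothesis needed for the Clay--Forester/Guirardel--Levitt refinement slightly differently~-- it checks that the deformation space is \emph{non-ascending} (no self-loop with one edge inclusion onto and the other proper, which is impossible when vertex groups are finite)~-- but this is the same content as your remark about finite edge groups.
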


\begin{proof}
	As described above, slide moves induce isomorphisms on the fundamental groups. Thus, let $G=\pi_1(\cG_1,P_1) \cong \pi_1(\cG_2,P_2)$. This gives us two different actions of $G$ on the respective Bass-Serre trees. Both actions have the same elliptic subgroups, namely all finite subgroups.
	By \cite[Thm.~1.1]{Forester02}, $\cG_1$ can be transformed into $\cG_2$ by a sequence of elementary deformations. Since the corresponding deformation space is non-ascending (meaning that there are no self-loops in the graph of groups with one inclusion of the edge group being surjective but the other not~-- this clearly cannot happen since all vertex groups are finite), by \cite[Cor.~3.5]{ClayF09} or \cite[Thm.~7.2]{GuirardelL07}, $\cG_1$ actually can be transformed into $\cG_2$ by a sequence of slide moves.
\end{proof}

Clearly, any sequence of slide moves can be performed in linear space. By guessing a sequence of slide moves transforming $\cG_1$ into $\cG_2$, we obtain the following corollary.

\begin{corollary}
	Given two graph of groups $\cG_1$ and $\cG_2$ where all vertex groups are given as full multiplication tables, it can be checked in $\NSPACE(\Oh(n))$ whether $\pi_1(\cG_1,P_1) \cong \pi_1(\cG_2,P_2) $. 
\end{corollary}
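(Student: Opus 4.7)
The plan is to apply Proposition \ref{prop:forester} directly: I would guess a sequence of slide moves transforming $\cG_1$ into a graph of groups isomorphic to $\cG_2$, applying each move on the fly and checking at every step whether we have arrived at the target. In polynomial time both $\cG_1$ and $\cG_2$ can first be brought into reduced form by iteratively collapsing any edge $y$ with $s(y)\neq t(y)$ and $G_y^y=G_{s(y)}$; this preserves the fundamental group, so we may assume both inputs are reduced. Then $\pi_1(\cG_1,P_1)\cong\pi_1(\cG_2,P_2)$ if and only if some finite sequence of slide moves takes $\cG_1$ to a graph of groups isomorphic to $\cG_2$.

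The non-deterministic algorithm maintains a single current graph of groups $\cG$, initialised as $\cG_1$. In one step it guesses an edge $x$ to slide along an edge $y$ with $s(x)=s(y)=:P$, together with a witness $g\in G_P$; it verifies $g^{-1}G_x^xg\sse G_y^y$ by consulting the multiplication table of $G_P$ and the known image $G_y^y$, and then updates the source of $x$ to $t(y)$ and the inclusion $\iota_x$ to $\iota_{\ov y}\circ\iota_y^{-1}\circ c_g\circ \iota_x$. After each move it non-deterministically tests whether the current $\cG$ is isomorphic, as a graph of groups, to $\cG_2$; this amounts to guessing a graph isomorphism of the underlying graphs together with isomorphisms of vertex groups and verifying compatibility with the inclusions. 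If the test succeeds, it accepts.

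The space bound rests on the key observation that slide moves do not alter the family of vertex or edge groups as abstract groups: only the source of a single edge and the corresponding inclusion map are modified. Every intermediate graph of groups thus uses the same vertex and edge groups as the input, and the mutable data (edge sources and inclusion maps $\iota_y$, encoded by listing images of elements of $G_y$) occupies $\Oh(n)$ bits in total. Validity checks for slide moves, inclusion updates, and the final isomorphism comparison with $\cG_2$ are therefore all standard $\Oh(n)$-space tasks. The main subtlety will be that Proposition \ref{prop:forester} only guarantees reachability up to isomorphism of graphs of groups and not up to literal equality, which is why the comparison with $\cG_2$ is itself handled non-deterministically; termination of rejecting runs can be enforced by appending an $\Oh(n)$-bit counter to the state, since the number of distinct configurations reachable by slide moves is bounded by $2^{\Oh(n)}$.
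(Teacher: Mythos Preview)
Your proposal is correct and follows essentially the same approach as the paper: guess a sequence of slide moves and check after each whether the target has been reached, observing that each intermediate graph of groups fits in linear space. You are in fact more careful than the paper's one-line argument, explicitly handling the preliminary reduction step required by Proposition~\ref{prop:forester}, the need to compare with $\cG_2$ up to isomorphism of graphs of groups rather than literal equality, and termination via a configuration counter.
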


In combination with \prettyref{thm:compute_gog_cfg} (and Savitch's theorem) and \prettyref{thm:compute_gog_NP_outer} this gives an algorithm to solve the isomorphism problem for virtually free groups:
\begin{theorem}\label{thm:iso_NEXP2_cf}
	The isomorphism problem for context-free groups is in $\DSPACE(2^{2^{\Oh(N)}})$. More precisely, the input is given as two context-free grammars of size at most $N$ which are guaranteed to generate word problems of groups. 
\end{theorem}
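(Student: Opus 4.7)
The plan is to combine the non-deterministic construction of a graph of groups from \prettyref{thm:compute_gog_cfg} with the slide-move based isomorphism test from the preceding corollary, and then to absorb the non-determinism into deterministic space via Savitch's theorem and the standard $\NTIME(f)\subseteq\DSPACE(f)$ simulation.

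Given two context-free grammars $\Gram_1,\Gram_2$ of size at most $N$ defining groups $G_1,G_2$, I would first normalize each to Chomsky normal form (at a quadratic cost absorbed into the outer exponential) and then invoke \prettyref{thm:compute_gog_cfg} to produce reduced graphs of groups $\cG_1,\cG_2$ with finite vertex groups and $\pi_1(\cG_i)\cong G_i$. By \prettyref{lem:gog_bound} and \prettyref{tab:constants}, the number of vertices, the size of each vertex group's multiplication table, and the length of each guessed witness is bounded by $2^{2^{\Oh(N)}}$, so an accepting non-deterministic computation can be described in that much space.

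Second, after both graphs of groups are written down, I would apply the slide-move procedure. By \prettyref{prop:forester} we have $\pi_1(\cG_1,P_1)\cong\pi_1(\cG_2,P_2)$ iff $\cG_1$ can be turned into $\cG_2$ by a sequence of slide moves, and the subsequent corollary gives an $\NSPACE(\Oh(n))$ procedure for this, where $n$ is the size of $\cG_1,\cG_2$. Since slide moves do not change the underlying graph or the vertex groups but only the attaching maps of edge groups, every intermediate graph of groups fits into the same space as the endpoints, so the $\NSPACE$ bound is genuine. Plugging in $n\in 2^{2^{\Oh(N)}}$ and applying Savitch's theorem yields $\DSPACE((2^{2^{\Oh(N)}})^2)=\DSPACE(2^{2^{\Oh(N)}})$ for the slide-move check.

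Finally, I would deterministically enumerate over all non-deterministic guesses used to produce $\cG_1$ and $\cG_2$, accepting iff some pair of accepting guesses yields graphs of groups equivalent under slide moves. The standard inclusion $\NTIME(f)\subseteq\DSPACE(f)$ keeps each such enumeration within $\DSPACE(2^{2^{\Oh(N)}})$, which dominates the slide-move subroutine. Correctness follows because every accepting computation of \prettyref{thm:compute_gog_cfg} returns a genuine graph of groups for the respective input and \prettyref{prop:forester} guarantees that any two such graphs for isomorphic groups are slide-equivalent. The only conceptual obstacle is verifying that the slide-move procedure truly runs in space linear in the description of the $\cG_i$ once those descriptions are doubly exponential, but since slides are local moves that never enlarge the graph of groups, this reduces to careful bookkeeping rather than a new combinatorial argument.
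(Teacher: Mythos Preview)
Your proposal is correct and follows essentially the same route as the paper, which simply states that \prettyref{thm:iso_NEXP2_cf} follows by combining \prettyref{thm:compute_gog_cfg} with the slide-move corollary and Savitch's theorem. You have merely spelled out the bookkeeping (the $\NTIME(f)\subseteq\DSPACE(f)$ enumeration over guesses and the observation that slide moves preserve the size of the description) that the paper leaves implicit; the one small point to make explicit is that the guessed graphs of groups should be reduced before applying \prettyref{prop:forester}, but this reduction is a trivial polynomial-time pass on the already doubly-exponential object.
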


\begin{theorem}\label{thm:iso_PSPACE_outer}
	The isomorphism problem for virtually free groups given as a virtually free presentation is in \PSPACE.
\end{theorem}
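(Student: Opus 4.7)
The plan is to combine Theorem \ref{thm:compute_gog_NP_outer} with Proposition \ref{prop:forester} and the corollary on checking slide-equivalence in linear nondeterministic space, then invoke Savitch's theorem.

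First, given the two virtually free presentations $\cvF_1$ and $\cvF_2$ for groups $G_1$ and $G_2$ as input, I would nondeterministically guess a graph of groups $\cG_i$ over a graph $Y_i$ together with a homomorphism $\phi_i$ witnessing the isomorphism $\pi_1(\cG_i,T_i)\cong G_i$, for $i\in\{1,2\}$. By Theorem \ref{thm:compute_gog_NP_outer} (and the bounds collected in \prettyref{tab:constants} together with \prettyref{lem:gog_bound}), for each $i$ there exists such a pair $(\cG_i,\phi_i)$ of size polynomial in $\Abs{\cvF_i}$, and its correctness can be verified in deterministic polynomial time by \prettyref{prop:verify_gog}. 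Any wrong guess is rejected, so along each accepting branch we hold in memory genuine reduced graphs of groups with finite vertex groups (here we may additionally collapse non-reduced edges as in the discussion before \prettyref{lem:reduced_complete}, still in polynomial space) whose fundamental groups are $G_1$ and $G_2$.

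Next, once $\cG_1$ and $\cG_2$ are on the tape, I would apply \prettyref{prop:forester}: we have $G_1\cong G_2$ if and only if $\cG_1$ can be transformed into $\cG_2$ by a sequence of slide moves. By the corollary following \prettyref{prop:forester}, the existence of such a sequence can be decided in $\NSPACE(\Oh(n))$ where $n$ is the size of the (multiplication-table encoded) graphs of groups. Since $n$ is polynomial in $\Abs{\cvF_1}+\Abs{\cvF_2}$, this is a polynomial-space nondeterministic computation in the original input size.

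Putting everything together, the overall procedure runs in nondeterministic polynomial space: guess $(\cG_1,\phi_1)$ and $(\cG_2,\phi_2)$ of polynomial size, verify each in \P, and then nondeterministically search for a slide sequence transforming $\cG_1$ into $\cG_2$. By Savitch's theorem, $\mathsf{NPSPACE}=\PSPACE$, which gives the desired bound. The only delicate point is bookkeeping: one must ensure that the guessed objects (in particular the full multiplication tables of the finite vertex groups, whose sizes are bounded by $\abs{S}\leq\Abs{\cvF_i}$ thanks to \prettyref{lem:finite_subgroups_outer_action}) remain polynomial, which is exactly what the explicit bounds in \prettyref{tab:constants} and \prettyref{lem:gog_bound} guarantee in the virtually-free-presentation setting; this is what makes \PSPACE, rather than merely \EXPSPACE, attainable here.
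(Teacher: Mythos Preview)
Your proposal is correct and follows essentially the same route as the paper: compute polynomial-size graphs of groups for $G_1$ and $G_2$ via \prettyref{thm:compute_gog_NP_outer}, then test slide-equivalence in nondeterministic linear space using the corollary after \prettyref{prop:forester}, and conclude with Savitch. The paper's own argument is the terse sentence preceding the theorem statement; you have simply unpacked it, and your explicit attention to reducing the guessed graphs of groups (needed for \prettyref{prop:forester}) and to the polynomial size of the multiplication tables is a welcome clarification rather than a deviation.
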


\vspace{-3mm}
\section{Conclusion and open questions}\label{sec:conclusion}
We have shown that the isomorphism problem for virtually free groups is in \PSPACE (resp.\ $\DSPACE(2^{2^{\Oh(N)}})$) depending on the type of input~-- thus, improving the previous bound (primitive recursive) significantly. The following questions remain open:				
	\begin{itemize}
		\item What is the complexity of the isomorphism problem for virtually free groups given as an arbitrary presentation?
		\item Is the doubly exponential bound $n^{12\cdot 2^n + 10}$ on the size of finite subgroups tight or is there a bound $2^{p(n)}$ for some polynomial $p$? This is closely related to another question: 
		\item What is the minimal size of a context-free grammar of the word problem of a finite group? Can it be $\log \log (n)$ where $n$ is the size of the group?
		\item Is there a polynomial bound on the number of slide moves necessary to transform two graphs of groups with isomorphic fundamental groups into each? This would lead to an \NP algorithm for the isomorphism problem with virtually free presentations as input. We conjecture, however, that this is not true.
	\end{itemize}	

\vspace{-3mm}
\paragraph*{Acknowledgements.}
G.S. thanks the FMI for hosting him from October to the end of the year 2017.
Both authors acknowledge the financial support by the DFG project DI 435/7-1 ``Algorithmic problems in group theory'' for this work.




\newcommand{\Ju}{Ju}\newcommand{\Ph}{Ph}\newcommand{\Th}{Th}\newcommand{\Ch}{Ch}\newcommand{\Yu}{Yu}\newcommand{\Zh}{Zh}\newcommand{\St}{St}\newcommand{\curlybraces}[1]{\{#1\}}

%

\end{document}